\newcommand{\CC}{\mathbb{C}}
\newcommand{\ZZ}{\mathbb{Z}}
\newcommand{\NN}{\mathbb{N}}
\newcommand{\RR}{\mathbb{R}}
\newcommand{\Ss}{\mathbb{S}}
\newcommand{\calR}{\mathcal{R}}
\newcommand{\calC}{\mathcal{C}}
\newcommand{\calO}{\mathcal{O}}
\newcommand{\calM}{\mathcal{M}}
\newcommand{\calB}{\mathcal{B}}
\newcommand{\calV}{\mathcal{V}}
\newcommand{\calA}{\mathcal{A}}
\newcommand{\calH}{\mathcal{H}}
\newcommand{\calP}{\mathcal{P}}
\newcommand{\calQ}{\mathcal{Q}}
\newcommand{\bfc}{\mathbf{c}}
\newcommand{\bfb}{\mathbf{b}}
\newcommand{\scrA}{{\mathscr{A}}}
\newcommand{\scrC}{{\mathscr{C}}}
\newcommand{\scrS}{{\mathscr{S}}}
\newcommand{\scrX}{{\mathscr{X}}}
\newcommand{\frakA}{\mathfrak{a}}
\newcommand{\bfPhi}{\mathbf{\Phi}}
\newcommand{\Aut}{\operatorname{Aut}}
\newcommand{\res}{\operatorname{res}}
\newcommand{\pl}[1]{\foreignlanguage{polish}{#1}}
\newcommand{\fr}[1]{\foreignlanguage{french}{#1}}
\DeclarePairedDelimiter{\norm}{\lvert}{\rvert}
\DeclarePairedDelimiter{\abs}{\lvert}{\rvert}
\newcommand{\sprod}[2]{\langle {#1}, {#2} \rangle}
\newcommand{\cspan}{\operatorname{\mathbb{C}-span}}
\newcommand{\zspan}{\operatorname{\mathbb{Z}-span}}
\newcommand{\GL}{\text{GL}}
\newcommand{\Aff}{\text{Aff}}
\newcommand{\proj}{\operatorname{proj}}
\newcommand{\St}{\operatorname{St}}
\newcommand{\cat}{\operatorname{CAT}(0)}
\newcommand{\ind}[1]{{\mathds{1}_{{#1}}}}
\newcommand{\conv}{\operatorname{conv}}
\newcommand{\supp}{\operatorname{supp}}
\newcommand{\igerm}{{\operatorname{germ}}_\infty}
\newcommand{\clr}[2][3]{{}\mkern#1mu\overline{\mkern-#1mu#2}}
\newcommand{\vphi}{\varphi}
\newtheorem{theorem}{Theorem}[section]
\newtheorem{proposition}[theorem]{Proposition}
\newtheorem{lemma}[theorem]{Lemma}
\newtheorem{corollary}[theorem]{Corollary}
\newtheorem{claim}[theorem]{Claim}
\newtheorem*{theorem*}{Theorem}
\numberwithin{equation}{section}
\newcounter{thm}
\newtheorem{main_theorem}[thm]{Theorem}
\theoremstyle{definition}
\newtheorem{remark}{Remark}
\title[Martin compactifications]
{Martin compactifications of affine buildings}
\author{Bertrand R\'emy}
\address{
	\fr{
	Bertrand R\'emy\\
	Unit\'e de Math\'ematiques Pures et Appliquees (UMR 5669)\\
	ENS de Lyon
	}
}
\curraddr{
	\fr{
	\'Ecole normale sup\'erieure de Lyon,
	46 all\'ee d'Italie,
	69364 Lyon Cedex 07,
	France}
}
\email{bertrand.remy@ens-lyon.fr}
\author{Bartosz Trojan}
\address{
	\pl{
	Bartosz Trojan\\
	Instytut Matematyczny
	Polskiej Akademii Nauk\\
	ul. \'Sniadeckich 8\\
	00-696 Warszawa\\
	Poland}
}
\curraddr{
	\pl{
	Wydzia\l{} Matematyki\\
	Politechnika Wroc\l{}awska\\
	Wyb. Wyspia\'{n}skiego 27\\
	50-370 Wroc\l{}aw\\
	Poland}
}
\email{bartosz.trojan@pwr.edu.pl}
\thanks{The research was partial supported by the \fr{Centre de Math\'ematiques Laurent Schwartz}
and the National Science Centre, Poland, Grant 2016/23/B/ST1/01665} 
\begin{document}
\selectlanguage{english}

\begin{abstract}
	We carry out an in-depth study of Martin compactifications of affine buildings, from the viewpoint of
	potential theory and random walks. This work does not use any group action on buildings,
	although all the results are also stated within the framework of the Bruhat--Tits theory of semisimple
	groups over non-Archimedean local fields. This choice should allow the use of these building compactifications
	in intriguing geometric group theory situations, where only lattice actions are available. 
	The resulting compactified spaces use and, at the same time, make it possible to understand geometrically the descriptions of
	asymptotic behavior of kernels resulting from the non-Archimedean harmonic analysis on affine buildings. 
	Along the paper, we make explicit the most substantial differences with the case of symmetric spaces, namely absence of a 
	group action but existence of precise asymptotics of Green kernels and, of course, no possibility to stand by standard 
	techniques from PDEs. 
\end{abstract}

\maketitle

\section*{Introduction}
This paper deals with the Martin compactifications of affine buildings. In other words, it makes a connection between
two very different mathematical topics. On the one hand, affine buildings are relevant to algebra and geometry and, on the
other hand, Martin compactifications refer to analysis, more precisely potential and probability theory. Therefore,
our first task in this introduction, before mentioning the new results, is to introduce these two fields independently
but in a way making them compatible with one another. At this stage let us simply say that dealing with
compactifications associated with potential theory allows us to construct, from a concrete viewpoint, compactifications that
before this approach could only be obtained artificially. Conversely, these compactifications provide a geometric way
of understanding the various factors in the asymptotic formula for the Green function obtained previously by
Gelfand--Fourier analytic methods. 

In what follows, the geometry on which the various analytic concepts are defined (such as random walks, or heat and Martin
kernels) are affine buildings. In many situations, the latter spaces are non-Archimedean analogues of Riemannian symmetric
spaces; they were indeed designed for this purpose by F.~Bruhat and J.~Tits (see \cite{BruhatTits1972} and
\cite{BruhatTits1984} for the full theory, and \cite{Tits1977} for an overview). Affine buildings thus provide the
well-adapted geometry that enables one to understand semisimple algebraic groups over non-Archimedean local fields,
such as classical matrix groups over finite extensions of the field of $p$-adic numbers $\mathbb{Q}_p$ (e.g. the group
${\rm SL}_n(\mathbb{Q}_p)$ itself). However, some affine buildings of low rank do not come from algebraic groups, and 
thus have interesting features in geometric group theory. For this reason we avoid using group actions for the basic results 
in the paper, even though we are led by this analogy and even though we eventually provide the group-theoretic statements. 

This approach is completely parallel to the way a semisimple real Lie group $G$
without compact factor is understood, that is via its action on the associated symmetric space $X=G/K$ where $K$ is a
maximal compact subgroup (see \cite{Helgason1979} and \cite{Maubon}). As a result, geometric proofs of crucial
tools in Lie theory and in representation theory, such as the well-known Cartan and Iwasawa decompositions for
non-Archimedean semisimple Lie groups, are obtained. The main difference with Lie theory over the reals is that maximal compact
subgroups are now also open, which is consistent with the fact that affine buildings are discrete structures, namely
products of simplicial complexes. For instance, affine buildings in rank $1$, {\it i.e.} corresponding to hyperbolic
spaces, are semi-homogeneous trees.

Apart from the above well-known algebraic consequences, this led to the possibility of studying questions which are also
relevant to spherical harmonic analysis as inspired by works of Harish-Chandra. Indeed, I. Satake showed that the
non-Archimedean semisimple Lie groups can be considered in the general framework of Gelfand pairs since some suitable
Hecke algebras of bi-invariant functions were shown to be commutative for the convolution law \cite{Satake1963}. The
corresponding spherical functions were computed by I.G. Macdonald in \cite{macdo0}. This opened the way to beautiful
combinatorial problems \cite{MacdonaldSym}. It also provided a Gelfand--Fourier transform allowing both to attack more
advanced questions and to get deeper understanding of analytic objects such as heat kernels.

Among these more advanced analytic problems we naturally find the ones related to random walks and integral
representations of the corresponding harmonic functions. The pioneering work in the field is due to H. Furstenberg who
developed the crucial notion of boundaries from a probabilistic viewpoint in this Lie-theoretic context
\cite{Furstenberg1963}. The latter notion had a strong impact on many questions in group theory, in particular in rigidity
theory \cite{Margulis}, and it is likely to be still useful in many situations in geometric group theory where rich
structures from Lie theory are missing and need to be replaced by more flexible measure-theoretic ones \cite{BaderShalom}. 
The Martin compactification procedure is relevant to this context. It deals with positive harmonic functions on symmetric
spaces with respect to the Laplace--Beltrami operator, which is a bi-invariant second order differential operator on
the automorphism group. Later, it was extended to more general situations, at least at the bottom of the spectrum;
for instance to integral equations with respect to well-behaved probability measures in the terminology introduced by
Guivarc'h--Ji--Taylor \cite{gjt}. Our aim is to construct Martin compactifications for affine buildings. Note that in this
case, the absence of differential structure so to speak leads us to unify the approach via the use of averaging operators 
({\it i.e.} difference operators). Averaging operators naturally correspond to Markov chains. We also wish to cover situations
where no sufficiently transitive group action is available which is a way to include some intriguing lower-dimensional exotic
affine buildings (for existence see e.g. \cite{RonanExotic}), which is a first deviation from \cite{gjt}. 

To be more precise, let us recall that an affine building $\scrX$ is a simplicial complex covered by subcomplexes
all modeled on a given affine tiling, called apartments, see \cite[Chapter V]{Bourbaki2002}. The latter subcomplexes
are required to satisfy natural incidence axioms: any two simplices must be contained in an apartment, and given any two
apartments there must exist a simplicial isomorphism fixing their intersection (see Section \ref{sec:10} for 
definitions). These axioms are particularly well-adapted to the construction of a complete non-positively curved distance
on $\scrX$ which makes buildings a beautiful source of examples of $\cat$-spaces for geometric group theory
\cite{Bridson1999}. In this paper we try to stick as much as possible to the discrete point of view, which enables us to
use more easily many notions from probability theory. The approach to Martin compactification in the discrete setup was
described by J.L. Doob \cite{DoobDiscretePot}. We want to compactify the set of all the so-called special vertices 
\cite[Section 1.3.7]{BruhatTits1972} of a given affine building $\scrX$. However, in the case when the root system of
$\scrX$ is non-reduced, Fourier analytic tools lead us to split the set of special vertices into two subsets $V_g$ and
$V_g^\varepsilon$ where $V_g$ consists of the special vertices having the same type as the origin $o$ (we call them
good vertices, see Section \ref{sec:8} and \cite{park2}). In the reduced cases, all special vertices are
good, so to treat all the buildings in a uniform way, we prefer to use the terminology good vertices.
In any case, each maximal simplex (called an alcove) contains at least one good vertex, hence $V_g$ is sufficiently large
to provide a satisfactory compactification of~$\scrX$. 

We study an averaging operator $A$ acting on functions on good vertices that is related to the transition function
$p(x, y)$ of a finite range random walk defined on $V_g$ (see Section \ref{ss - Martin embeddings}) as follows 
\[
	A f(x) = \sum_{y \in V_g} p(x, y) f(y), \qquad x \in V_g.
\]
Finite range of the random walk guarantees that the embedding $\iota_\zeta$ defined in \eqref{iota} has discrete image. 
We also assume that the random walk is isotropic and irreducible, which are the natural conditions corresponding to
the well-behaved probability measures on symmetric spaces. Let us stress that in the building case, there is no choice of 
a specific averaging operator which would correspond to the Laplace--Beltrami operator on symmetric spaces; this explains 
why we work with this class of measures. Let $\varrho$ be the spectral radius of the operator $A$ acting on $\ell^2(V_g)$;
it can be computed in purely Lie-theoretic terms even without any group action, see formula \eqref{eq:60}.
Classically, for each $\zeta \geqslant \varrho$ we can define the $\zeta$-Green function
\[
	G_\zeta(x, y) = \sum_{n \geqslant 0} \zeta^{-n} p(n; x, y), \qquad \text{for } x, y \in V_g
\]
which leads to the Martin kernels
\[
	K_\zeta(x, y) = \frac{G_\zeta(x, y)}{G_\zeta(o, y)}.
\]
We are now in position to define the Martin embedding associated with the transition function $p$ and to the real parameter $\zeta$. 
Let us denote by $\calB_\zeta(V_g)$ the set of positive $\zeta$-superharmonic functions on $V_g$ ({\it i.e.} functions $f$ on
$V_g$ such that $Af \leqslant \zeta f$), normalized to take value $1$ at the origin $o$. The set $\calB_\zeta(V_g)$
endowed with the topology of pointwise convergence is a compact second countable Hausdorff space, thus it is metrizable. 
The corresponding Martin embedding is the map 
\begin{equation}
	\label{iota}
	\begin{alignedat}{1}
		\iota_\zeta: V_g &\longrightarrow \calB_\zeta(V_g) \\
		y &\longmapsto K_\zeta(\,\cdot\,, y) 
	\end{alignedat}
\end{equation} 
which can be shown to be injective with discrete image, and $\Aut(\scrX)$-equivariant for a suitably defined projective
action on $\calB_\zeta(V_g)$ (see formula \eqref{eq - proj action}). Such an embedding with these properties is a typical
map we will use in this paper in order to define compactifications in the sense of Section \ref{sec:11}.
The closure of the image of $\iota_\zeta$, which we denote by $\clr{\scrX}_{M, \zeta}$, is called the Martin
compactification of $\scrX$ (associated with $p$ and to the parameter $\zeta \geqslant \varrho$). The following theorem
collects the main results of our paper. 

\begin{main_theorem}
	\label{th - Martin} 
	Let $\scrX$ be a thick regular locally finite affine building. 
	\begin{enumerate}[label=(\roman*), ref=\roman*]
	\item 
	\label{en:8:1}
	For any isotropic irreducible finite range random walk on $\scrX$, the following dichotomy holds:
	\begin{itemize}
		\item[$\bullet$]~\emph{[At the bottom of the spectrum]}~If $\zeta=\varrho$, then $\clr{\scrX}_{M, \varrho}$
		is $\Aut(\scrX)$-equivariantly isomorphic to any of the Furstenberg (measure-theoretic) 
		or the Caprace--L\'ecureux (combinatorial) compactifications of the set $V_g$ of good vertices.
		\item[$\bullet$]~\emph{[Above the bottom of the spectrum]}~If $\zeta > \varrho$, then
		$\clr{\scrX}_{M, \zeta}$ is $\Aut(\scrX)$-equivariantly isomorphic to the join of any of the previous
		compactifications with the Gromov (horofunction) compactification. 
	\end{itemize}
	\item 
	\label{en:8:2}
	If the root system of the building is non-reduced, there exists an isotropic irreducible finite range random walk
	on $\scrX$ providing Martin compactifications of special vertices satisfying the same dichotomy.
	\end{enumerate}
\end{main_theorem}

Note that when the root system of the building is reduced, the notions of special and good vertices coincide. 
On Figure \ref{fig:1} we present an example of the closure of an apartment in a Martin compactification when the parameter
$\zeta$ is above the bottom of the spectrum $\varrho$. As the picture suggests, and as stated in the theorem, the Martin
compactification dominates both: the Gromov and the (maximal) Furstenberg compactifications. It is illustrated on
Figure \ref{fig:2} at the level of closures of apartments. 

The idea of the proof is (as in \cite{gjt}) to use a family of remarkable unbounded sequences -- called 
\emph{core sequences}~(\ref{sec:11}) -- such that: 
\begin{itemize}[label=\tiny$\bullet$]
	\item~any unbounded sequence of good vertices admits a core subsequence; 
	\item~any core sequence converges in most of our compactifications, the limit being precisely localized thanks to 
	the parameters describing the core sequence. 
\end{itemize}
In this context, identifying two compactifications then amounts to showing that the exact localization of the limit of a 
core sequence in the boundaries is done via the same process out of the parameters of the core sequence for both 
compactifications. This explains why the heart of the proof of the identification theorems as above is the combination of 
a convergence and of a uniqueness statement. Consequently, the proof of Theorem \ref{th - Martin}\eqref{en:8:1} in the case 
$\zeta=\varrho$ basically follows from Theorem \ref{thm:4} for convergence and Theorem \ref{thm:6} for uniqueness 
(see Theorem \ref{thm:7}), and the proof of Theorem \ref{th - Martin}\eqref{en:8:1} in the case $\zeta>\varrho$ follows 
from Theorem \ref{thm:10} for convergence and Theorem \ref{thm:2} for uniqueness (see Theorem \ref{thm:8}); at last, 
Theorem \ref{th - Martin}\eqref{en:8:2} follows from the Appendix  (see Theorem \ref{thm:9}). 

\begin{figure}[ht!]
	\includegraphics[width=17em]{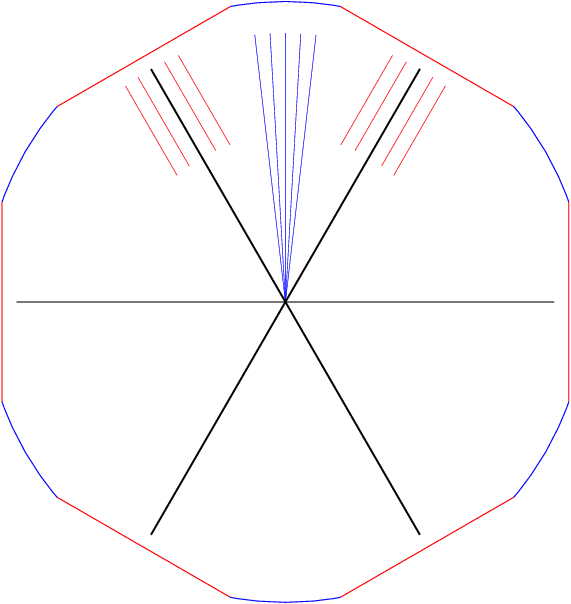}
	\caption{Closure of an apartment in the Martin compactification above the bottom of the spectrum
	($\widetilde{{\rm A}}_2$ case)}
	\label{fig:1}
\end{figure}

\begin{figure}[ht!]
	\begin{center}
  	\includegraphics[width=0.35\linewidth]{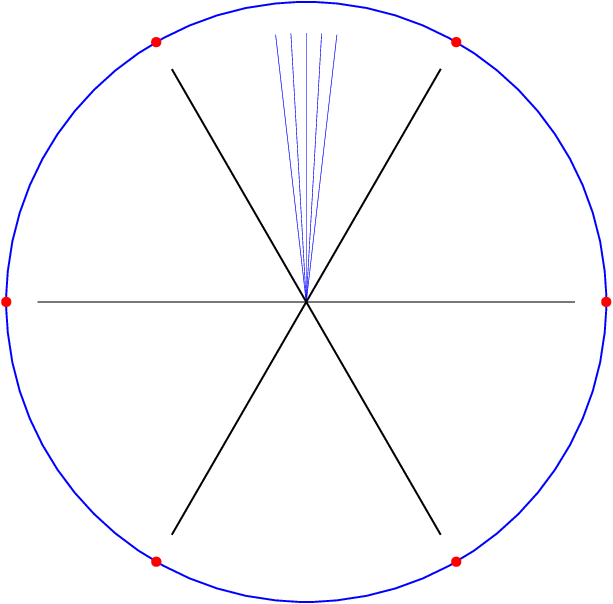}
  	\hspace{0.1\linewidth}
	\includegraphics[width=0.3\linewidth]{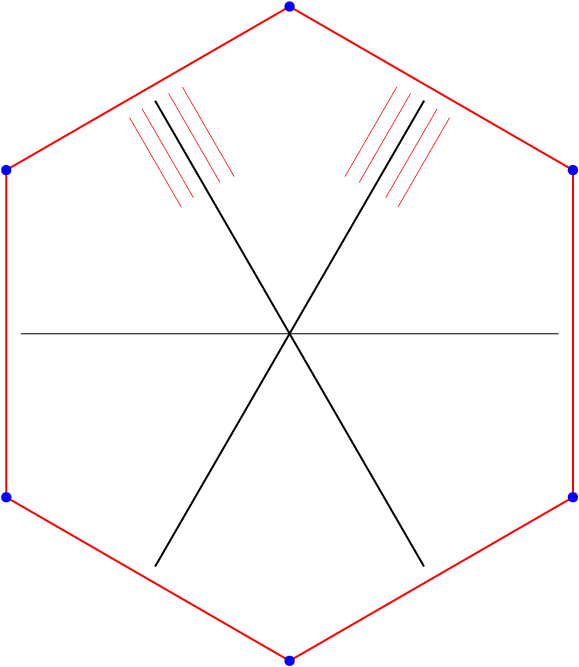}
 	\caption{The left picture is the closure of an apartment in the Gromov compactification
	whereas the right picture is the closure of an apartment in the (maximal) Furstenberg compactification
	($\widetilde{{\rm A}}_2$ case)}
	\label{fig:2}
\end{center}
\end{figure}

Let us discuss now some pre-existing compactifications. Many of them were initially defined when
the affine building comes from a semisimple algebraic group $\mathbf{G}$ over a locally compact non-Archimedean valued
field $k$, the first ones being due to E.~Landvogt thanks to a gluing procedure \cite{Landvogt}. In this context, by
Bruhat--Tits theory \cite{BruhatTits1972}, the group $\mathbf{G}(k)$ acts on an affine building $\scrX$ and the action
is strongly transitive in the sense that $\mathbf{G}(k)$ acts transitively on the inclusions of an alcove ({\it i.e.} a maximal
simplex) in an apartment. To our knowledge, the richest situation from the perspective of algebraic structures where
full Bruhat--Tits buildings are compactified (not only their vertices) and where integral models of $\mathbf{G}$
(as defined in \cite{BruhatTits1984}) are taken into account, is treated in \cite{RTW10}. This requires to use Berkovich
analytic geometry over non-Archimedean fields, and at the end this leads to connections with representation theory \cite{RTW12}
and algebraic geometry \cite{RTW17}. The outcome is a finite family of compactifications, indexed (as for 
symmetric spaces) by the conjugacy classes of parabolic subgroups. In the biggest one, corresponding to the choice of a
minimal parabolic subgroup, the closure of the vertices is equivariantly isomorphic to the group-theoretic
compactification, hence to the Martin compactification at the bottom of the spectrum ({\it i.e.} when $\zeta=\rho$). Therefore,
the case of a parameter above the bottom of the spectrum, $\zeta>\rho$, provides new compactifications of vertices which
take into account additional radial parameters for the convergence of suitable unbounded sequences (in addition to distances 
with respect to some faces of the Weyl sectors in the building). The following theorem collects Theorems \ref{thm:BerkoMartin} 
and \ref{thm:BrTMartinAbove} (more precisely, the identification of the Martin compactification at the bottom of the spectrum 
with the maximal Satake--Berkovich, polyhedral or group-theoretic compactification is provided by Theorem \ref{thm:BerkoMartin}, 
and the Martin compactification above the bottom of the spectrum is described in Theorem \ref{thm:BrTMartinAbove}). 

\begin{main_theorem}
	\label{th - alg gr} 
	Let $\mathbf{G}$ be a semisimple algebraic group defined over a locally compact non-Archimedean valued field $k$, and
	let $\scrX$ be its Bruhat--Tits building. We choose a good vertex in $\scrX$ and denote by $K$ its stabilizer.
	Let $p$ be a compactly supported bi-$K$-invariant well-behaved probability measure on $\mathbf{G}(k)$. Then the
	Martin compactification $\clr{\scrX}_{M, \varrho}$ can be equivariantly identified with the maximal Satake--Berkovich
	compactification of $\scrX$ (from analytic geometry); hence for $\zeta>\varrho$, the Martin compactification
	$\clr{\scrX}_{M, \zeta}$ is the join of $\clr{\scrX}_{M, \varrho}$ and of the Gromov compactification. 
\end{main_theorem}
In other words, in Theorems \ref{th - Martin} and \ref{th - alg gr} we provide a concrete potential-theoretic way to construct
compactifications that could only be obtained artificially by means of joining two previously known compactifications
({\it i.e.} by means of embedding diagonally the building in their product and taking the closure of the image). 

In this paper, we also make the choice of working in a situation which is as discrete as possible. 
This is consistent with standard references on random walks on graphs \cite{Woess} and with some recent works dealing with
spherical harmonic analysis on buildings, in particular with those due to A.M.~Mantero and A.~Zappa (e.g. \cite{mz1} and
\cite{mz2}) and to J.~Parkinson (e.g. \cite{park3} and \cite{park2}). This leads also to significant differences with \cite{gjt}.
Moreover, the statements of Theorem \ref{th - Martin} are valid even with small, possibly trivial, automorphism groups for 
$\scrX$: according to J.~Tits' classification \cite{Tits1986}, there exist affine buildings that are not relevant to any
algebraic group situation only when the dimension is $1$ (trees) or $2$. Nevertheless, the excluded cases are very interesting
because in dimension $2$ they lead to situations in which natural questions such as the linearity of some automorphism groups, 
(super)rigidity, arithmeticity or, on the contrary, simplicity properties of lattices, and also property (T) and strengthening 
of it, make sense and are strong motivations to develop more tools of geometric and analytic nature (see for instance 
\cite{BadCapLec} and \cite{LecSalWit} for striking recent results in this field).

Another purely geometric approach was developed by C.~Charignon \cite{Charignon2009} and by G.~Rousseau
\cite{Rousseau2023}. For our purposes this is an interesting viewpoint since, without any group action, it sticks as much
as possible to Bruhat--Tits' original approach governed by Lie combinatorics. By this we mean that the obtained
compactifications, isomorphic to the previous ones, are those which make most easily appear the modular structure of
the compact space: the boundary can be seen as a disjoint union of affine buildings at infinity of smaller rank, called
fa\c cades (\footnote{The terminology fa\c cade refers to a French word meaning the front face of a building;
here a fa\c cade is an affine building at infinity.}) in [loc. cit.]. In the Bruhat--Tits case, these fa\c cades are proven
to be the affine buildings attached to the parabolic subgroups of the initial non-Archimedean semisimple group \cite{RTW10}.
We rely on this already known structure in order to study the convergence of sequences of harmonic measures on affine buildings
in the spirit of Furstenberg compactifications. Each of these harmonic measures is defined on the maximal boundary $\Omega$
of the affine building $\scrX$, which is the set of parallelism classes of sectors endowed with a natural totally disconnected
topology ($\Omega$, as a set, consists of the chambers of the spherical building at infinity $\scrX^\infty$, see 
\cite[Section 11.8]{Abramenko2008}).

Each harmonic measure is attached to a well-defined special vertex in the building and, roughly speaking, is characterized
by the fact that it is the most symmetric probability measure on $\Omega$ with respect to the vertex 
(see \cite[Chapter 7]{park} and Section \ref{ss - harm meas}). Noting that each stratum at infinity, being an affine
building, can carry its own harmonic measures on its own maximal boundary, the following result, which we prove in Theorem
\ref{thm:3}, makes sense.
\begin{main_theorem}
	\label{th - harm conv} 
	Let $\scrX$ be a thick regular locally finite affine building. The closure of the collection of harmonic measures
	on $\scrX$ in the space of probability measures $\calP(\Omega)$ on the maximal boundary $\Omega$ endowed with 
	the weak-$*$ topology is $\Aut(\scrX)$-equivariantly isomorphic to the polyhedral or to the combinatorial compactification
	of $\scrX$. More precisely, the maximal boundary of each affine building at infinity, or stratum, can be seen as a residue
	in $\Omega$ and any cluster value of any unbounded sequence of harmonic measures in $\scrX$ is a harmonic measure on a
	well-defined stratum. 
\end{main_theorem}
Let us roughly sum up this part of the paper: affine buildings provide a suitable framework to generalize to higher dimensions
the classical study of harmonic measures on infinite graphs (since the set of initial harmonic measures has a strong
geometric structure related with Lie theory). The resulting measures at infinity are again harmonic measures for smaller
affine buildings in the boundary, and therefore provide an analogous geometric structure at infinity. 
We emphasize that in our purely geometric context, we cannot use the interpretation of maximal boundaries in terms of maximal 
flag varieties, and the related fibrations from the maximal flag variety to a flag variety associated with a non-minimal 
parabolic subgroup. Instead of this, we have to investigate geometrically sets of residues of a given type and interpret them as 
maximal boundaries of smaller affine buildings (at infinity). We also need to prove a disintegration formula for harmonic
measures in this context which can be seen as measure-theoretic substitutes for maps between flag varieties.

The reader who knows about compactifications of non-compact Riemannian symmetric spaces has already understood that our
results perfectly parallel the latter situation, at least at the level of the obtained statements. From this perspective, we
owe a lot to the book \cite{gjt} by Y.~Guivarc'h, L.~Ji and J.-C.~Taylor (see also \cite{gu} and \cite{MR1832435}) where, among
many other things, the precise descriptions of the Martin compactifications of symmetric spaces, both at the bottom and above
the bottom of the spectrum, are given. We used from there the idea of exhibiting well-chosen classes of unbounded sequences that
become convergent after applying a suitable embedding with compact metrizable target space (e.g. a space of probability measures
on a flag manifold, the Chabauty space of closed subgroups of the isometry group etc.); this is a good tool to compare the
various compactifications, including the most algebraic ones \cite{RTW10}. The book and Y.~Guivarc'h's quoted articles are the 
first places where most Bruhat--Tits analogues were conjectured. The latter references mainly study Riemannian symmetric spaces,
for which the most important ingredients are sufficiently precise Green kernel estimates \cite{aj} and some uniqueness results
for solutions of well-chosen PDEs taking into account invariance under suitable group actions. 

We decided to push the logic of using sequences to its fullest application. This means that we have chosen to study the
compactifications, possibly by adapting the class of sequences according to the finally expected boundary, through the 
parametrization of the points at infinity provided by the initial data characterizing the unbounded sequence used. Both for 
symmetric spaces and for Bruhat--Tits buildings, we know that the closure of a Weyl sector completely describes the 
compactification (it is a consequence of the Cartan decomposition). For a given compactification, the question is then to 
know which geometric parameters related to these simplicial cones are to be taken into account. In the case of the Gromov 
compactification, whose boundary is in all cases (Archimedean or not) a single spherical building, we know that the parameters
are radial and provide a direction of escape in the cone. In the case of all other compactifications considered, with the
exception of the Martin compactification above the bottom of the spectrum, the correct parameters are a partition of codimension
$1$ faces of Weyl sectors into two subsets: one for which the distances to the corresponding walls explode and the other for 
which the distances to the walls converge. In our approach, we obtain an identification between compactifications by showing
that the redundancies of parametrization of the limit points according to the sequence parameters are exactly the same on both
sides. The beauty of the Martin compactification above the bottom of the spectrum is that we have to use a similar partition as
before, but to use an additional radial (partial) parameter for the subset of walls with exploding distances 
(see Theorem \ref{thm:10}): this explains why it is obtained by joining the visual and any of the other previous
compactifications. 

Apart from the already mentioned fact that we avoid using group actions in order to make our results available to the
study of exotic situations, another significant difference with the book \cite{gjt} is the fact that one key ingredient
there was provided by estimates of the heat kernel and of Green functions due to J.-Ph. Anker and L.~Ji \cite{aj}, while
we use here asymptotics for the Green kernels, previously obtained by the second author in \cite{tr}. The latter formulas
are exact asymptotics of the requested kernels, so they can be directly used for our purposes. In particular, we cannot 
(and need not) use uniqueness arguments for solutions of partial differential equations, assumed in addition to be invariant
under some unipotent subgroup of the full isometry group (as in \cite[Theorem 7.22]{gjt}). This more direct approach can be
seen as a further manifestation of the fact that some formulas in spherical harmonic analysis can be simplified more efficiently
in the non-Archimedean case: Harish-Chandra's integral formula for spherical functions remains what it is on real numbers,
while it was algebraized by I.G.~Macdonald as early as in the 1970s \cite{macdo0}.

\subsection*{Choices and conventions}
Let us repeat quickly some choices: we are generically dealing with affine buildings without assuming the existence
of any sufficiently transitive group action. When dealing with affine buildings arising from semisimple groups over local
fields, we will explicitly call the corresponding spaces \emph{Bruhat--Tits buildings}: in other words, no Bruhat--Tits
building if no group of rational points $\mathbf{G}(k)$. Also, we use the notation ${\rm Stab}_G(x)$ to denote the stabilizer 
of a point $x$ in a group $G$ acting on a set $X$ containing $x$. 

In order to optimally use the requested analytic formulas (e.g. Green kernel asymptotics), we mainly see buildings as sets of 
(special or good) vertices; the only exception to this rule is Section \ref{sec:6} introducing affine buildings at infinity 
according to G.~Rousseau's approach (fa\c cades in his terminology). Moreover for a given affine building, the only apartment 
system we use on it is the complete one (in other words, and said in the metric language: any subset isometric to a Euclidean 
space and maximal for this property is an apartment). 

At last, we will be led to use subroot systems corresponding to subsets of simple roots: if $I$ is such a subset, then 
the index ${}_I$ attached to the standard notation for a given notion will mean that the object under consideration is defined 
with respect to the subroot system generated by $I$; this convention applies to root systems: $\Phi_I$ is the subroot system 
generated by $I$, but also to analytic notions: for instance, $c_I$ will be the Harish Chandra function associated with $\Phi_I$ 
etc. 

\subsection*{Structure of the paper}
Section \ref{sec:9} recalls as quickly as possible the useful notions from building theory; it introduces the
relevant classes of unbounded sequences and the definition of a compactification in our context. 
Section \ref{sec:6} is the place where we use
different, more metric, definitions of buildings, in order to recall the purely geometric construction of affine buildings
at infinity. Section \ref{sec:5} contains a discrete variation on the theme of visual compactifications; this adaptation
is useful to describe Martin compactifications above the bottom of the spectrum when seeing a building as a set of vertices. 
Section \ref{s - comb} deals with combinatorial compactifications of buildings which can be introduced in a remarkably 
elementary way in the affine case and which will mainly be used as a tool in the paper. Section \ref{s - max boundary} deals 
with harmonic measures on the maximal boundary of affine buildings; it contains results preparing the study of the Furstenberg 
compactification which may be useful in their own. The goal of Section \ref{s - Furst comp} is precisely to describe
compactifications of affine buildings obtained by suitably embedding them into the spaces of probability measures on maximal 
boundaries; the point is to show that cluster values of unbounded sequences of harmonic measures are still harmonic measures 
for affine buildings at infinity. Section \ref{sec:7} is the main part of the paper: it studies the Martin compactifications 
of affine buildings and proves the main Theorem \ref{th - Martin}; this requires to recall some notions from potential and 
probability theory. In Appendix \ref{ap:1}, we construct a distinguished random walk when the root system of the affine building 
is non-reduced; this random walk provides the desired Martin compactifications on the set of all special vertices (not only the 
good ones). Note that at the end of each relevant section, we illustrate our geometric results by providing their Bruhat--Tits 
consequences. 

\section{Affine buildings and compactifications}
\label{sec:9}

\subsection{Buildings}
\label{sec:10}
A family $\scrX$ of non-empty finite subsets of some set $V$ is an \emph{abstract simplicial complex}
if for all $\sigma \in \scrX$, each subset $\gamma \subseteq \sigma$ also belongs to $\scrX$. The elements of $\scrX$
are called \emph{simplices}. The dimension of a simplex $\sigma$ is $\#\sigma - 1$. Zero dimensional simplices are called
\emph{vertices}. The set $V(\scrX) = \bigcup_{\sigma \in \scrX} \sigma$ is the \emph{vertex set} of $\scrX$.
The dimension of the complex $\scrX$ is the maximal dimension of its simplices. A \emph{face} of a simplex $\sigma$
is a non-empty subset $\gamma \subseteq \sigma$. For a simplex $\sigma$  we denote by $\St(\sigma)$ the collection of
simplices containing $\sigma$; in particular, $\St(\sigma)$ is a simplicial complex. Two abstract simplicial complexes
$\scrX$ and $\scrX'$ are \emph{isomorphic} if there is a bijection $\psi: V(\scrX) \rightarrow V(\scrX')$ such that for
all $\sigma = \{x_1, \ldots, x_k\} \in \scrX$ we have $\psi(\sigma) = \{\psi(x_1), \ldots, \psi(x_k)\} \in \scrX'$.
With every abstract simplicial complex $\scrX$ one can associate its \emph{geometric realization} $|\scrX|$ in the vector 
space of functions $V \rightarrow \RR$ with finite support, see e.g. \cite[\S2]{Munkres1996}.

A set $\scrC$ equipped with a collection of equivalence relations $\{\sim_i : i \in I\}$ where $I = \{0, \ldots, r\}$,
is called a \emph{chamber system} and the elements of $\scrC$ are called \emph{chambers}. A \emph{gallery} of type
$f = i_1 \ldots i_k$ in $\scrC$ is a sequence of chambers $(c_1, \ldots, c_k)$ such that for all $j \in \{1,2, \ldots k\}$, 
we have $c_{j-1} \sim_{i_j} c_j$, and $c_{j-1} \neq c_j$. If $J \subseteq I$, a \emph{residue} of type $J$ is a subset of 
$\scrC$ such that any two chambers can be joined by a gallery of type $f = i_1 \ldots i_k$ with $i_1, \ldots, i_k \in J$. 
From a chamber system $\scrC$ we can construct an abstract simplicial complex where each residue of type $J$ corresponds to 
a simplex of dimension $r - \#J$. Then, for a given vertex $x$, we denote by $\calC(x)$ the set of chambers containing $x$. 

A \emph{Coxeter group} is a group $W$ given by a presentation
\[
	\left\langle
	r_i : (r_i r_j)^{m_{i, j}} = 1, \text{ for all } i, j \in I
	\right\rangle
\]
where $M = (m_{i,j})_{I \times I}$ is a symmetric matrix with entries in $\ZZ \cup \{\infty\}$ such that for all $i, j \in I$,
\[
    m_{i,j} =
    \begin{cases}
        \geqslant 2 &\text{if } i \neq j, \\
        1 & \text{if } i = j.
    \end{cases}
\]
For a word $f=i_1 \cdots i_k$ in the free monoid $I$ we denote by $r_f$ an element of $W$ of the form
$r_f= r_{i_1} \cdots r_{i_k}$. The length of $w \in W$, denoted $\ell(w)$, is the smallest integer $k$ such that there
is a word $f=i_1 \cdots i_k$ and $w=r_f$. We say that $f$ is reduced if $\ell(r_f) = k$. A Coxeter group $W$ may be turned
into a chamber system by introducing in $W$ the following collection of equivalence relations: $w \sim_i w'$ if and only if 
$w = w'$ or $w = w' r_i$. The corresponding simplicial complex $\Sigma$ is called 
\emph{Coxeter complex}. 

A simplicial complex $\scrX$ is called a \emph{building of type $\Sigma$} if it contains a family of subcomplexes called
\emph{apartments} such that
\begin{enumerate}[start=0, label=(B\arabic*), ref=B\arabic*]
	\item \label{en:3:1}
	each apartment is isomorphic to $\Sigma$,
	\item \label{en:3:2}
	any two simplices of $\scrX$ lie in a common apartment,
	\item \label{en:3:3}
	for any two apartments $\scrA$ and $\scrA'$ having a chamber in common there is an 
	isomorphism $\psi: \scrA \rightarrow \scrA'$ fixing $\scrA \cap \scrA'$ pointwise.
\end{enumerate}
The rank of the building is the cardinality of the set $I$. We always assume that $\scrX$ is irreducible. A simplex $c$ is a 
chamber in $\scrX$ if it is a chamber in any of its apartments. By $C(\scrX)$ we denote the set of chambers in $\scrX$. Using
the building axioms we see that $C(\scrX)$ has a chamber system structure. However, it is not unique. A geometric realization
of the building $\scrX$ is its geometric realization as an abstract simplicial complex. In this article we assume that
the system of apartments in $\scrX$ is \emph{complete}, meaning that any subcomplex of $\scrX$ isomorphic to $\Sigma$ is an
apartment. We denote by $\Aut(\scrX)$ the group of automorphisms of the building $\scrX$.

\subsection{Affine Coxeter complexes}
\label{sec:1}
In this section we recall basic facts about root systems and Coxeter groups. A general reference is \cite{Bourbaki2002},
which deals with Coxeter systems attached to reduced root systems. Since we use from the beginning possibly non-reduced
root systems, we will also refer to \cite{mz1, park}.

Let $\Phi$ be an irreducible, but not necessary reduced, finite root system in Euclidean space $\mathfrak{a}$ with associated
norm denoted by $|\cdot|$. Select $\{\alpha_i: i \in I_0\}$, where $I_0=\{1, \ldots, r\}$, a fixed base of $\Phi$, 
and let $\Phi^+$ be the corresponding set of all positive roots. Since $\Phi$ is irreducible, there is a unique highest root
$\alpha_0=\sum_{i \in I_0} m_i \alpha_i$, $m_i \in \NN_0$. We set
\[
	I_g=\{0\} \cup \{i \in I_0: m_i = 1\}.
\]
For each $\alpha \in \Phi$, we define a dual root
\[
	\alpha\spcheck = \frac{2}{\sprod{\alpha}{\alpha}} \alpha.
\]
Let $\Phi\spcheck = \{\alpha\spcheck : \alpha \in \Phi\}$ be the dual root system. Then the \emph{co-root lattice} $Q$
is the $\zspan$ of $\Phi\spcheck$. Let $Q^+ = \sum_{\alpha \in \Phi^+} \NN_0 \alpha\spcheck$. The dual basis to
$\{\alpha_i: i \in I_0\}$ are fundamental co-weights $\{\lambda_i: i \in I_0\}$. The co-weight lattice $P$ is the $\zspan$ of 
$\{\lambda_i: i \in I_0\}$. A co-weight $\lambda \in P$ is called dominant if $\lambda = \sum_{i \in I_0} x_i \lambda_i$ 
where $x_i \geqslant 0$ for all $i \in I_0$. Finally, the cone of all dominant co-weights is denoted by $P^+$. If $x_i > 0$ 
for all $i \in I_0$, then $\lambda$ is strongly dominant. We set
\[
	\tilde{\rho} = \frac{1}{2} \sum_{\alpha \in \Phi^+} \alpha.
\]
Let $\calH$ be the family of affine hyperplanes, called \emph{walls}, being of the form
\[
	H_{\alpha; k} = \big\{x \in \frakA : \langle x, \alpha \rangle = k \big\}
\]
where $\alpha \in \Phi^+$ and $k \in \ZZ$. Each wall determines two half-apartments
\[
	H^-_{\alpha; k} = \big\{x \in \frakA : \langle x, \alpha \rangle \leqslant k\big\}
	\quad\text{and}\quad
	H^+_{\alpha; k} = \big\{x \in \frakA : \langle x, \alpha \rangle \geqslant k\big\}.
\]
Note that for a given $\alpha$, the family $H^-_{\alpha; k}$ is increasing in $k$ while the family $H^+_{\alpha; k}$ is
decreasing. To each wall we associate $r_{\alpha; k}$ the orthogonal reflection in $\frakA$ defined by 
\[
	r_{\alpha; k}(x) = x - \big(\sprod{x}{\alpha} - k\big)\alpha\spcheck.
\]
Set $r_0 = r_{\alpha_0; 1}$, and $r_i = r_{\alpha_i; 0}$ for each $i \in I_0$. 

The \emph{finite Weyl group} $W$ is the subgroup of $\GL(\mathfrak{a})$ generated by $\{r_i: i \in I_0\}$. Let us denote
by $w_0$ the longest element in $W$. The \emph{fundamental sector} in $\frakA$ defined as
\[
	S_0 = \big\{x \in \frakA : \sprod{x}{\alpha_i} \geqslant 0 \text{ for all } i \in I_0\big\} 
	= \bigoplus_{i \in I_0} \RR_+ \lambda_i 
	= \bigcap_{i \in I_0} H^+_{\alpha_i; 0}
\]
is the fundamental domain for the action of $W$ on $\frakA$.

The \emph{affine Weyl group} $W^a$ is the subgroup of $\Aff(\mathfrak{a})$ generated by $\{r_i: i \in I\}$. Observe that 
$W^a$ is a Coxeter group. The hyperplanes $\calH$ give the geometric realization of its Coxeter complex $\Sigma_\Phi$. To see
this, let $C(\Sigma_\Phi)$ be the family of closures of the connected components of
$\frakA \setminus \bigcup_{H \in \calH} H$. By $C_0$ we denote the \emph{fundamental chamber}
(or \emph{fundamental alcove}), {\it i.e.}
\[
	C_0 = \big\{x \in \frakA : \sprod{x}{\alpha_0} \leqslant 1 \text{ and } \sprod{x}{\alpha_i} \geqslant 0 \text{ for all } 
	i \in I_0\big\} = \bigcap_{i \in I_0} H^+_{\alpha_i; 0}  \cap H^-_{\alpha_0; 1}
\]
which is the fundamental domain for the action of $W^a$ on $\frakA$. Moreover, the group $W^a$ acts simply transitively 
on $C(\Sigma_\Phi)$. This allows us to introduce a chamber system in $C(\Sigma_\Phi)$: For two chambers $C$ and $C'$ and
$i \in I$, we set $C \sim_i C'$ if and only if $C = C'$ or there is $w \in W^a$ such that $C = w . C_0$ and
$C' = w r_i . C_0$. 

The vertices of $C_0$ are $\{0, \lambda_1/m_1, \ldots, \lambda_r/m_r\}$. Let us denote the set of vertices of all 
$C \in C(\Sigma_\Phi)$ by $V(\Sigma_\Phi)$. Under the action of $W^a$, the set $V(\Sigma_\Phi)$ is made up of $r+1$
orbits $W^a.0$ and $W^a.(\lambda_i/m_i)$ for all $i \in I_0$. Thus setting $\tau_{\Sigma_\Phi}(0) = 0$, and
$\tau_{\Sigma_\Phi}(\lambda_i/m_i) = i$ for $i \in I_0$, we obtain the unique labeling
$\tau_{\Sigma_\Phi} : V(\Sigma_\Phi) \rightarrow I$ such that any chamber $C \in C(\Sigma_\Phi)$ has one vertex with each label. 

For each simplicial automorphism $\vphi: \Sigma_\Phi \rightarrow \Sigma_\Phi$ there is a permutation $\pi$ of the set $I$
such that for all chambers $C$ and $C'$, we have $C \sim_i C'$ if and only if $\vphi(C) \sim_{\pi(i)} \vphi(C')$, and 
\[
	\tau_{\Sigma_\Phi}(\vphi(v)) = \pi(\tau_{\Sigma_\Phi}(v)),
	\qquad\text{for all } v \in V(\Sigma_\Phi).
\]
A vertex $v$ is called \emph{special} if for each $\alpha \in \Phi^+$ there is $k$ such that $v$ belongs to $H_{\alpha; k}$.
The set of all special vertices is denoted by $V_s(\Sigma_\Phi)$. A co-dimension $1$ simplex whose vertices have their labels
in $I \setminus \{i\}$ is called an \emph{$i$-panel}.

Given $\lambda \in P$ and $w \in W^a$, the set $S = \lambda + w . S_0$ is called a \emph{sector} in $\Sigma_\Phi$ with a
\emph{base vertex} $\lambda$. Its sector $i$-panel is $\lambda + w . \big(S_0 \cap H_{\alpha_i; 0}\big)$.

Moreover, by \cite[Corollary 3.20]{Abramenko2008}, an affine Coxeter complex $\Sigma_\Phi$ uniquely determines the affine Weyl
group $W^a$ but not a finite root system $\Phi$. In fact, the root systems $\text{C}_r$ and $\text{BC}_r$ have the same
affine Weyl group.

\subsection{Affine buildings}
\label{sec:8}
A building $\scrX$ of type $\Sigma$ is called an \emph{affine building} if $\Sigma$ is a Coxeter complex corresponding
to an affine Weyl group. Select a chamber $c_0$ in $C(\scrX)$ and an apartment $\scrA_0$ containing $c_0$. Using
an isomorphism $\psi_0: \scrA_0 \rightarrow \Sigma$ such that $\psi_0(c_0) = C_0$, we define the labeling in $\scrA_0$ by
\[
	\tau_{\scrA_0}(v) = \tau_\Sigma(\psi_0(v)), \qquad v \in V(\scrA_0).
\]
Now, thanks to the building axioms the labeling can be uniquely extended to $\tau: V(\scrX) \rightarrow I$. We turn 
$C(\scrX)$ into a chamber system over $I$ by declaring that two chambers $c$ and $c'$ are $i$-adjacent if they share all
vertices except the one of type $i$ (equivalently, they intersect along an $i$-panel). For each $c \in C(\scrX)$ and $i \in I$,
we define
\[
	q_i(c) = \#\big\{c' \in C(\scrX) : c' \sim_i c \big\} - 1.
\]
In all the paper, we \emph{assume} that $q_i(c)$ only depends on $i$, {\it i.e.} that $q_i(c)$ is independent of $c$, and
therefore the building $\scrX$ is \emph{regular}; we henceforth write $q_i$ instead of $q_i(c)$. We also assume that 
$1 < q_i(c) < \infty$ and therefore the building $\scrX$ is \emph{thick} and \emph{locally finite}. 
Notice that for Bruhat--Tits buildings all of the assumptions about thicknesses are automatic. 
A vertex of $\scrX$ is special if it is special in any of its apartments. The set of special vertices is denoted by $V_s$. We
choose the finite root system $\Phi$ in such a way that $\Sigma$ is its Coxeter complex. In all cases except when the
affine group has type $\text{C}_r$ or $\text{BC}_r$, the choice is unique. In the remaining cases we select $\text{C}_r$
if $q_0 = q_r$, otherwise we take $\text{BC}_r$. This guarantees that $q_{\tau(\lambda)} = q_{\tau(\lambda+\lambda')}$
for all $\lambda, \lambda' \in P$, see the discussion in \cite[Section 2.13]{mz1}. 

Let us define the set of \emph{good} vertices $V_g$ consisting of those $x \in V(\scrX)$ having the
type $\tau(x) \in I_g$. If $\Phi$ is reduced, then all special vertices are good. However, we do not have this property
in the case of $\text{BC}_r$. Indeed, there are two types of special vertices, $0$ and $r$, but only those
of type $0$ are mapped to $P$. To deal with the vertices of type $r$ we modify the isomorphisms $\psi_0$ that we 
have used to define $\tau$. For this purpose we use the non-trivial automorphism $\vphi_\varepsilon$ of the Coxeter
complex which permutes the vertices of the base chamber $C_0$. Let $\varepsilon$ be the corresponding permutation of
the set $I$. We compose the isomorphism $\psi_0$ with the automorphism $\vphi_\varepsilon$ getting a new isomorphism
$\psi_\varepsilon$ which we use to get the new labeling $\tau_\varepsilon = \varepsilon \circ \tau$ on $V(\scrX)$, which 
we call $\varepsilon$-type. In particular, the vertices of type $r$ have $\varepsilon$-type $0$ now and vice-versa. We set
$V_g^\varepsilon = \{x \in V(\scrX) : \tau(x) = r\}$. Hence, $V_s = V_g \sqcup V_g^\varepsilon$. Now, all the statements
for good vertices hold true for $V_g^\varepsilon$ after application of the permutation $\varepsilon$. In general, all notations
with an index $\varepsilon$ will correspond to the standard notions after applying this twist of types. 

A half-apartment in $\scrX$ is a half-apartment in any of its apartments. For a subset $X$ of $V(\scrX)$,
the convex hull of $X$ is the set of all vertices of $\scrX$ that belong to every half-apartment containing $X$. 
The convex hull of two vertices $x$ and $y$ is denoted by $[x, y]$. A subcomplex $\scrS$ is called a sector of 
$\scrX$ if it is a sector in any apartment. We say that $\pi$ is the sector $i$-panel of $\scrS$ if there are an apartment 
$\scrA$ containing $\scrS$ and a type-preserving isomorphism $\psi: \scrA \rightarrow \Sigma$, such that $\psi(\pi)$ is 
the sector $i$-panel of the sector $\psi(\scrS)$. Two sectors are \emph{equivalent} if they contain a common subsector. 
The set of equivalence classes of sectors is denoted by $\Omega$ and is called the \emph{maximal boundary} of $\scrX$. 
For any special vertex $x \in V_s$ and $\omega \in \Omega$, there is a unique sector denoted by $[x, \omega]$ which has base 
vertex $x$ and represents $\omega$. For any two elements $\omega_1$ and $\omega_2$ from $\Omega$ there is an apartment of
$\scrX$ containing sectors $[x, \omega_1]$ and $[x, \omega_2]$ for a certain special vertex $x$. If the apartment is unique 
we say that $\omega_1$ and $\omega_2$ are \emph{opposite} while the apartment is denoted as $[\omega_1, \omega_2]$. 
For $x \in V_s$ and $y \in V(\scrX)$, we set
\[
	\Omega(x, y) = \big\{\omega \in \Omega : y \in [x, \omega] \big\}.
\]
Then for each $x \in V_s$, the collection $\big\{ \Omega(x, y) : y \in V_s \big\}$ generates a totally disconnected
compact Hausdorff topology in $\Omega$. In fact, the topology is independent of the choice of the reference vertex $x$,
see e.g. \cite[Proposition 3.15]{mz1}. Let us observe that each $\Omega(x, y)$ is clopen in $\Omega$, that is open and at 
the same time closed. The action of the automorphism group $\Aut(\scrX)$ can be extended to a continuous
action on $\Omega$. We fix once and for all the origin $o$ which is a good vertex of $c_0$.

The maximal boundary $\Omega$ can be equipped with adjacency relations to turn it into a chamber system. To be more
precise, we say that two chambers $\omega$ and $\omega'$ are $i$-adjacent if there are sectors $\scrS$ and $\scrS'$ 
representing $\omega$ and $\omega'$ respectively, and such that $\scrS \cap \scrS'$ contains an $i$-panel. In fact,
the obtained structure is a spherical building called the \emph{spherical building at infinity} $\scrX^\infty$. For 
details see e.g. \cite[Theorem 9.6]{ron}.

Given two special vertices $x, y \in V_s$, let $\scrA$ be an apartment containing $x$ and $y$ and let 
$\psi: \scrA \rightarrow \Sigma$ be a type-rotating isomorphism such that $\psi(x) = 0$ and $\psi(y) \in S_0$, 
see \cite[Definition 4.1.1]{park}. We set $\sigma(x, y) = \psi(y) \in \frac{1}{2} P^+$. If $x$ and $y$ are good vertices then
$\sigma(x, y) \in P^+$. For $\lambda \in P^+$ and $x \in V_g$, we denote by $V_\lambda(x)$ the set of all good vertices
$y \in V_g$ such that $\sigma(x, y) = \lambda$. The building axioms entail that the cardinality of $V_\lambda(x)$ depends only
on $\lambda$, see \cite[Proposition 1.5]{park2}. Let $N_\lambda$ be the common value. 

Let $\Phi^{++}$ be the set of roots $\alpha \in \Phi^+$ such that $\frac{1}{2} \alpha \notin \Phi^+$. If
$\alpha \in \Phi^{++}$ then $q_{\alpha} = q_{i}$ provided that $\alpha \in W.\alpha_i$ for $i \in I$. We define
\[
	\tau_\alpha = 
	\begin{cases}
		1 	& \text{ if } \alpha \notin \Phi, \\
		q_\alpha & \text{ if } \alpha \in \Phi, \text{ but } \tfrac{1}{2} \alpha, 2 \alpha \notin \Phi, \\
		q_{\alpha_0} & \text{ if } \alpha, \tfrac{1}{2} \alpha \in \Phi, \text{ and therefore } 2 \alpha \notin \Phi, \\
		q_\alpha q_{\alpha_0}^{-1} 
		& \text{ if } \alpha, 2 \alpha \in \Phi, \text{ and therefore } \tfrac{1}{2}\alpha \notin \Phi.
	\end{cases}
\]
We set
\begin{align*}
	\eta 
	&= \frac{1}{2} \sum_{\alpha \in \Phi^+} (\log \tau_\alpha) \alpha \\
	&= \frac{1}{2} \sum_{\alpha \in \Phi^{++}} (\log \tau_\alpha \tau_{2\alpha}^2) \alpha.
\end{align*}
Since $r_i$ sends $\alpha_i$ to $-\alpha_i$ and permutes other indivisible positive roots, we have
\[
	r_i . \eta = \eta - \big(\log \tau_{\alpha_i} \tau_{2\alpha_i}^2\big) \alpha_i.
\]
thus
\begin{equation}
	\label{eq:32}
	\eta = \frac{1}{2} \sum_{i = 1}^r (\log \tau_{\alpha_i} \tau_{2\alpha_i}^2) \sprod{\alpha_i}{\alpha_i} \lambda_i.
\end{equation}
Let us define a multiplicative function on $\mathfrak{a}$ by the following formula
\[
	\chi(\lambda) = \prod_{\alpha \in \Phi^+} \tau_\alpha^{\sprod{\lambda}{\alpha}},
	\qquad \lambda \in \mathfrak{a}.
\]
For $w \in W^a$ having the reduced expression $w = r_{i_1} \cdots r_{i_k}$, we set $q_{w} = q_{i_1} \cdots q_{i_k}$. Then
by \cite[Proposition 1.5 \& Appendix A]{park2},
\begin{equation}
	\label{eq:8}
	N_\lambda = \frac{W(q^{-1})}{W_\lambda(q^{-1})} \chi(\lambda) 
\end{equation}
where $W_\lambda = \big\{w \in W :  w .\lambda = \lambda \big\}$, and where for any subset $U \subseteq W$ we set 
\[
	U(q^{-1}) = \sum_{w \in U} q_w^{-1}.
\]
We also have
\begin{equation}
	\label{eq:5}
	N_{\varepsilon;\lambda} =  \frac{W(q_\varepsilon^{-1})}{W_\lambda(q_\varepsilon^{-1})} \chi(\lambda).
\end{equation}
Lastly, let us define the horocycle (or Busemann) function $h: V_s \times V_s \times \Omega \rightarrow \frac{1}{2} P$: 
For two special vertices $x$ and $y$, and $\omega \in \Omega$, we set
\begin{equation}
	\label{eq:15}
	h(x, y; \omega) = \sigma(x, z) - \sigma(y, z)
\end{equation}
where $z$ is any special vertex belonging to $[x, \omega] \cap [y, \omega]$. In fact, the value
$h(x, y; \omega)$ is independent of $z$, see e.g. \cite[Proposition 3.3]{mz1}. Figure \ref{fig:3} presents the geometric
interpretation of $h(x, y; \omega)$.

\begin{figure}[ht!]
   \includegraphics[width=20em]{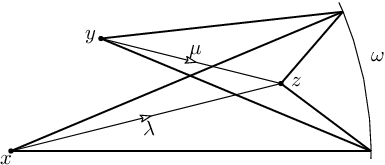}
   \caption{Geometric interpretation of the quantity $h(x,y;\omega)=\lambda-\mu$}
   \label{fig:3}
\end{figure}
In view of the Kostant's convexity theorem, see \cite{Kostant} or \cite[Lemma 3.19]{park2}, if $\sigma(x, y) = \lambda$
then for each $\omega \in \Omega$ and $w \in W$, we have $\lambda - w.h(x, y; \omega) \in Q^+$. In particular, 
\begin{equation}
	\label{eq:29}
	|h(x, y; \omega)| \leqslant |\sigma(x, y)|, \quad\text{and}\quad
	\sprod{h(x, y; \omega)}{\tilde{\rho}} \leqslant \sprod{\sigma(x, y)}{\tilde{\rho}}.
\end{equation}
Moreover, $h$ satisfies a cocycle relation, that is for every $x, y, z \in V_s$ and $\omega \in \Omega$,
\begin{equation}
	\label{eq:35}
	h(x, y; \omega) = h(x, z; \omega) + h(z, y; \omega).
\end{equation}
Fix $x \in V_s$ and $\omega \in \Omega$, and let $\scrA$ be an apartment containing the sector $[x, \omega]$. 
The \emph{retraction} on $\scrA$ with respect to $\omega$ and centered at $x$ is the mapping
$\rho^{x, \omega}_{\scrA}: \scrX \rightarrow \scrA$ defined as follows: If $\gamma$ is any simplex in $\scrX$, there is a
subsector $\scrS$ of $[x, \omega]$ such that $\scrS$ and $\gamma$ are in one apartment $\scrA'$. By the building axiom 
\eqref{en:3:3}, there is an isomorphism $\phi: \scrA' \rightarrow \scrA$ fixing $\scrS$ pointwise. Then
$\rho^{x, \omega}_{\scrA}(\gamma) = \phi(\gamma)$. The definition of $\rho^{x, \omega}_{\scrA}(\gamma)$ is independent of
the apartment $\scrA'$. If $x$ is a special vertex and $\psi: \scrA \rightarrow \Sigma$ is a type-rotating isomorphism
such that $\psi([x, \omega]) = S_0$, then
\[
	h(x, y; \omega) = \psi\big(\rho^{x, \omega}_{\scrA} (y)\big)
\]
for all $y \in V_s$. 

Given $\omega \in \Omega$, let $\Omega'(\omega)$ denote the set of all $\omega' \in \Omega$ which are opposite
to $\omega$. It is an open subset of $\Omega$. Indeed, let $\omega' \in \Omega(\omega)$. Set $\scrA = [\omega', \omega]$. 
We select any special vertex $x \in \scrA$ and take $y \neq x$, $y \in [x, \omega']$. Then $\rho^{y, \omega}_{\scrA}$
restricted to $[y, \omega] \cap [y, \omega'']$ for any $\omega'' \in \Omega(x, y)$, is an isomorphism onto 
$[y, \omega] \cap [y, \omega']$. Hence, there is a unique apartment containing $[x, \omega]$ and $[x, \omega']$, thus 
$\Omega(x, y) \subset \Omega'(\omega)$. The subset $\Omega'(\omega)$ is called the \emph{big cell}~associated with $\omega$.

\begin{lemma}
	\label{lem - residues are closed}
	Let $F$ be a facet  in the spherical building at infinity $\scrX^\infty$. 
	Then the corresponding residue $R(F)$ is closed as a subset of the maximal boundary $\Omega$. 
\end{lemma}
\begin{proof}
	Let $R$ be the residue attached to $F$; it is a proper subset of $\Omega$. Let us show that $\Omega \setminus R$ is open. 
	We pick $\omega \not\in R$. There exists an apartment $\scrA$ whose boundary $\scrA^\infty$ contains $F$ and
	$\omega$. Let $-\omega$ be the chamber in $\scrA^\infty$ which is opposite $\omega$. Since the big cell $\Omega'(-\omega)$ 
	is an open subset containing $\omega$, it is enough to show that $\Omega'(-\omega) \cap R = \varnothing$. Let $\rho$ 
	denote the retraction of the spherical building $\scrX^\infty$ onto its apartment $\scrA^\infty$ centered at $-\omega$. 
	The map $\rho$ preserves the Weyl-distance from $-\omega$ and preserves each adjacency relation in $\scrX^\infty$. The 
	first point implies that $\rho\bigl( \Omega'(-\omega) \bigr) = \{\omega \}$ and the second one implies that $\rho(R)$ is 
	the residue of $F$ in $\scrA^\infty$. As a consequence, since $\omega \not\in R$ we obtain 
	$\Omega'(-\omega) \cap R = \varnothing$, as requested. 
\end{proof}

\subsection{Compactifications}
\label{sec:11}
A \emph{compactification} of the building $\scrX$ is a pair $(\iota, H)$ where $H$ is a  compact second countable
Hausdorff space and $\iota$ is an embedding of all special vertices of $\scrX$ into $H$ such that $\iota(V_s)$ is a discrete
set. Then the closure of the image $\iota(V_s)$ equipped with the induced topology is a compact Hausdorff space. Since $H$
is metrizable, to describe $\clr(\iota(V_s))$ it is sufficient to consider sequences of special vertices $(x_n)$ such that
$(\iota(x_n))$ converges in $H$. If $G$ is a subgroup of the automorphism group of $\scrX$ that acts on $H$, the
compactification is called a \emph{$G$-compactification} if $\iota$ is $G$-equivariant, that is $\iota(g . x) 
= g . \iota(x)$ for all $x \in V_s$ and $g \in G$.

We say that $(\iota_1, H_1)$ \emph{dominates} $(\iota_2, H_2)$ if there is a continuous mapping
$\eta: H_1 \rightarrow H_2$ such that $\iota_2 = \eta \circ \iota_1$. If $\eta$ happens to be a homeomorphism,
$(\iota_1, H_1)$ is said to be isomorphic to $(\iota_2, H_2)$. Lastly, if both are $G$-compactifications then 
$(\iota_1, H_1)$ is $G$-isomorphic to $(\iota_2, H_2)$ provided $\eta$ is $G$-equivariant.

Given two compactifications $(\iota_1, H_1)$ and $(\iota_2, H_2)$ of the building $\scrX$, we can produce another
compactification $(\iota, H_1 \times H_2)$, by setting $\iota(x) = (\iota_1(x), \iota_2(x))$ for $x \in V_s$. It is
denoted as $(\iota_1, H_1) \vee (\iota_2, H_2)$. Notice that $(\iota_1, H_1) \vee (\iota_2, H_2)$ is the smallest
compactification that dominates both $(\iota_1, H_1)$ and $(\iota_2, H_2)$.

To describe the compactifications, we study sequences of good vertices of $\scrX$ approaching infinity. To be precise, 
a sequence $(x_n : n \in \NN)$ of vertices of $\scrX$ \emph{approaches infinity} if for any finite subset
$F \subset V_s$ there is $N \in \NN$, such that for all $n \geqslant N$, $x_n \notin F$. In fact, the way how the sequence
approaches infinity can be refined. A sequence $(x_n : n \in \NN)$ of good vertices of $\scrX$ is a \emph{core 
sequence}, if all have the same type and there are $\omega \in \Omega$ and sequence of good vertices 
$(u_n : n \in \NN_0)$, a subset $J \subsetneq I_0$, possibly empty, and numbers $(c_j : j \in J) \in \frac{1}{2} \NN_0^J$, 
such that
\begin{enumerate}
	\item
	\label{en:4:1} 
	$u_n \in [o, \omega]$;
	\item
	\label{en:4:2}
	for all $m \geqslant n$,
	\[
		\Omega(o, x_m) \subseteq \Omega(o, u_m) \subsetneq \Omega(o, u_n),
	\]
	but
	\[
		\Omega(o, x_m) \not\subseteq \Omega(o, u_{m+1});
	\]
	\item
	\label{en:4:3}
	for all $j \in J$ and $n \in \NN$,
	\[
		\sprod{\sigma(o, u_n)}{\alpha_j} = \sprod{\sigma(o, x_n)}{\alpha_j} = c_j;
	\]
	\item
	\label{en:4:4}
	for all $i \in I_0 \setminus J$,
	\[
		\lim_{n \to \infty} \sprod{\sigma(o, u_n)}{\alpha_i} = \infty.
	\]
\end{enumerate}
In this case $(x_n)$ is often called an $(\omega, J, c)$-{\emph core sequence} (with respect to the origin $o$) and $(u_n)$ 
is called an \emph{auxiliary sequence} for $(x_n)$. Let us observe that if $(x_n) \subset V_g$ then 
$(c_j : j \in J) \subset \NN_0^J$.

\begin{lemma}
	\label{lem:2}
	Any unbounded sequence of good vertices contains a core subsequence.
\end{lemma}
\begin{proof}
	Let $(x_n : n \in \NN)$ be a sequence leaving any compact subset. We set 
	\[
		J = \big\{ i \in I_0 : \limsup_{n \to \infty} \sprod{\sigma(o, x_n)}{\alpha_i} < +\infty \big\}.
	\]
	Since $x_n$ approaches infinity, $J \subsetneq I_0$ and, up to extracting, we can find 
	$(c_j : j \in J) \subset \frac{1}{2} \NN_0^J$ such that
	\begin{enumerate}
		\item for all $j \in J$ and all $n \in \NN$, we have: $\sprod{\sigma(o, x_n)}{\alpha_j} = c_j$,
		\item for all $i \in I_0\setminus J$, we have: 
		$\displaystyle \liminf_{n \to \infty} \sprod{\sigma(o, x_n)}{\alpha_i} = \infty$. 
	\end{enumerate}
	Let $K = \sum_{j \in J} c_j$. For each $m \geqslant 1$, we define the finite set $B_m$ consisting of all special vertices in 
	the vectorial spheres $V_\mu(o)$ for any $\mu \in P^+ \cap (Km \rho - Q^+)$. 
	There is $n_0 \geqslant 1$, such that $x_n \notin B_1$ for all $n \geqslant n_0$. For every $n \geqslant n_0$, there is 
	$v_n^{(1)} \in [o, x_n] \cap B_1$. Since $B_1$ is finite there is $u_1 \in \{v_n^{(1)} : n \geqslant n_0\}$ such that 
	$\{n \geqslant n_0 : v_n^{(1)} = u_1\}$ is infinite. Let $\varphi_1 : \NN \to \NN$ be an extraction such that 
	$u_1 = v^{(1)}_{\varphi_1(n)}$ for all $n \geqslant 1$. We repeat the above procedure for the sequence $(x_{\varphi_1(n)})$
	replacing $B_1$ by $B_2 \setminus B_1$. Now, by the diagonal extraction process, we obtain an extraction 
	$\varphi: \NN \to \NN$ and a sequence $(u_m : m \in \NN)$ such that $\sprod{\sigma(o, u_m)}{\alpha_j} = c_j$ for all 
	$j \in J$ and $m \geqslant 1$, and such that $u_m = v^{(m)}_{\vphi(n)} \in B_m \setminus B_{m-1}$ for all $n \geqslant 1$.

	At this stage, the last conditions \eqref{en:4:3} and \eqref{en:4:4} on core sequences (dealing with vectorial distances) 
	are fulfilled. It remains to extract both in the sequences $(x_{\varphi(n)})$ and $(u_m)$ to fulfill the first and last 
	conditions on shadows in \eqref{en:4:3} of the definition of core sequences. Finally, the sets $\Omega(o, u_m)$ are 
	clopen and decreasing, thus by compactness of $\Omega$, there is $\omega \in \Omega$ such that 
	\[
		\omega \in \bigcap_{m \geqslant 1} \Omega(o, u_m),
	\]
	which provides \eqref{en:4:1}, and the lemma follows.
\end{proof}
To describe the Gromov compactification (Section \ref{sec:5}) and Martin compactification above the bottom
of the spectrum (Section \ref{sec:7.3}), we need a further refinement of the sequences approaching infinity. Let
\[
	\Ss_+^{r-1} = \big\{u \in \Ss^{r-1} : \sprod{u}{\alpha} \geqslant 0, \text{ for all } \alpha \in \Phi^+\big\} 
\]
where $\Ss^{r-1}$ denotes the unite sphere in $\frakA$. A sequence $(x_n : n \in \NN)$ of good vertices of $\scrX$
is an \emph{angular} sequence if there are $\omega \in \Omega$, sequence of good vertices $(u_n : n \in \NN)$ and 
$\theta \in \Ss_+^{r-1}$ such that 
\begin{enumerate}
	\item $u_n \in [o, \omega]$;
	\item for all $m \geqslant n$, 
	\[
		\Omega(o, x_m) \subseteq \Omega(o, u_m) \subsetneq \Omega(o, u_n),
	\]
	but
	\[
		\Omega(o, x_m) \not\subseteq \Omega(o, u_{m+1});
	\]
	\item for all $i \in I_0$, if $\sprod{\theta}{\alpha_i} \neq 0$, then
	\[
		\liminf_{n \to \infty} \sprod{\sigma(o, u_n)}{\alpha_i} = + \infty;
	\]
	\item 
	\[
		\lim_{n \to \infty} \frac{\sigma(o, x_n)}{\norm{\sigma(o, x_n)}} = 
		\theta.
	\]
\end{enumerate}
We often say that $(x_n)$ is $(\omega, \theta)$-sequence. The sequences that are at the same time core and angular are
called angular core sequences. By compactness of $\Ss^{r-1}$ and Lemma \ref{lem:2} we have:
\begin{lemma}
	\label{lem:3}
	Any unbounded sequence of good vertices contains an angular core subsequence. \hfill$\square$
\end{lemma}

\subsection{The algebraic group setting}
\label{s:B1}
In this section we reformulate the previous notions in the situation when the building $\scrX$ is Bruhat--Tits, {\it i.e}
when it is associated with a semisimple algebraic group over a (locally compact) non-Archimedean local field $k$.
Let $k^\circ$ be its ring of integers; we denote by $\varpi$ a uniformizer of $k$, by $\kappa = k^\circ/\varpi k^\circ$
its residue field of $k$ and by ${\rm ord}_k : k^\times \to \mathbb{Z}$ the associated valuation. We let ${\bf G}$ be a 
connected semisimple algebraic group over $k$. Then it follows from Bruhat--Tits theory that the group ${\bf G}(k)$ acts
on an affine building $\scrX = \scrX({\bf G},k)$, see \cite[Section 2]{Tits1977}. The action is well-balanced in the sense
that it is strongly transitive ({\it i.e.} transitive on the inclusions of an alcove in an apartment) and proper
({\it i.e.} facet stabilizers are compact); the action is type-preserving as soon as we assume that ${\bf G}$ is simply
connected. In this context, apartments in the building are in bijective correspondence with maximal $k$-split tori. Let us
fix ${\bf S}$ such a maximal $k$-split torus and let us denote by ${\bf Z}={\bf Z}_{\bf G}({\bf S})$ (resp. by 
${\bf N}={\bf N}_{\bf G}({\bf S})$) its centralizer (resp. its normalizer). The spherical Weyl group, classically defined as
the quotient $W^{\rm sph}= {\bf N}(k) / {\bf Z}(k)$, is also the Weyl group associated with spherical root system
$\Phi^{\rm sph} = \Phi({\bf G},{\bf S})$, defined as generated by reflections, see \cite[Th\'eor\`eme 5.3]{BorelTits65}. 

Let us now recall how the apartment ${\mathcal A} = {\mathcal A}({\bf S},k)$ associated with ${\bf S}$ is constructed
\cite[Sect. 1]{Tits1977}. For any algebraic $k$-group ${\bf H}$, we denote by $X^*({\bf H})$ (resp. by $X_*({\bf H})$) the group
of characters ${\bf H} \to {\rm GL}_1$ (resp. the group of co-characters ${\rm GL}_1 \to {\bf H}$) defined over $k$.
The geometric realization ${\mathcal A}$ of the apartment $\scrA$ is a Euclidean affine space under the real vector space
$V = X_*({\bf Z}) \otimes_{\mathbb Z}{\mathbb R}$ admitting a suitable ${\bf N}(k)$-action
$\xi : {\bf N}(k) \to {\rm Aff}({\mathcal A})$. Roughly speaking, it is constructed as follows. We first define a map
$\xi : {\bf N}(k) \to {\rm Aff}({\mathcal A})$ by duality; namely, for any $z \in {\bf Z}(k)$ the image $\xi(z)$ is 
characterized by
\[
	\chi\bigl( \xi(z) \bigr) = - {\rm ord}_k\bigl( \chi(z) \bigr)
\]
for all $\chi \in X^*({\bf Z})$ where $\chi$ on the left-hand side is seen as a linear form on $V$. The kernel of the map $\xi$
is denoted by $Z_c$: it is the unique maximal compact subgroup of ${\bf N}(k)$ \cite[Proposition 1.2]{Landvogt} (it does not
come from an algebraic subgroup). The quotient group $\Lambda = {\bf Z}(k)/Z_c$ is a free Abelian group of rank equal to
${\rm dim}\,{\bf S} = {\rm dim}\,V$, see \cite[Lemma 1.3]{Landvogt}. Setting $\widetilde W = {\bf N}(k)/Z_c$, we obtain an exact
sequence 
\[
	0 \longrightarrow \Lambda \longrightarrow \widetilde W \longrightarrow W^{\rm sph} \longrightarrow 1. 
\]
The desired ${\bf N}(k)$-action will be via $\widetilde W$. The provisional map $\xi$ obtained so far corresponds to the action
of the translation part $\Lambda$ of $\widetilde W$. More precisely, by a standard pushforward argument
\cite[Proposition 1.6]{Landvogt}, we finally obtain 
\begin{itemize}[label=\tiny$\bullet$]
	\item~an affine space ${\mathcal A}$ with underlying Euclidean vector space $V$, 
	\item~an affine action $\xi : {\bf N}(k) \to {\rm Aff}({\mathcal A})$, 
	\item~a collection of affine linear forms $\Phi^{\rm aff}$ on ${\mathcal A}$, 
	\item~a map $\alpha \mapsto X_\alpha$ attaching to each $\alpha \in \Phi^{\rm aff}$ a subgroup $X_\alpha$ of ${\bf G}(k)$
\end{itemize}
such that 
\begin{enumerate}[label=(\roman*), ref=\roman*]
	\item 
	for any $n \in {\bf N}(k)$ we have $n X_\alpha n^{-1} = X_{\alpha \circ \xi(n)}$,  
	\item 
	the set of vectorial parts of the affine linear forms in $\Phi^{\rm aff}$ is equal to $\Phi^{\rm sph}$, 
	\item 
	for any $a \in \Phi^{\rm sph}$ the subgroups $X_\alpha$, for $\alpha$ of vectorial part equal to $a$, form a filtration of 
	${\bf U}_a(k)$. 
\end{enumerate}
Any affine linear form $\alpha \in \Phi^{\rm aff}$ is called an affine root of ${\bf G}$ over $k$. The zero set $\partial\alpha$
of an affine root $\alpha$ is called a wall and we denote by $r_\alpha$ the Euclidean reflection with respect $\partial\alpha$.
The reflections $r_\alpha$ generate a Euclidean reflection group $W^a$ called the affine Weyl group of $\mathcal A$; it is a 
finite index normal subgroup in $\tilde W$. The root system $\Phi$ we introduce in Section \ref{sec:1} (and its Weyl group $W$)
is related to $\Phi^{\rm sph} $ by the fact that they both provide, up to proportionality, the vectorial parts of the affine
roots in $\Phi^{\rm aff}$, in particular $W^{\rm sph}\simeq W$ (but these two finite root systems are not globally proportional
in general \cite[Section 1.7]{Tits1977}). 

It follows from Borel--Tits theory, see \cite[Theorem 21.15]{BorelBook}, that if we choose a minimal parabolic $k$-subgroup
${\bf P}$ containing ${\bf S}$, the couple $\bigl( {\bf P}(k), {\bf N}(k) \bigr)$ is a BN-pair (or Tits system in Bourbaki's
terminology \cite[IV.2]{Bourbaki2002}) for the group ${\bf G}(k)$; we will use this when going back to the maximal boundary
$\Omega$ and its big cells.  The spherical building at infinity $\scrX^\infty$, which is defined geometrically
\cite[Proposition 1]{Tits1986}, is the building that is naturally associated with this combinatorial structure. To each chamber at
infinity $\omega$ of $\scrX^\infty$ is attached a minimal parabolic $k$-subgroup ${\bf P}_\omega$ such that
${\bf P}_\omega(k) = {\rm Stab}_{{\bf G}(k)}(\omega)$. More generally, if $F$ is a facet in $\scrX^\infty$ there exists a
parabolic $k$-subgroup ${\bf P}_F$ such that ${\bf P}_F(k) = {\rm Stab}_{{\bf G}(k)}(F)$, and all parabolic $k$-subgroups of 
${\bf G}$ are obtained this way. 

\subsubsection{Cartan and Iwasawa decompositions, Busemann functions}
\label{sec:4}
Let $\mathcal A$ be the apartment associated with a maximal $k$-split torus ${\bf S}$ as before, and let $c$ be an alcove in it
whose closure contains a special vertex $x$. The cone with tip $x$ and generated by $c$ is an open Weyl sector in $\mathcal A$,
which we denote $\mathcal S$; its closure $\overline{\mathcal S}$ is a fundamental domain for the action of
${\rm Stab}_{W^a}(x) \simeq W^{\rm sph}$ on $\mathcal A$. We denote by $\omega$ the chamber at infinity represented by
$[x,\omega]$. In order to formulate suitably the Cartan and Iwasawa decompositions with respect to these geometric choices, 
we use the map $\xi : {\bf N}(k) \to {\rm Aff}({\mathcal A})$ giving the affine action of the group ${\bf N}(k)$ on $\mathcal A$
and whose image is isomorphic to $\widetilde W$. We denote by $Y$ the group of all translations contained in $\widetilde W$ and
by $Y^+$ the translations of $Y$ sending $x$ to a vertex in $\overline{\mathcal S}$. 

The Cartan decomposition of ${\bf G}(k)$ with respect to the choices of $x \in \mathcal A$ in ${\mathcal X}$ is the following
partition
\[
	{\bf G}(k) = \bigsqcup_{t \in Y^+} K_x \xi^{-1}(t) K_x
\]
where we use the short notation $K_x = {\rm Stab}_{{\bf G}(k)}(x)$ for the maximal compact subgroup ${\rm Stab}_{{\bf G}(k)}(x)$
in ${\bf G}(k)$. The geometric interpretation of the Cartan decomposition is the fact that a fundamental domain for the
$K_x$-action on the building ${\mathcal X}$ is given by the closed Weyl sector $\overline{\mathcal S}$. 

This can be seen by going back to the problem of describing (at least partially) root group actions on ${\mathcal X}$.
More precisely, we denote by $({\bf U}_a : a \in \Phi^{\rm sph})$ the collection of root groups in ${\bf G}$ with respect to
the maximal $k$-split torus ${\bf S}$ defining ${\mathcal A}$ (these metabelian groups are denoted by ${\bf U}_{(a)}$ in
\cite[Proposition 21.9]{BorelBook} and \cite[Section 5.2]{BorelTits65} but we stick to the notation in \cite{Tits1977}). 
By construction of the action $\xi$ (see Section \ref{s:B1}), the group ${\bf S}(k)$ acts by translations; the question
here is to describe the action of a group ${\bf U}_a(k)$ and for this we use its filtration by the subgroups $X_\alpha$ where
the $\alpha$'s have linear part $a$. If we fix such an affine root $\alpha$, then the compact subgroup $X_\alpha$ of
${\bf U}_a(k)$ fixes the positive half-space $A_\alpha = \alpha^{-1}(\{0 \})$ and folds the other half-apartment of $\mathcal A$
into another apartment in $\mathcal X$. This description is very useful for the interpretation of retractions in terms of
actions of well-chosen unipotent subgroups; retractions centered inside the building are related to affine Bruhat--Tits
decompositions and Cartan decomposition, while retractions centered at a chamber (or, more generally, at a spherical facet)
at infinity are related to Iwasawa decompositions (or, more generally, to horospherical decompositions). 

For the Cartan decomposition with respect to $x \in \mathcal A$ in ${\mathcal X}$, we choose the family of groups $X_\alpha$
geometrically characterized by the condition: $x \in A_\alpha$; in view of the previous description, all such $X_\alpha$'s
are included in $K_x$. In a first step we can even impose the slightly stronger condition that $c \subset A_\alpha$.
Then strong transitivity of the action is illustrated by the fact that any alcove or vertex $y$ in $\mathcal X$ can be sent into
$\mathcal A$ by making act finitely many elements of well chosen subgroups $X_\alpha$ with $c \subset A_\alpha$: this can be
proceeded by an induction whose last step is illustrated in Figure \ref{fig:10}. 

At this stage, we found $b$ fixing $c$ and sending an arbitrary vertex, $y$ say, into $\mathcal A$. We can now work in the
apartment $\mathcal A$ and make $W_x={\rm Stab}_{W^a}(x)$ act: since $x$ was chosen to be special, a fundamental domain for
the $W_x$-action $\mathcal A$ is given by $\overline{\mathcal S}$ and it remains to note that if $n \in {\bf N}(k)$ lifts an
element of $W_x$ which sends $b.y$ into $\overline{\mathcal S}$, then $n \in K_x$. 

\begin{figure}[ht!]
	\includegraphics[width=25em]{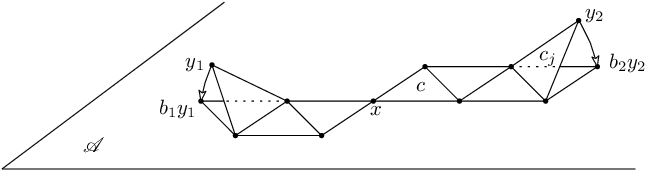}
	\vskip3ex
	\includegraphics[width=25em]{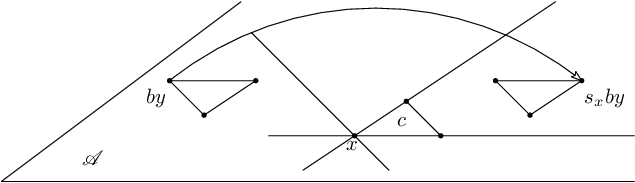}
	\caption{Cartan decomposition: first, fold onto the chosen apartment by root group action
   	(top picture) then use spherical Weyl group action to go to the chosen Weyl sector (bottom picture).}
	\label{fig:10}
\end{figure}

Figure \ref{fig:5} illustrates in one stroke the two steps when the $k$-rank of ${\bf G}$ is equal to $1$ 
({\it i.e.} when $\mathcal X$ is a tree): the effects of retracting with respect to an edge $c$ in a given geodesic, and then
of using (if needed) the symmetry with respect to the chosen vertex $x$. 

\begin{figure}[ht!]
   \includegraphics[width=27em]{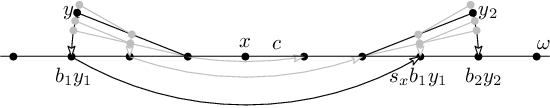}
   \caption{Cartan decomposition in the rank 1 case}
   \label{fig:5}
\end{figure}

The Iwasawa decomposition of ${\bf G}(k)$ with respect to the choices of $[x,\omega] \subset \mathcal A$ in $\scrX$ is
the partition
\[
	{\bf G}(k) = \bigsqcup_{t \in Y} K_x \xi^{-1}(t) {\bf U}^\omega(k)
\]
where ${\bf U}^\omega$ denotes the unipotent radical of the minimal parabolic $k$-subgroup ${\bf P}_\omega$. 

For this decomposition we also have a geometric interpretation but now we have to use a retraction onto the apartment 
$\mathcal A$ and based at $\omega$. The difference is that the collection of groups used to perform the foldings now consists
of all the full root groups ${\bf U}_a(k)$ where $a$ runs over the set of positive roots defined by the choice of the chamber
$\omega$ in the spherical building at infinity $\scrX^\infty$. Concretely, for a positive root $a \in \Phi$ there
is no restriction on the affine root $\alpha$ (with vectorial part $a$) used since the fixed half-apartment can be, so to speak,
arbitrarily close to $\omega$ to fold by induction galleries from an arbitrary alcove into $\mathcal A$. Figure \ref{fig:6}
illustrates an inductive step of such a folding applied to a gallery given by the projection of an arbitrary alcove into 
$\mathcal A$. 

\begin{figure}[ht!]
   \includegraphics[width=27em]{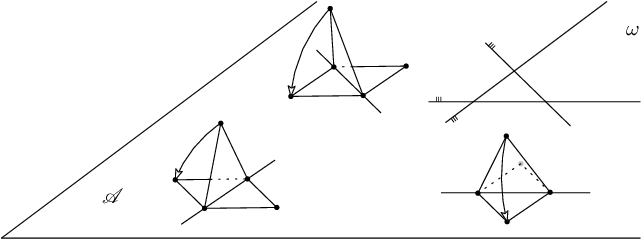}
   \caption{Iwasawa decomposition in general}
   \label{fig:6}
\end{figure}

Figure \ref{fig:7} illustrates the geometric interpretation of the Iwasawa decomposition in the tree case. 

\begin{figure}[ht!]
   \includegraphics[width=27em]{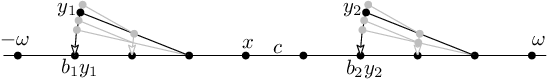}
   \caption{Iwasawa decomposition in the rank 1 case}
   \label{fig:7}
\end{figure}

In order to be complete, we shall explain the connection between these decompositions and the various distances and similar
quantities we use to study the convergence of unbounded sequences of vertices. We keep the geometric choices made before. 
If $y$ is a special vertex, then the Cartan decomposition with respect to $x \in \mathcal A$ says that there exists $k \in K_x$
such that $k.y \in \overline{\mathcal S}$ and this vertex is represented by a positive coweight in the identification between
${\mathcal S} \subset {\mathcal A}$ and $S_0 \subset \mathfrak{a}$: this coweight is $\sigma(x,y)$; it is the non-Archimedean
version of the radial component in bi-invariant harmonic analysis. If $y$ lies in the same ${\bf G}(k)$-orbit as $x$, then there
exists $n \in {\bf Z}(k)$ such that $\xi(n)$ is a translation of $Y^+$ and $k.y = \xi(n).x$; if the ${\bf G}(k)$-action on 
$\mathcal X$ is type preserving, then a vertex $y$ is in the same ${\bf G}(k)$-orbit as $x$ if and only is the vertices have the 
same type. Keeping the special vertex $y$, the Iwasawa decomposition of ${\bf G}(k)$ with respect to
$[x,\omega] \subset \mathcal A$ implies that there exists $u \in {\bf U}^\omega(k)$ such that $u.y \in {\mathcal A}$. The vertex
$u.y$ is represented by an arbitrary coweight in the identification between $[x,\omega] \subset \mathcal A$ and
$S_0 \subset \mathfrak{a}$: this coweight is $h(x,y;\omega)$. If $y$ lies in the same ${\bf G}(k)$-orbit as $x$, then there
exists $n \in {\bf Z}(k)$ such that $\xi(n)$ is a translation of $Y$ and $u.y = \xi(n).x$. 

\subsubsection{The maximal boundary from the algebraic viewpoint}
\label{sec:2}
In this paper, we make intensive use of the maximal boundary $\Omega$ to do analysis on affine buildings, and in particular
to define and study harmonic measures and Furstenberg compactifications of affine buildings. 

First, recall that $\Omega$ was defined in a purely geometric way: it is the set of parallelism classes of sectors, and 
therefore it can be seen also as the set of chambers in the spherical building at infinity $\scrX^\infty$, {\it i.e.} 
$\Omega=C(\scrX^\infty)$. From this viewpoint, the topology on $\Omega$ is defined as the one generated by the shadows 
$\Omega(x,y)$ emanating from a given special vertex $x$ (with $y$ varying). Nevertheless, since the obtained topology does
not depend on the choice of $x$, see \cite[Proposition 3.15]{mz1}, we can see it as generated by all shadows $\Omega(x,y)$ 
where the special vertices $x$ and $y$ both vary in $V_s$.

Going back to group actions, for $g \in {\bf G}(k)$ we have
\[
	y \in [x,\omega] \Longleftrightarrow g.y \in g.[x,\omega] \Longleftrightarrow g.y \in [g.x, g.\omega], 
\]
implying that ${\bf G}(k)$ permutes the shadows and therefore acts continuously on $\Omega$. Since ${\bf G}(k)$ acts strongly
transitively on $\scrX$, so does it on $\scrX^\infty$; in particular, the ${\bf G}(k)$-action on $\Omega$ is transitive. 
As a consequence, if we pick $\omega \in \Omega$ the orbit map of $\omega$ provides a homeomorphism 
\[
	{\bf G}(k)/{\bf P}_\omega(k) \simeq \Omega
\]
where ${\bf P}_\omega$ is the minimal parabolic $k$-subgroup associated with $\omega$ (note that compactness of
${\bf G}(k)/{\bf P}_\omega(k)$ follows from instance from a suitable Iwasawa decomposition -- for more details, please go to 
the group-theoretic, alternative, definition of harmonic measures below in Section \ref{ss - Furst group}). 

As it is well-known from Borel--Tits theory \cite[\S 21]{BorelBook}, the group ${\bf G}(k)$ admits a Bruhat decomposition
\[
	{\bf G}(k) = \bigsqcup_{w \in W^{\rm sph}} {\bf P}(k) w {\bf P}(k)
\]
where ${\bf P}$ is any minimal parabolic $k$-subgroup of ${\bf G}$. In fact, for each $w \in W^{\rm sph}$ the double class 
${\bf P}(k) w {\bf P}(k)$ can be written in a better way, avoiding in particular redundancies. If we pick a maximal $k$-split
torus ${\bf S}$ in ${\bf P}$, we have (positive) root subgroups with respect to ${\bf S}$ included in the unipotent radical
${\bf U}^+ = {\rm rad}_{\rm u}({\bf P})$, and similarly we have (negative) root groups included in the unipotent radical
${\bf U}^-$ of the minimal parabolic subgroup that is opposite ${\bf P}$ with respect to ${\bf S}$. Using this, we have 
${\bf P}(k) w {\bf P}(k) = {\bf U}^+_w(k) w {\bf P}(k)$ with ${\bf U}^+_w = {\bf U}^+ \cap w {\bf U}^- w^{-1}$. Multiplying 
by the longest element $\bar w$ in $W^{\rm sph}$, we obtain the refined Birkhoff decomposition 
\[
	{\bf G}(k) = \bigsqcup_{w \in W^{\rm sph}} {\bf U}^{-,w}(k) w {\bf P}(k)
\]
with ${\bf U}^{-,w} = {\bf U}^- \cap w {\bf U}^- w^{-1}$ for each $w \in W^{\rm sph}$. Using root groups, it is also known
that group multiplication provides an isomorphism of $k$-varieties, see \cite[Proposition 21.9 and Theorem 21.20]{BorelBook},
\[
	\prod_{b \in \Phi^{{\rm sph},-} \cap w\Phi^{{\rm sph},-}} {\bf U}_b \simeq {\bf U}^{-,w}.
\] 
Note that the biggest subgroup ${\bf U}^{-,w}$ is ${\bf U}^-$ and corresponds to $w=1$.

Going back to the maximal boundary, and denoting by $-\omega$ (or ${\rm opp}_{\bf S}(\omega)$ when necessary) the chamber which 
is opposite $\omega$ with respect to the apartment ${\mathcal A}({\bf S})$ corresponding to ${\bf S}$, the isomorphism 
${\bf G}(k)/{\bf P}_{\omega}(k) \simeq \Omega$ and the Birkhoff decomposition of ${\bf G}(k)$ provide a decomposition of
the maximal boundary
\[
	\Omega = \{ -\omega \} \quad \sqcup \quad \bigg( \bigsqcup_{\stackrel{w \in W^{\rm sph}}{w \neq 1, \bar w}}
	{\bf U}^{-,w}(k) w.\omega \bigg) \quad \sqcup \quad {\bf U}^-(k).\omega
\]
which is valid for each $\omega \in \Omega$ and each $k$-split torus ${\bf S}$ such that ${\mathcal A}({\bf S})^\infty$
contains $\omega$. The last subset of the partition is nothing else than the big cell $\Omega'(-\omega)$ of chambers which 
are opposite $-\omega$; more generally, the above partition is indexed by the Weyl-distance of chambers from $-\omega$ in
the spherical building $\scrX^\infty$. The big cell $\Omega'(-\omega)$ is an open neighborhood of $\omega$.
Note that the fact that any residue can be seen as a compact subset of $\Omega$ (this fact in the group-free case will be proved
in Lemma \ref{lem - residues are closed}) admits a more natural proof in the algebraic group context since a residue can be seen 
as an orbit under a well-chosen parabolic subgroup acting on $\Omega$. In fact, this orbit is also the orbit of a Levi factor 
of the parabolic subgroup, and even the orbit of a maximal compact subgroup in the latter Levi factor: this is a useful remark
when dealing with Furstenberg compactifications -- see Section \ref{ss - Furst group} below. 

Some smaller neighborhoods than big cells can be produced thanks to the filtrations on coordinates of the root groups 
${\bf U}_b(k)$. More precisely, we can start from the previous big cell
\[
	\Omega'(-\omega) = {\bf U}^-(k).\omega 
	= \bigg( \prod_{b \in \Phi^{{\rm sph},-}} {\bf U}_b(k) \bigg).\omega \quad 
	\simeq \prod_{b \in \Phi^{{\rm sph},-}} {\bf U}_b(k)
\]
to see that the root groups provide a system of coordinates (note that any root group is isomorphic, as a variety, to an affine
space \cite[ Theorem 21.20 (i)]{BorelBook}). Imposing some valuation conditions on the additive parameters of each root group
still provides some (now compact) open subsets: in other words, we are replacing here each factor $k$ of the coordinate system
by a compact open factor $\varpi^m k^\circ$ for some integer $m \in \mathbb Z$. The choices of the parameters can be made
consistent thanks to geometric considerations. For instance, we can pick a special vertex $x \in {\mathcal A}({\bf S})$.
This gives a sector $[x,\omega]$ and any special vertex $y \in [x,\omega]$ leads to a shadow $\Omega(x,y)$ which, in the above
parametrization of $\Omega'(-\omega)$, corresponds to choosing for each $b \in \Phi^{{\rm sph},-}$ the largest compact open
subgroup of the Bruhat--Tits filtration of ${\bf U}_b(k)$ fixing $y$, see \cite[Section 6.2]{BruhatTits1972}.

\begin{figure}[ht!]
   \begin{center}
  \includegraphics[width=0.3\linewidth]{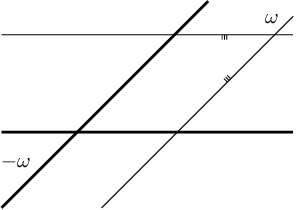}
  \caption{The picture shows shadows as constructed with root group parametrizations.}
  \end{center}
\end{figure}

\section{Affine buildings at infinity}
\label{sec:6} 
In this section, we adopt the geometric viewpoint on affine buildings (e.g., we recall the existence of complete, non-positively
curved metrics) and we introduce various related structures, mainly at infinity. For instance we go back to the classical
definition of the spherical building at infinity of a given affine building (Section \ref{sec:12}) and, less
classically, we introduce some auxiliary affine buildings of smaller rank, called \emph{fa\c cades} (after G.~Rousseau 
\cite{Rousseau2023}). There are fa\c cades of two kinds. The outer fa\c cades are defined thanks to a wide generalization of
the parallelism equivalence relation on sectors: the generalization applies to the family of subsets taken into account
(we use chimneys and not only sectors, see Section \ref{sec:3.2}), and also on the equivalence relations (roughly speaking,
we take into account the value of the Hausdorff distance between these subsets). The inner fa\c cades are (inessential)
sub-buildings that are very useful for technical purposes. The former fa\c cades were recently introduced in order to construct,
in a purely combinatorial way and without any group consideration, the polyhedral compactification of an affine building
\cite{Charignon2009}, while the latter ones had already been introduced by Bruhat--Tits \cite[7.6]{BruhatTits1972} in a
group-theoretic context, which we can eventually get rid of. We conclude this section by understanding maximal boundaries of
fa\c cades in terms of residues in the spherical building at infinity; this is useful when studying limits of harmonic measures
in Section \ref{s - Furst comp}. 

\subsection{Geometric and metric realizations}
\label{sec:12}
In this paper, even though our starting point is a combinatorial definition of buildings (which is well-adapted to our analytic 
arguments), we are sometimes led to using geometric realizations of affine buildings, as well as the associated non-positively 
curved distances. The sets of vertices on which we perform harmonic analysis are discrete subsets of their geometric 
realizations. 

For geometric realizations, we proceed as in J.~Tits' paper on the classification of affine buildings \cite{Tits1986}. 
This allows us to use G.~Rousseau's book \cite{Rousseau2023} in which apartments are by definition Euclidean
affine spaces (in order to directly treat non-discrete buildings). We will also use \cite{Charignon2009} which sticks
to the case we consider, namely locally finite affine buildings. Let us fix the following notational convention: if
$\scrA$ is a (discrete) apartment, then $\mathcal{A}$ denotes its geometric realization; $\mathcal{A}$ is thus a Euclidean
affine space on which the affine Weyl group $W^a$ acts by isometries. Accordingly, we denote by $\mathcal{X}$ the
geometric realization of $\scrX$ and we adopt the same convention for sectors: if $\scrS$ is a sector, we denote by
$\mathcal{S}$ its geometric realization. This additional structure on the geometric realization of each apartment is the
suitable context to define cones in apartments (in the usual real affine sense, {\it i.e.} via stability under $\RR_+$-action by
scalars) and we can also use the notions of vectorial and conical directions in apartments. Each sector $\mathcal{S}$
is a simplicial cone, and so is each of its faces, which we call a \emph{sector face} of $\mathcal{X}$ without
refereeing to any specific sector containing it. 

For any type of building, whenever the model for the apartments has a geometric realization admitting a distance which
is invariant under the Weyl group, the geometric realization of the ambient building can be endowed with a distance which
restricts to the initial one on each apartment \cite[Theorem 10A.4]{Bridson1999}. If we start with a Weyl-invariant
$\cat$-distance on the apartments, we obtain a $\cat$-distance on the building and each type-preserving automorphism
extends to an isometry. This is obviously the case for affine buildings, and actually the $\cat$-distance we will use was
considered from the very beginning of affine building theory \cite[Section 2.5]{BruhatTits1972}. The distance on
$\mathcal{X}$, as for any metric space, leads to the notion of \emph{Hausdorff distance} $d_{\rm H}$ between subsets.
Namely, for $A,B \subset \mathcal{X}$, we set 
\[
	d_{\rm H}(A,B) = \inf \big\{  \varepsilon>0 \, : \; A \subset V_\varepsilon(B) \,\, \hbox{\rm and} \,\, B \subset 
	V_\varepsilon(A) \big\}
\]
where $V_\varepsilon(C)$ denotes the $\varepsilon$-neighborhood $\{ x \in \mathcal{X} : d(x,C) < \varepsilon\}$ of
$C \subset \mathcal{X}$. We say that two subsets in $\mathcal{X}$ are \emph{asymptotic}~if they are at finite Hausdorff
distance from one another; this is an equivalence relation. 

The non-positively curved distance $d$ on an affine building $\mathcal{X}$ provides an intrinsic definition of geodesic
segments between two points, namely: 
\[
	[x,y] = \big\{ z \in \mathcal{X} \, | \, d(x,z)+d(z,y) = d(x,y) \big\}
\]
for any $x,y \in \mathcal{X}$. Geodesic segments themselves are the starting point of convexity arguments. The convex hull
of a subset $Y$ of $\mathcal{X}$ is the smallest subset containing it and stable by taking geodesic segments between points
in $Y$; when $Y$ is contained in an apartment, it is also the intersection of the affine half-spaces contained in the apartment
and containing $Y$. The discrete version of the convex hull in this context was defined in \cite[2.4]{BruhatTits1972}: 
the \emph{enclosure} of a subset $Y$ in $\mathcal{X}$ intersecting an alcove is the smallest subset of $\mathcal{X}$ containing
$Y$ and stable by taking (closures of) minimal galleries between alcoves intersecting $Y$; we denote it by $\mathrm{encl}(Y)$. 
When $Y$ is contained in an apartment, it is also the intersection of the (closed) affine half-spaces contained in the apartment,
bounded by a wall and containing $Y$. The advantage of the latter definition is that it applies to arbitrary subsets of apartments
\cite[Proposition 2.4.5]{BruhatTits1972}. 

Let us denote by $\partial_\infty \mathcal{X}$ the set of equivalence classes of geodesic rays; we call it
the \emph{visual boundary} of $\mathcal{X}$. By \cite[Section 3.2.13]{Rousseau2023}, $\partial_\infty \mathcal{X}$ is a 
geometric realization of the \emph{spherical building at infinity} $\scrX^\infty$. This allows us to see any facet $F$ of 
$\scrX^\infty$ as a set of asymptotic classes of geodesic rays. There is a unique apartment system in 
$\partial_\infty \mathcal{X}$ which consists of the boundaries $\partial_\infty \mathcal{A}$ of the apartments 
$\mathcal{A}$ in $\mathcal{X}$ (recall that we systematically deal with full apartment systems for all the affine buildings 
in this paper). For any cone $\mathcal{C}$ in some apartment $\mathcal{A}$ (e.g. for any sector face in $\mathcal{A}$), 
we denote by $\partial_\infty \mathcal{C}$ the set of asymptotic classes of rays drawn in $\mathcal{C}$.
The set of asymptotic classes of sector faces is the set of facets in the visual boundary $\partial_\infty \mathcal{X}$ 
(see \cite{Tits1986} where the asymptotic equivalence relation is called \emph{parallelism}, or also 
\cite[Section 11.8]{Abramenko2008}). 

\subsection{Sector faces, chimneys and equivalence relations}
\label{sec:3.2}
Let us recall the construction of an affine building associated with each facet in $\partial_\infty \mathcal{X}$. The resulting 
collection of such buildings gives a stratification of the boundary of almost all the compactifications we consider in this 
paper (the exceptions are the Gromov and the Martin
compactifications above the bottom of the spectrum: we explain this in Section \ref{sec:7.3}). The affine buildings at
infinity, indexed by the facets in $\partial_\infty \mathcal{X}$, are called \emph{fa\c cades} in 
\cite[Section 3.3]{Rousseau2023}; they provide an important geometric viewpoint on the study of convergence of core
sequences.

Recall first that a {\it sector face}~in the building $\mathcal{X}$ is a cone $\mathcal{F}$, in some apartment
$\mathcal{A}$, of the form $\mathcal{F}=x+\vec{F}$ where $x \in \mathcal{A}$ and $\vec{F}$ is some face in the
vectorial model of $\mathcal{A}$; we say then that $\vec{F}$ is the \emph{direction}~of the sector face. This notion can
be generalized in the following way: for any facet $\sigma$ in $\mathcal{A}$, the enclosure 
$\mathrm{encl}(\sigma+\vec{F})$ is called the \emph{chimney}~$\mathcal{R}(\sigma, \vec{F})$ \emph{based}~at $\sigma$ and
\emph{directed}~by $\vec{F}$ in $\mathcal{A}$, see \cite[Definition 4.2.2]{Charignon2009}. 

Loosely speaking, taking into account the numerical values of the Hausdorff distance between sector faces (or, more generally,
between cones) in the same asymptotic class leads to a more subtle equivalence relation on cones with a given direction.
Chimneys are technical tools which are used to define a simplicial
structure (eventually, a building structure) on the corresponding set of equivalence classes. 

More precisely, the asymptotic equivalence relation can also be formulated in more combinatorial terms, see
\cite[Definition 4.7.1]{Charignon2009}. Let $\mathcal{F}=x+\vec{F}$ be a sector face. A subsector face $\mathcal{F}'$
of $\mathcal{F}$ ({\it i.e.}, a sector face $\mathcal{F}'$ contained in $\mathcal{F}$) is said to be \emph{full} in 
$\mathcal{F}$ if the sector faces $\mathcal{F}$ and $\mathcal{F}'$ have the same direction. Similarly, but taking into
account the fact that the base of a chimney is now a facet and not a mere point, we say that a subchimney $\mathcal{R}'$
of $\mathcal{R}$ ({\it i.e.}, a chimney $\mathcal{R}'$ contained in $\mathcal{R}$) is \emph{full} in $\mathcal{R}$, if
$\mathcal{R}'$ and $\mathcal{R}$ have the same direction and generate the same affine subspace in any apartment containing
$\mathcal{R}$. We define the \emph{germ (at infinity)}~$\igerm(\mathcal{R})$ of a chimney $\mathcal{R} = 
\mathrm{encl}(\sigma+\vec{F})$: a chimney $\mathcal{R}'$ belongs to $\igerm(\mathcal{R})$ if and only if
$\mathcal{R} \cap \mathcal{R}'$ contains a chimney which is full in both $\mathcal{R}$ and $\mathcal{R}'$. 

\begin{figure}[ht!]
\begin{center}
  \includegraphics[width=0.25\linewidth]{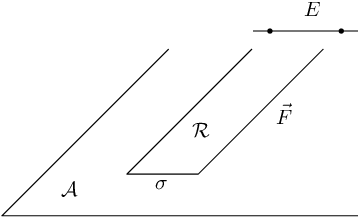}
  \hspace{0.05\linewidth}
  \includegraphics[width=0.25\linewidth]{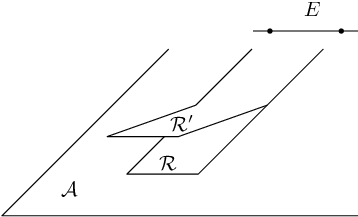}
  \hspace{0.05\linewidth}
  \includegraphics[width=0.30\linewidth]{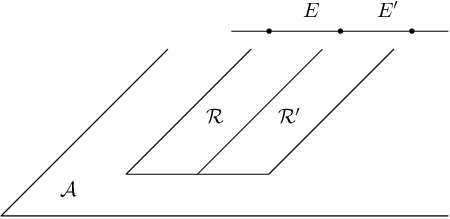}
  \caption{The first figure shows a chimney, the second one shows two equivalent chimneys and the third one illustrates 
  adjacency of chimneys (and of their classes) in the fa\c cade.}
  \label{fig:4}
\end{center}
\end{figure}

\subsection{Affine buildings at infinity, or (outer) fa\c cades}
\label{sec:3.3}
Let us choose $F$ a facet in $\scrX^\infty$. We denote by $\mathcal{X}(F)$ the set all
germs of sector faces in the asymptotic class $F$, that is
\[
	\mathcal{X}(F) = 
	\big\{ \igerm(\mathcal{F}) : \mathcal{F} \text{ sector face such that } \partial_\infty \mathcal{F} = 
	F \big\}.
\]
The set $\mathcal{X}(F)$ admits an affine building structure \cite[Section 3.3.15]{Rousseau2023} where the
apartments are given by apartments $\mathcal{A}$ such that $\partial_\infty \mathcal{A} \supset F$: for such 
an $\mathcal{A}$, the corresponding apartment in $\mathcal{X}(F)$ is 
 \[
 	\mathcal{A}(F) = 
	\big\{ \igerm(\mathcal{F}) : \mathcal{F} \text{ sector face contained in $\mathcal{A}$ and such that } 
	\partial_\infty \mathcal{F} = F \big\}.
\]
The facets are the germs of chimneys whose direction is given by $F$ \cite[Section 2.6]{Ciobotaru2020}. 
Note that the building $\mathcal{X}(F)$ is not contained in $\mathcal{X}$ since it is abstractly constructed out
of germs at infinity of vector faces. Nevertheless there is a natural map 
\begin{alignat*}{1}
	\pi_{F} : \mathcal{X} & \to \mathcal{X}(F) \\
	x & \mapsto \igerm ( x + F)
\end{alignat*}
which is a homomorphism of buildings in the sense of \cite[Section 2.1.13]{Rousseau2023}. 

There is a good compatibility between this construction and buildings at infinity. Let $R(F)$ denote the residue of
$F$ in the spherical building at infinity $\scrX^\infty$. Let $\mathcal{S}$ be a sector one of whose
sector faces represents $F$: this means that $F$ is a facet of $\partial_\infty\mathcal{S}$, or also that
the parallelism class of $\mathcal{S}$, denoted by  $[\mathcal{S}]_{/\!\!/}$, belongs to $R(F)$. The sectors in
$\mathcal{X}(F)$ are the images under the above map $\pi_{F}$ of such sectors \cite[Section 2.6]{Ciobotaru2020}. 
Let $\mathcal{S}$ and $\mathcal{S}'$ be two such sectors. If we assume in addition that they are parallel, there is a subsector
$\mathcal{S}''$ in $\mathcal{S} \cap \mathcal{S}'$. Then $\pi_{F}(\mathcal{S}'')$ is a common subsector of the sectors
$\pi_{F}(\mathcal{S})$ and $\pi_{F}(\mathcal{S}')$ in $\mathcal{X}(F)$, so that 
$\pi_{F}(\mathcal{S})$ and $\pi_{F}(\mathcal{S}')$ are parallel. This allows us to define the surjective map 
\begin{alignat*}{1}
	\varphi_{F} : R(F) & \to \Omega_{\mathcal{X}(F)} \\
	[\mathcal{S}]_{/\!\!/} & \mapsto [\pi_{F}(\mathcal{S})]_{/\!\!/}
\end{alignat*}
where the notation $[\cdot]_{/\!\!/}$ corresponds to taking the relevant parallelism class on each side of the map. 
If $J$ is the type of $F$, the maximal boundary $\Omega_{\mathcal{X}(F)}$ is the residue $\res_J(c_\infty)$ where $c_\infty$ 
is any chamber at infinity whose closure contains $F$. 

We can also put a building structure on a well-chosen subset of $\mathcal{X}$, making it a sub-building in the sense of 
\cite[Section 2.1.15]{Rousseau2023}. This requires to choose an apartment $\calA$ for which 
$F \subset \partial_\infty \calA$. We assume that $o \in \calA$. We also fix a facet $F'$ opposite $F$
in $\calA$ (which we may write $F' = {\mathrm{opp}}_{\calA}(F)$ sometimes). We consider the subset of apartments
\[
	\mathcal{A}(F,F') = 
	\big\{ \mathcal{A} \in \mathscr{A} : \partial_\infty \mathcal{A} \supset F \cup F' \big\}
\]
and we set 
\[
	\mathcal{X}(F,F') = \bigcup_{\mathcal{A}\in \mathcal{A}(F,F')} \mathcal{A}.
\]
Then $\mathcal{X}(F,F')$ is an affine building for which $\mathcal{A}(F,F')$ is a system of apartments 
\cite[Theorem 3.3.14.1]{Rousseau2023}. In general, this building is \emph{non-essential}, meaning that the smallest vectorial 
facets in any apartment are of positive dimension. Such a facet is contained in the Euclidean factor acted upon trivially when 
decomposing the affine Weyl group action on an apartment (this corresponds to the Euclidean factor in the de Rham decomposition 
of Riemannian symmetric spaces). Note that there always exists an \emph{essentialization map} factoring out the trivial Euclidean
factor of each apartment \cite[Section 2.1.12]{Rousseau2023}. The sub-building $\mathcal{X}(F,F')$ is called the 
\emph{inner fa\c cade} associated with $F$ and $F'$ \cite[Section 3.3.14]{Rousseau2023}. The restricted map
$\pi_{F}|_{\mathcal{X}(F,F')}$ is the essentialization map of $\mathcal{X}(F,F')$: see also \cite[Theorem 2.8]{Ciobotaru2020}, 
where more precise statements can be found.

Let us see what the concrete counterpart of these constructions is, at the level of apartments; this will be useful when
studying the convergence of core sequences for some compactifications. We work in the Euclidean space $\mathfrak{a}$ that
is the vectorial model of the apartments in $\mathcal{X}$, hence in $\scrX$ (see Section \ref{sec:1}). Given
$J \subsetneq I_0$, let $\Phi_J$ to be the set of roots $\alpha \in \Phi$ such that $\sprod{\alpha}{\lambda_k} = 0$ if
$k \notin J$. Let $\tilde{\alpha}_0$ be the highest root in $\Phi_J$. In $\mathfrak{a}$, we consider the subcollection of
walls $\calH_J$ consisting of all hyperplanes $H_{j; k}$ for $j \in J$ and $k \in \ZZ$, together with
\[
	H_{\tilde{\alpha}_0; k} = \big\{x \in \mathfrak{a} : \sprod{x}{\tilde{\alpha}_0} = k\big\}
	\quad\text{for all } k \in \ZZ.
\]
Let $\tilde{r}_0$ be the orthogonal reflection with respect to $H_{\tilde{\alpha}_0; 1}$. We denote by $W_J$ and
$W^a_J$ the subgroups of $W$ and $W^a$ generated by $\{r_j : j \in J\}$ and $\{\tilde{r}_0 \} \cup \{r_j : j \in J\}$,
respectively. Let $C_J(\Sigma)$ be the family of all open connected components of
$\mathfrak{a} \setminus \bigcup_{H \in \calH_J} H$. Since the group $W_J^a$ acts transitively on $C_J(\Sigma)$, we can
turn it into a chamber system. Let $\Sigma_J$ denote the resulting abstract simplicial complex. 
The space $\mathfrak{a}$ is an inessential realization of $\Sigma_J$ in the sense of \cite[Chapter V]{Bourbaki2002}. 
Since $\calH_J \subset \calH$, we can see $\Sigma_J$ as a subcomplex of $\Sigma$. In particular, each half apartment in
$\Sigma_J$ is a half apartment in $\Sigma$. For each $(k_j : j \in J) \in \ZZ^J$, the set
\[
	\big\{x \in \mathfrak{a} : \sprod{x}{\alpha_j} = k_j \text{ for all } j \in J \big\}
\]
is called a $J$-vertex. The fundamental $J$-sector is
\[
	S^J = \big\{x \in \mathfrak{a} : \sprod{x}{\alpha_j} \geqslant 0 \text{ for all } j \in J \big\}.
\]
All $J$-sectors in $\Sigma_J$ are of the form $w . S^J$ for some $w \in W_J^a$. 

In order to obtain from $\mathfrak{a}$ an essential realization of $\Sigma_J$ we need to introduce the orthonormal
projection $\pi_J$ onto $\mathfrak{a}_J = \bigoplus_{j \in J} \mathbb{R} \alpha_j$ which is a direct factor in the
orthogonal decomposition $\mathfrak{a} = \mathfrak{a}_J \oplus \bigoplus_{i \in I_0 \setminus J} \mathbb{R} \lambda_i$. 
For $x \in \mathfrak{a}$, we have 
\[
	\pi_J(x) = P_J(x) = \sum_{j \in J} \sprod{x}{\alpha_j} P_J(\lambda_j)
\]
where
\begin{equation}
	\label{eq:77}
	P_J(x) = \frac{1}{|W_J|} \sum_{w \in W_J} \big(x - w . x\big).
\end{equation}
We also set $Q_J={\rm id}-P_J$. To see that $P_J$ indeed performs the orthogonal projection onto $\mathfrak{a}_J$,
note first that for each $j \in J$ and each $i \in I_0 \setminus J$ we have $s_j . \lambda_i=\lambda_i$, hence
$w . x = x$ for each $w \in W_J$ and each $x \in \bigoplus_{i \in I_0 \setminus J} \mathbb{R} \lambda_i$; moreover, 
the fact that $P_J$ acts as the identity on $\mathfrak{a}_J$ comes from the irreducibility of the standard linear representation
of an irreducible Coxeter system. 

To sum up, when $J$ varies over the proper subsets of $I_0$, the Euclidean spaces $\mathfrak{a}$ endowed with the
subcollections of walls $\calH_J$ are models for the apartments in the inner fa\c cades $\mathcal{X}(F,F')$
associated with opposite facets at infinity; after projection onto $\mathfrak{a}_J$, we obtain the models for the apartments
of the fa\c cades at infinity $\mathcal{X}(F)$ (or of the essentializations of inner fa\c cades). 

\begin{remark}
\label{rk - point at infinity for core} 
	Let $(x_n)$ be an $(\omega, J, c)$-core sequence with auxiliary sequence $(u_n)$. This sequence obviously defines a spherical
	facet at infinity, say $F$, namely the facet attached to the $J$-residue of $\omega$ in the spherical building at infinity 
	$\scrX^\infty$. Therefore we have a projection $\pi_F : \mathcal X \to \mathcal{X}(F)$. Let $\mathcal A$ be an apartment 
	containing the sector $[o,\omega]$. Then, in view of the above description of the restriction of $\pi_F$ to $\mathcal A$ and 
	by definition of a core sequence, we see that the sequence $(\pi_F(u_n) : n \in \NN)$ is constant; we denote by $x_F$ its 
	unique value. The facet $F$ is called the \emph{spherical facet} associated to the core sequence $(x_n)$ and the point $x_F$ 
	of $\mathcal{X}(F)$ is called the \emph{vertex at infinity} associated to $(x_n)$. 
\end{remark}

\subsection{Polyhedral compactification, after Rousseau}
\label{ss - polyhedral compactification}
In this section we introduce a further compactification procedure which we will use as a tool and which has the advantage to 
be well-adapted to projections to outer fa\c cades, so that the latter buildings naturally appear as strata at infinity. 
Apart from Lemma \ref{lem:core CV in poly} below, what follows is a summary of \cite[Section 3.4]{Rousseau2023} 
(to which we refer for more details). 

First of all, recall that in any complete, proper, ${\rm CAT}(0)$ metric space, the nearest point projection onto any given 
convex subset is a $1$-Lipschitz map \cite[Chapter II.2]{Bridson1999}. This fact is the starting point to define 
compactifications at this level of generality. If $Z$ is such a space endowed with an increasing sequence 
$( B_k )_{k \geqslant 0}$ of compact convex subsets such that $Z = \bigcup_k B_k$, and if one denotes by ${\rm pr}_k$ the nearest
point projection onto $B_k$, then the projective system $(B_k, {\rm pr}_k)$ provides a compactification 
$\overline Z = \varprojlim_{k \geqslant 0} B_k$. When $( B_k )_{k \geqslant 0}$ is given by a sequence of balls centered at a 
common point $z$ and of radii $r_k \to +\infty$ (for the given, \emph{i.e.} numerical, distance), the resulting compactification 
is the horospherical one \cite[Chapter II.8, Definition 8.5 and Theorem 8.13]{Bridson1999}.

In the case of affine buildings, Rousseau suggests to use different compact exhaustions $( B_k )_{k \geqslant 0}$, replacing
metric balls by balls defined in terms of vectorial distance. 

\subsubsection{Vectorial balls}
For a point $z$ in $\mathcal X$ and for a vector $\xi$ in the model $S_0$ of sectors, we introduce the 
\emph{vectorial ball} $B^v(z,\xi)$ of center $z$ and of vectorial radius $\xi$ by setting 
\[
	B^v(z,\xi) = \{ y \in \mathcal{X} : \sigma(z,y) \preccurlyeq \xi \},
\]
where $\sigma(z,y) \preccurlyeq \xi$ means that the vector $\xi - \sigma(z,y)$ belongs to the cone 
$\bigoplus_{i \in I_0} \RR_+ \alpha_i$ which dual to $S_0$.
Vectorial balls are compact and convex \cite[Lemma 3.4.3.1]{Rousseau2023} and the intersection of a vectorial ball with an 
apartment containing its center can be described precisely. Indeed, for each apartment $\mathcal A$ containing $z$, 
the (topological) boundary $\mathcal P$ of $\mathcal A \cap B^v(z,\xi)$ is an explicit polyhedron
\cite[Proposition 3.4.1.1]{Rousseau2023}: each vectorial face $F$ of the partition of $\mathcal A$ into Weyl chambers gives rise 
to a face of $\mathcal P$ which is orthogonal to $F$; this parametrizes bijectively the faces of the boundary of 
$\mathcal A \cap B^v(z,\xi)$. For instance, the faces corresponding to Weyl sectors are the vertices of $\mathcal P$ and 
$\mathcal A \cap B^v(z,\xi)$ is the convex hull of these vertices. 

Moreover the nearest point projection onto $B^v(z,\xi)$, restricted to any apartment containing the center $z$, can be described 
explicitly: the preimage of a point $u$ in the boundary $\mathcal P$, more precisely in the face determined by (and orthogonal 
to) a vectorial face $F$ say, consists of the cone $u+F$ \cite[Lemma 3.4.2.2]{Rousseau2023}.

\subsubsection{Polyhedral compactification}
Keeping $z$ and $\xi$ as above, we can choose in addition an  increasing sequence of real numbers $r_k \geqslant 1$, with 
$r_k \to +\infty$, and consider the dilated vectorial balls $B_k = B^v(z,r_k \xi)$. The \emph{polyhedral compactification}~of 
the building is then $\overline{\mathcal{X}}^{\rm pol} = \varprojlim_{k \geqslant 0} B_k$, so that a point $\eta$ in 
$\overline{\mathcal{X}}^{\rm pol}$ is a sequence $(z_k)$ such that $z_k \in B_k$ and ${\rm pr}_k(z_{k+m})=z_k$ for all 
$k,m \geqslant 0$; we denote $z_k = {\rm pr}_k(\eta)$. The projective limit topology on $\overline{\mathcal{X}}^{\rm pol}$ 
is Hausdorff and compact \cite[Theorem 3.4.4.4]{Rousseau2023} and it does not depend on the choice of the center $z$, 
the (regular) vectorial radius $\xi$ or the increasing sequence of real numbers $(r_k)$ going to infinity: this is seen by 
comparing the obtained projective limit compactifications with another one, defined more intrinsically 
\cite[Proposition 3.4.6.3]{Rousseau2023}. 

At last, for each spherical facet in the spherical building at infinity $\scrX^\infty$, there is a map $\iota_F$ embedding the 
outer fa\c cade $\mathcal X(F)$ into $\overline{\mathcal{X}}^{\rm pol}$: recall that a point $\eta$  in $\mathcal X(F)$ is 
the germ at infinity of a cone of direction $F$; then the image $\iota(\eta)$ is the sequence 
$\bigl( {\rm pr}_k (\eta) \bigr)_{k \geqslant 0}$ where, by definition, the point ${\rm pr}_k (\eta)$ in the boundary 
$\partial B_k$ is the image by the nearest point projection ${\rm pr}_k$ of a sufficiently small cone representing the germ 
$\eta$ \cite[\S 3.4.4.3]{Rousseau2023}. It turns out that $\overline{\mathcal{X}}^{\rm pol}$ is the disjoint union of 
the building and of the images of the outer fa\c cades $\mathcal X(F)$ by $\iota_F$ when $F$ varies over the spherical facets 
at infinity \cite[Theorem 3.4.4.4]{Rousseau2023}. 

\subsubsection{Convergence of core sequences in the polyhedral compactification}
Recall that an $(\omega, J, c)$-core sequence $(x_n : n \in \NN)$ with auxiliary sequence $(u_n : n \in \NN)$ defines a 
spherical facet at infinity $F$ of type $J$ and a vertex $x_F$ in the outer fa\c cade $\mathcal X(F)$ 
(Remark \ref{rk - point at infinity for core}). 

\begin{lemma}
	\label{lem:core CV in poly}	
	Let $(x_n)$ be an $(\omega, J, c)$-core sequence with auxiliary sequence $(u_n)$. Let $F$ be the associated spherical 
	facet in the building at infinity $\scrX^\infty$ and let $x_F$ be the associated point at infinity. 
	Then, in the polyhedral compactification, we have 
	\[
		\lim_{n \to \infty} x_n= x_F. 
	\]
\end{lemma}
\begin{proof}
	We first show that in $\overline{\mathcal{X}}^{\rm pol}$ we have $\displaystyle \lim_{n \to \infty} u_n= x_F$, by arguing 
	in an apartment $\mathcal{A}$ containing the sector $[o,\omega]$. In view of the definition of a projective limit, it
	amounts to showing that for each $k \geqslant 0$ we have
	\[
		\lim_{n \to \infty} {\rm pr}_k(u_n) = {\rm pr}_k(x_F).
	\]
	It is enough to check this for large enough $k$. We consider an index $k$ such that the face $\Sigma_k^{\perp F}$ of 
	the polyhedron $\partial B_k$ which is orthogonal to $F$ contains a point $u$ such that $\alpha_j(u) \geqslant c_j$ for 
	each $j \in J$. On the one hand, when $n$ is large enough, we have $u_n \not\in B_k$ and by the above description of 
	the fibers of ${\rm pr}_k$ in $\mathcal A$, the point ${\rm pr}_k(u_n)$ is the unique point $s_k \in \Sigma_k^{\perp F}$ 
	such that $\alpha_j(s_k)=c_j$ for each $j \in J$. On the other hand, the value of ${\rm pr}_k$ at the point at infinity 
	$x_F$ is the (unique) value that ${\rm pr}_k$ takes on the cone $u_n + F$ for $n$ large enough. 
	The implies that for $n$ large enough, we have ${\rm pr}_k(u_n) = {\rm pr}_k(x_F)$, hence the desired convergence. 

	Now we consider the $(\omega, J, c)$-core sequence $(x_n)$ itself. Let $k$ be again a large enough index (in the above sense)
	and let $n$ be large enough too, so that $u_n \not\in B_k$. We pick an apartment $\mathcal{A}'$ containing the origin $o$ 
	and $x_n$. Since $\Omega(o,x_n) \subseteq \Omega(o, u_n)$, the point $u_n$ lies in the convex hull of $o$ and $x_n$ hence 
	in $\mathcal A'$. Therefore we can argue as in the previous paragraph, using the apartment $\mathcal A'$ instead of 
	$\mathcal A$. This implies that ${\rm pr}_k(x_n)$ and ${\rm pr}_k(u_n)$ are both equal to the unique point 
	$s_k \in \Sigma_k^{\perp F}$ such that $\alpha_j(s_k)=c_j$ for each $j \in J$, hence the announced convergence. 
\end{proof}

\subsection{Some lemmas for convergence studies}
\label{ss - lemmas for CV}
We finish with two lemmas that will be useful while studying convergence of core sequences. 

\begin{lemma}
	\label{lem:1}
	Let $F$ be a facet in the building at infinity $\scrX^\infty$. 
	Then the map
	\begin{alignat*}{1}
		\varphi_{F} : R(F) & \to \Omega_{\mathcal{X}(F)} \\
		[\mathcal{S}]_{/\!\!/} & \mapsto [\pi_{F}(\mathcal{S})]_{/\!\!/}
	\end{alignat*}
	is a homeomorphism.
\end{lemma}
\begin{proof}
	We already know that the map $\varphi_{F}$ is surjective since it is obtained from factorizing by parallelism
	equivalence relations (at the levels of the source and of the target) the map obtained from $\pi_{F}$ sending
	surjectively the set of sectors in ${\mathcal X}$ containing a sector face directed by $F$ to the set of sectors in 
	${\mathcal X}(F)$. Now, let ${\mathcal S}$ and ${\mathcal S}'$ be sectors both containing a sector face directed by $F$. 
	here are subsectors ${\mathcal T}$ and ${\mathcal T}'$ of ${\mathcal S}$ and ${\mathcal S}'$ respectively, contained in 
	the same apartment ${\mathcal A}$ and both containing a sector face directed by $F$. We assume that ${\mathcal S}$ and 
	${\mathcal S}'$ are not parallel to one another. Then neither are ${\mathcal T}$ and ${\mathcal T}'$ in ${\mathcal A}$, so 
	there exists $w$ in the spherical Weyl group, non-trivial but stabilizing $F$, sending ${\mathcal T}$ to
	${\mathcal T}'$. After projecting by $\pi_{F}$ this provides two sectors in the apartment 
	$\pi_{F}({\mathcal A})$ which are deduced from one another by the action of a non-trivial element of the Weyl group of 
	${\mathcal X}(F)$, hence not parallel to one another. This proves the injectivity of $\varphi_{F}$. 
	
	To finish the proof, we use compactness of the source $R(F)$ of $\varphi_{F}$ (given by Lemma 
	\ref{lem - residues are closed}). Therefore, in order to conclude that $\varphi_{F}$ is a homeomorphism, it is enough to 
	show that it is continuous. We pick $x_\infty,y_\infty$ in $\mathcal{X}(F) \cap \pi_{F}(V_s)$ and 
	$\omega$ in $\varphi_{F}^{-1}\bigl( \Omega_{\mathcal{X}(F)}(x_\infty,y_\infty) \bigr)$, where
	$\Omega_{\mathcal{X}(F)}(x_\infty,y_\infty)$ is the shadow in the maximal boundary 
	$\Omega_{\mathcal{X}(F)}$ of $\mathcal{X}(F)$ defined by $x_\infty$ and $y_\infty$. By definition of a shadow, we have
	$y_\infty \in [x_\infty,\varphi_{F}(\omega)]$. The sector $[x_\infty,\varphi_{F}(\omega)]$ is contained in
	an apartment of the fa\c cade $\mathcal{X}(F)$, which we can write $\pi_{F}(\mathcal{A})$ for an apartment 
	${\mathcal A}$ in ${\mathcal X}$ such that $\partial_\infty {\mathcal A}$ contains $F$. 
	Let $x \in {\mathcal A} \cap V_s$ be such that $\pi_{F}(x)=x_\infty$. In ${\mathcal A}$, there exists a sector tipped 
	at $x$ whose image by $\pi_{F}$ is in the parallelism class $\varphi_{F}(\omega)$. By injectivity of 
	$\varphi_{F}$ this sector represents $\omega$ so we can write $[x_\infty,\varphi_{F}(\omega)] = 
	\pi_{F}([x,\omega])$. Now, we choose $y \in [x,\omega] \cap V_s$ such that $\pi_{F}(y)=y_\infty$ and we
	consider the shadow $\Omega(x,y)$. Let $\omega' \in R(F) \cap \Omega(x,y)$. We have $y \in [x,\omega']$, so applying
	$\pi_{F}$ we obtain $y_\infty \in [x_\infty,\varphi_{F}(\omega')]$. This shows that the open neighborhood 
	$\Omega(x,y)$ of $\omega$ is contained in
	$\varphi_{F}^{-1}\bigl( \Omega_{\mathcal{X}(F)}(x_\infty,y_\infty) \bigr)$. This proves the continuity of 
	$\varphi_{F}$ since $\omega$ was picked arbitrarily in the preimage
	$\varphi_{F}^{-1}\bigl( \Omega_{\mathcal{X}(F)}(x_\infty,y_\infty) \bigr)$. 
\end{proof}

\begin{lemma}
	\label{lem:6}
	Let $F$ be a facet in the building at infinity $\scrX^\infty$. Let $J$ be its type, $J \subsetneq I_0$.
	\begin{enumerate}[label=(\roman*), ref=\roman*]
		\item
		\label{en:7:1}
		For all $x,x' \in \mathcal{X}$ with $R(F) \cap \Omega(x, x') \neq \varnothing$, we have
		\[
			\sigma_{\mathcal{X}(F)}  \bigl( \pi_F(x),\pi_F(x') \bigr) = P_J \bigl( \sigma(x,x') \bigr)
		\]
		where $\sigma_{\mathcal{X}(F)} $ denotes the vectorial distance of the outer fa\c cade $\mathcal{X}(F)$. 
		\item 
		\label{en:7:2}
		For all $x,x' \in \mathcal{X}$ and each $\omega \in R(F)$, we have
		\[
			h_{\mathcal{X}(F)} \big(\pi_F(x), \pi_F(x'); \vphi_{F}(\omega)\big) = P_J\big(h(x, x'; \omega)\big)
		\]
		where $h_{\mathcal{X}(F)} $ denotes the Busemann function of the outer fa\c cade $\mathcal{X}(F)$. 
	\end{enumerate}
\end{lemma}
\noindent
In the rest of the paper we are mainly interested in outer fa\c cades in order to understand boundaries of various 
compactifications. This is why we did not introduce the retraction $\rho : \mathcal{X} \to \mathcal{X}(F,F')$ above; 
it is defined carefully in \cite[Section 3.3.17]{Rousseau2023}. We use this reference in the proof below. 
\begin{proof}
	In order to prove \eqref{en:7:1}, we pick $x,x' \in \mathcal{X}$ and $\omega \in R(F) \cap \Omega(x, x')$.
	We have $x' \in [x, \omega]$ and the sector $[x, \omega]$ contains a face whose direction is given by $F$.
	There exists an apartment, say $\mathcal{A}$, containing the sector $[x, \omega]$. The apartment $\mathcal{A}$ contains 
	a facet $F'$ which is opposite $F$. We denote by $\mathcal{X}(F,F')$ the inner fa\c cade defined by the couple $(F,F')$: 
	by construction, the apartment $\mathcal{A}$ is an apartment of $\mathcal{X}(F,F')$ so that the retraction 
	$\rho : \mathcal{X} \to \mathcal{X}(F,F')$ induces the identity map on it. The result follows then from the fact that 
	$\pi_F$ is the composition of $\rho$ with the essentialization of map \cite[\S 3.3.17]{Rousseau2023}, and the latter map 
	is performed by $P_J$ on the model of the apartments of $\mathcal{X}$. 

	To show \eqref{en:7:2}, we pick $x,x' \in \mathcal{X}$ and $\omega \in R(F)$. Let $z \in [x,\omega] \cap [x', \omega]$. 
	The point $z$ is chosen in order to write $h(x, x'; \omega) = \sigma(x,z) - \sigma(x',z)$. By the very choice of $z$, 
	we have $\omega \in \Omega(x,z) \cap R(F)$, therefore by \eqref{en:7:1} we have 
	\[
		\sigma_{\mathcal{X}(F)} \bigl( \pi_F(x),\pi_F(z) \bigr) = P_J \bigl( \sigma(x,z) \bigr).
	\]
	Similarly, by replacing $x$ by $x'$ we obtain 
	\[
		\sigma_{\mathcal{X}(F)}  \bigl( \pi_F(x'),\pi_F(z) \bigr) = P_J \bigl( \sigma(x',z) \bigr).
	\]
	Now $\pi_F\big( [x,\omega] \big)$ is a sector in $\mathcal{X}(F)$; more precisely the sector 
	$[\pi_F(x), \varphi_F(\omega)]$. Moreover, 
	\[
		\pi_F(z) \in [\pi_F(x), \varphi_F(\omega)].
	\]
	Similarly by replacing $x$ by $x'$ we obtain
	\[
		\pi_F(z) \in [\pi_F(x'), \varphi_F(\omega)].
	\]
	Therefore, $\pi_F(z) \in [\pi_F(x), \varphi_F(\omega)] \cap [\pi_F(x'), \varphi_F(\omega)]$, so that we can compute
	\[
		h_{\mathcal{X}(F)} \big(\pi_F(x), \pi_F(x'); \vphi_{F}(\omega)\big) 
		= \sigma_{\mathcal{X}(F)}  \bigl( \pi_F(x),\pi_F(z) \bigr) - \sigma_{\mathcal{X}(F)}  \bigl( \pi_F(x'),\pi_F(z) \bigr)
	\]
	which is equal to $P_J \bigl( \sigma(x,z) \bigr) - P_J \bigl( \sigma(x',z) \bigr)$ by the previous arguments. 
	Finally, by linearity of $P_J$ we obtain 
	\[
		h_{\mathcal{X}(F)} \big(\pi_F(x), \pi_F(x'); \vphi_{F}(\omega)\big) 
		= P_J \big( \sigma(x,z) - \sigma(x',z) \big)
		= P_J\big(h(x, x'; \omega)\big),
	\]
	as requested. 
\end{proof}

We finish with a lemma relating core sequences and fa\c cades at infinity. In the following result, we use the fact that 
the polyhedral compactification, when defined as the projective limit of an increasing exhaustion by convex compact subsets, 
does not depend on the point at which the convex compact subsets are centered. 
\begin{lemma}
	\label{lem:4}
	Let $(x_n : n \in \NN)$ be an $(\omega, J, c)$-core sequence with auxiliary sequence $(u_n : n \in \NN)$. 
	Let $F$ be the associated spherical facet in the building at infinity $\scrX^\infty$.
	Then the sequence $(\pi_F(u_n) : n \in \NN)$ is constant in the outer fa\c cade $\mathcal{X}(F)$. Let $x_F$ be its
	value. Then for each $z \in V_g$, there is an extraction $\varphi : \NN \to \NN$ such that
	\[
		\sigma_{\mathcal{X}(F)} \bigl( \pi_F(z), x_F \bigr) = P_J \bigl( \sigma(z, x_{\varphi(n)}) \bigr)
	\]
	for each $n \in \NN$.
\end{lemma}
\begin{proof}
	By Lemma \ref{lem:6}\eqref{en:7:1} we have 
	$P_J \bigl( \sigma(o, u_n) \bigr) = \sigma_{\mathcal{X}(F)} \bigl( \pi_F(o), \pi_F(u_n) \bigr)$. 
	Moreover, by definition of a core sequence, the auxiliary sequence $(u_n)$ satisfies 
	$P_J \bigl( \sigma(o, u_n) \bigr) = P_J \bigl( \sigma(o, x_n) \bigr)$. Therefore for all $n$ we have 
	\[
		\sigma_{\mathcal{X}(F)} (\pi_F(o), x_F) = P_J \bigl( \sigma(o, x_n) \bigr).
	\]
	Now, choosing the vertex $z \in V_g$ as a new origin, by Lemma \ref{lem:2} we have an extraction $\varphi$ and parameters 
	$(\omega', J', c')$ such that $(x_{\varphi(n)})$ is an $(\omega', J', c')$-core sequence 
	with respect to $z$. The argument of the previous paragraph shows that for all $n$ we have
	\[
		\sigma_{\mathcal{X}(F')} (\pi_{F'}(z), x_{F'}) = P_{J'} \bigl( \sigma(z, x_{\varphi(n)}) \bigr)
	\]
	where $F'$ is the spherical facet attached to the parameters $(\omega', J', c')$ and $x_{F'}$ is the corresponding point 
	in the fa\c cade at infinity $\mathcal{X}(F')$. By Lemma \ref{lem:core CV in poly}, the point $x_F$ is the limit of $(x_n)$ 
	in the polyhedral compactification endowed with the projective limit topology provided by vectorial balls centered at $o$; 
	similarly $x_{F'}$ is the limit of the subsequence $(x_{\varphi(n)})$ in the polyhedral compactification endowed with the 
	projective limit topology provided by vectorial balls centered at $z$. But these two topologies coincide 
	\cite[Proposition 3.4.6.3]{Rousseau2023} and are Hausdorff \cite[\S 3.4.4.2]{Rousseau2023}, so $x_F = x_{F'}$. 
	By \cite[\S 3.4.4]{Rousseau2023}, this implies that $F=F'$ and $J=J'$, hence the result by the second displayed formula. 
\end{proof}

\begin{lemma}
	\label{lem:9}
	Let $(x_n)$ be a sequence satisfying one of the following conditions: 
	\begin{itemize} 
		\item $(x_n)$ is an angular $(\omega, \theta)$-sequence such that for all $i \in J$,
		\[
			\liminf_{n \to \infty} \sprod{\sigma(o, x_n)}{\alpha_{i}} = +\infty, \text{or}
		\]
		\item $(x_n)$ is a $(\omega, J, c)$-core sequence.
	\end{itemize}
	Let $(u_n)$ be an auxiliary sequence.
	Let $F$ be the associated spherical facet in the building at infinity $\scrX^\infty$.
	Then the sequence $(\pi_F(u_n) : n \in \NN)$ is constant in the outer fa\c cade $\mathcal{X}(F)$. Let $x_F$ be its
	value. Then for each $y \in V_g$, we have
	\begin{equation}
		\label{eq:78}
		\lim_{n \to \infty} \sigma(o,x_n) - \sigma(y,x_n) = \sigma_{\mathcal{X}(F)} \bigl( \pi_F(o), x_F \bigr) 
		- \sigma_{\mathcal{X}(F)} \bigl( \pi_F(y), x_F \bigr) + Q_J \bigl( h(o,y;\omega) \bigr), 
	\end{equation}
	where in fact the sequence $\sigma(o,x_n) - \sigma(y,x_n)$ is eventually constant. 
\end{lemma}
\begin{proof}
	Let $(x_n')$ be any $(\omega, J, c)$-core subsequence of $(x_n)$. 
	We first note that $\sigma(o, x_n') = \sigma(o, u_n') + \sigma(u_n', x_n')$ because 
	$\Omega(o, x_n') \subseteq \Omega(o, u_n')$. We claim that the following holds true. 
	\begin{claim} 
		For any $y \in V_s$ there is an integer $n_y$ such that $\sigma(y, x_n') = \sigma(y, u_n') + \sigma(u_n', x_n')$ 
		for any $n \geqslant n_y$. 
	\end{claim}
	Since we are using full apartment systems, by \cite[Proposition 3.1.2]{Rousseau2023} there is an apartment $\scrA'$ 
	containing $y$ and a shortening of $u_n' + F_J$, hence $u_n' + F_J$ for $n \geqslant n_y$ for some $n_y$. There is $\omega'$ 
	in the boundary of $\scrA'$ belonging to the $J$-residue of $\omega$ such that the cone $[y,\omega']$ contains $u_n'$ for 
	$n \geqslant n_y$ possibly after increasing $n_y$. The intersection of the cone $[x_n',-\omega]$ with the support of $F_J$ 
	contains $u_n'$, and since $\omega$ and $\omega'$ are $J$-equivalent, $u_n'$ also belongs to the intersection of 
	$[x_n',-\omega']$ with the support of $F_J$. Therefore $[x_n',-\omega']$ contains $[u_n',-\omega']$ which contains $y$.
	This implies the claim. 

	The claim provides the equality: $\sigma(o,x_n') - \sigma(y,x_n') = \sigma(o,u_n') - \sigma(y,u_n')$ for $n \geqslant n_y$, 
	so it remains to compute the limit \eqref{eq:78} with $x_n'$ replaced by $u_n'$. As before, there is an apartment $\scrA'$ 
	containing $y$ and  $u_n'$ for $n \geqslant n_y$. Let $\psi$ be an isomorphism between this apartment and the model $\Sigma$ 
	such that $\psi([y,\omega]) = S_0$. There exists $w \in W_J$ such that $w.\psi(u_n') \in S_0$ for $n \geqslant n_y$.
	Let $\tilde{u}_n$ be the preimage by $\psi$ of $w.\psi(u_n')$. There exists $n_y' \geqslant n_y$ such that for 
	$n \geqslant n_y'$ we have $\tilde{u}_n \in [u_{n_y}',\omega]$. Then $Q_J \bigl( \sigma(u_{n_y}', u_n') \bigr) = 
	Q_J \bigl( \sigma(u_{n_y}', \tilde{u}_n) \bigr)$. Moreover, $\sigma(y, u_n')=\sigma(y, \tilde{u}_n)$. Therefore,
	\begin{align*}
		Q_J\big(\sigma(u_{n_y}', u_n') - \sigma(y, u_n')\big)
		&=
		Q_J\big(\sigma(u_{n_y}', \tilde{u}_n) - \sigma(y, \tilde{u}_n)\big) \\
		&=
		Q_J\big(h(u_{n_y}', y; \omega)\big).
	\end{align*}
	Because $u_{n_y}' \in [o, \omega]$, by the cocycle relation we obtain: 
	\begin{align*}
		Q_J\big(\sigma(o, u_n') - \sigma(y, u_n')\big)
		&=
		Q_J\big(\sigma(o, u_{n_y}')\big) + Q_J\big(\sigma(u_{n_y}', u_n') - \sigma(y, u_n')\big) \\
		&=
		Q_J\big(h(o, u_{n_y}'; \omega\big) + Q_J\big(h(u_{n_y}', y; \omega)\big) \\
		&=
		Q_J\big(h(o, y; \omega)\big).
	\end{align*}
	Finally, to complete the proof we just invoke Lemma \ref{lem:4} for the $P_J$-projections of $\sigma(o, u_n')$ and 
	$\sigma(y, u_n')$. Since the limit is independent of the subsequence $(x_n')$, the lemma follows.
\end{proof}

\subsection{Parabolic subgroups, Levi factors and  their fa\c cades}
\label{sec:14}
In this section we consider the Bruhat--Tits case. In particular, we are going to explain that each outer fa\c cade
$\mathcal{X}(F)$ is the Bruhat--Tits building associated with the semisimple quotient of the parabolic subgroup
stabilizing $F$. Picking an opposite facet $F'$ corresponds to picking a parabolic subgroup
${\rm Stab}(F')$ which is opposite ${\rm Stab}(F)$, {\it i.e.} ${\rm Stab}(F) \cap {\rm Stab}(F')$
is a Levi factor in both parabolic subgroups 

The descriptions of the Furstenberg, of the combinatorial and of the Martin compactifications at the bottom of the spectrum
make appear affine buildings at infinity. As expected, the algebraic group case, and its rich Lie-theoretic combinatorics,
is also the main source of inspiration for the constructions of the involved auxiliary affine buildings. 

Keeping the notation of the previous section, let again ${\mathcal A}({\bf S})$ be the apartment attached to the maximal
$k$-split torus ${\bf S}$. Let $F$ be a facet of the spherical building at infinity $\scrX^\infty$, which we assume to be
contained in the boundary ${\mathcal A}({\bf S})^\infty$ of ${\mathcal A}({\bf S})$. The stabilizer
${\bf P}_F(k) = {\rm Stab}_{{\bf G}(k)}(F)$ consists of the $k$-rational points of a parabolic $k$-subgroup ${\bf P}_F$
containing ${\bf S}$. The maximal $k$-split torus ${\bf S}$ provides a Levi decomposition
${\bf P}_F \simeq {\bf M}_{F,{\bf S}} \ltimes {\rm rad}_{\rm u}({\bf P}_F)$ where ${\bf M}_{F,{\bf S}}$ can be defined
algebraically as the centralizer of a suitable singular subtorus of ${\bf S}$, see \cite[Proposition 20.5]{BorelBook}, or
geometrically as the (Zariski closure of) the stabilizer in ${\bf G}(k)$ of $F$ and its opposite $-F = {\rm opp}_{\bf S}(F)$
with respect to the boundary ${\mathcal A}({\bf S})^\infty$. Note that ${\rm rad}_{\rm u}({\bf P}_F)(k)$ acts simply transitively
on the facets of $\scrX^\infty$ which are opposite $F$ or, equivalently, on Levi factors of ${\bf P}_F$. 

So far, we are dealing with the (unique) spherical building at infinity: this building comes from the combinatorics of the 
spherical Tits system constructed in Borel--Tits theory. It can be geometrically constructed as the boundary at infinity 
\cite[Section II.8]{Bridson1999} of the Gromov compactification of $\scrX$, which is the compactification obtained by the
horospherical process when using the family of usual distances to points for the Bruhat--Tits ${\rm CAT}(0)$-metric 
\cite[Section 11.2]{Abramenko2008}.

As for any compactification, thanks to the Cartan decomposition and its geometric interpretation (Section \ref{sec:4}), 
it is enough to describe the closure of a Weyl sector in order to describe the full space. In the case of the visual boundary,
the parameters of convergence are radial. The points in the boundary of the chosen Weyl sector are in bijective
correspondence with the unit vectors in the sector. 

The other family of parameters, which is useful to describe the three compactifications mentioned at the beginning of this
section, consists of the distances to the walls bounding the chosen Weyl sector. The choice of a spherical facet at infinity
$F$ as before can be seen as a way to select a subfamily of distances: this is explained in Section \ref{sec:3.3} when
introducing the fa\c cade corresponding to $F$ and the inner fa\c cade corresponding to $F$ and $-F$ as before. We want to give
an algebraic group interpretation of these objects. 

More precisely, let us go back to the Levi decomposition
${\bf P}_F \simeq {\bf M}_{F,{\bf S}} \ltimes {\rm rad}_{\rm u}({\bf P}_F)$ associated with the inclusion
$F \subset {\mathcal A}({\bf S})^\infty$. The group ${\bf M}_{F,{\bf S}}$ is a reductive $k$-group, therefore as such it admits
a Bruhat--Tits building, which we denote by $\scrX( {\bf M}_{F,{\bf S}},k)$. The connected center
${\bf Z}_{F,{\bf S}}$ of ${\bf M}_{F,{\bf S}}$ is a torus, the derived subgroup
${\bf G}_{F,{\bf S}} = [{\bf M}_{F,{\bf S}},{\bf M}_{F,{\bf S}}]$ is a semisimple $k$-group and the multiplication map
${\bf Z}_{F,{\bf S}} \times {\bf G}_{F,{\bf S}} \to {\bf M}_{F,{\bf S}}$ is an isogeny: the geometric counterpart of the latter
fact is that the building $\scrX( {\bf M}_{F,{\bf S}},k)$ admits a direct factor isometric to the Euclidean space
$X_*({\bf Z}_{F,{\bf S}}) \otimes_{\mathbb Z} \mathbb R$. In this case, the inner fa\c cade $\mathcal{X}(F,-F)$ associated with
the inclusion $-F \cup F \subset {\mathcal A}({\bf S})^\infty$ in Section \ref{sec:3.3} is nothing else than a natural
${\bf M}_{F,{\bf S}}(k)$-equivariant copy of (the geometric realization of) $\scrX( {\bf M}_{F,{\bf S}},k)$ inside
${\mathcal X}$; in the abstract group-theoretic (and more general) context of valuations of root group data, it had already been
constructed in \cite[Section 7.6]{BruhatTits1972}. As a set, the inner fa\c cade $\mathcal{X}(F,-F)$ is the union of
the apartments $g.{\mathcal A}({\bf S})$ when $g$ runs over ${\bf G}_{F,{\bf S}}(k)$, or equivalently over
${\bf M}_{F,{\bf S}}(k)$ since the elements of the singular torus ${\bf Z}_{F,{\bf S}}(k)$ act trivially on the Bruhat--Tits 
building of ${\bf G}_{F,{\bf S}}$. In fact, in the apartment ${\mathcal A}({\bf S})$ the elements of ${\bf Z}_{F,{\bf S}}(k)$
act as translations along directions which are parallel to the vector subspace $V_F$ of $V$ given by $F$: in particular, they
preserve the distances to any wall in the apartment ${\mathcal A}({\bf S})$ whose direction contains $V_F$, moreover unbounded
sequences of such semisimple matrices can be used to push to infinity a given vertex in ${\mathcal A}({\bf S})$. 

This is a suitable place to mention a group-theoretic interpretation of distances to walls. Let $x$ be a special vertex in
${\mathcal A}({\bf S})$ and let $\omega$ be a chamber at infinity lying in ${\mathcal A}({\bf S})^\infty$ and belonging to
the residue of $F$; the chamber $\omega$ defines a basis $( a_i : i \in I_0)$ of the root system of ${\bf G}$. The Weyl 
cone $[x,\omega]$ is simplicial and there is a bijective correspondence between the simple roots $( a_i : i \in I_0)$ and
the sector panels (codimension $1$ faces) of $[x,\omega]$: the affine subspace spanned by a sector panel is directed by
the kernel of a simple root and each simple root appears in this way. If we denote by $J$ the type of $F$, the subroot system
$\Phi_J$ with a basis $(a_i : i \in J)$ corresponds to the directions of the walls containing $V_F$. Concretely, the sector
$[x,\omega]$ is bounded by two kinds of sector panels: those whose direction is the kernel of a simple root in $\Phi_J$ and
the remaining ones. By construction each element in ${\bf S}(k)$ stabilizes ${\mathcal A}({\bf S})$ and acts as a translation
Section \ref{s:B1}. Among those translations, the semisimple matrices in ${\bf Z}_{F,{\bf S}}(k)$ act with a direction
parallel to the sector face $F$. More precisely, let $a_i$ be a simple root and let $\Pi_i$ be the corresponding sector panel. 
Recall from Section \ref{sec:4} that we have a decreasing filtration of ${\bf U}_{a_i}(k)$ by a countable family of
compact open subgroups indexed by affine linear forms of given vectorial part $a_i$. Then for a special vertex
$y \in [x,\omega]$, the (non-negative) difference between the affine root whose zero-set contains $\Pi_i$ (hence $x$) and
the affine root whose zero-set contains $y$ is a discrete version of the Bruhat--Tits distance for special vertices in
$[x,\omega]$. Recalling that for any $n \in {\bf N}(k)$ we have $n X_\alpha n^{-1} = X_{\alpha \circ \xi(n)}$, we see that
the elements $s \in {\bf Z}_{F,{\bf S}}(k)$ act as translations along directions which are parallel to the vector subspace
$V_F$, since any such $s$ centralizes each root group ${\bf U}_{a_j}(k)$ with $j \in J$. Figure \ref{fig:8} illustrates
the dynamics of semi-simple matrices along a sector face.

\begin{figure}[ht!]
   \includegraphics[width=20em]{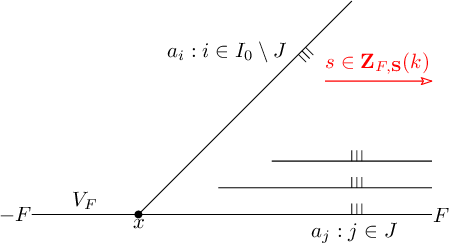}
   \caption{Dynamics of semisimple matrices along a sector face.}
   \label{fig:8}
\end{figure}

Going back to fa\c cades, we consider now the affine building ${\mathcal X}(F)$ associated with the facet $F$ only. 
Strictly speaking, this (essential) building is not contained in $\mathcal X$ but it is a stratum of the boundary of any of
the compactifications mentioned above. In the natural ${\bf G}(k)$-action on these compactifications, the subgroup
${\bf P}_F(k)$ stabilizes the stratum ${\mathcal X}(F)$, which can thus be identified with the Bruhat--Tits building of
the semisimple quotient of ${\bf P}_F$. This is made precise in the theorem below, which also summarizes many available
compactifications and contains the first half of Theorem \ref{th - alg gr} stated in Introduction.

\begin{theorem}
	\label{thm:BerkoMartin}
	Let ${\bf G}$ be a connected semisimple linear algebraic group defined over a non-Archimedean local field $k$ and let
	$\mathcal{X}({\bf G},k)$ be its Bruhat--Tits building. 
	\begin{enumerate}[label=(\roman*), ref=\roman*]
		\item \label{en:6:1}
		The maximal Berkovich compactification $\overline{\mathcal{X}}({\bf G},k)$ is ${\bf G}(k)$-equivariantly homeomorphic 
		to the wonderful compactification, the maximal Satake, as well as the polyhedral compactification of 
		$\mathcal{X}({\bf G},k)$. The closure of the set of vertices in $\overline{\mathcal{X}}({\bf G},k)$ is
		${\bf G}(k)$-equivariantly homeomorphic to the group-theoretic compactification of $\mathcal{X}({\bf G},k)$.
		\item \label{en:6:2}
		For any proper parabolic $k$-subgroup ${\bf P}$, there exists a natural, injective, continuous map
		\[
			\mathcal{X}({\bf P}/{\rm rad}({\bf P}),k) \to \overline{\mathcal{X}}({\bf G},k)
		\]
		whose image lies in the boundary. These maps altogether provide the following stratification
		\[
			\overline{\mathcal{X}}({\bf G},k) 
			= \bigsqcup_{\textrm{${\bf P}$ parabolic $k$-subgroup}} \mathcal{X}({\bf P}/{\rm rad}({\bf P}),k)
		\]
		where the union is indexed by the collection of all parabolic $k$-subgroups in ${\bf G}$.
		\item \label{en:6:3}
		Any two points $x, y$ in $\overline{\mathcal{X}}({\bf G},k)$ lie in a common compactified apartment 
		$\overline{{\mathcal A}}({\bf S})$ and we have
		\[
			{\bf G}(k) = {\rm Stab}_{{\bf G}(k)}(x) \, {\bf N}(k) \, {\rm Stab}_{{\bf G}(k)}(y)
		\]
		where ${\bf N}$ is the normalizer of the maximal split torus ${\bf S}$ defining the apartment ${\mathcal A}({\bf S})$.
	\end{enumerate}
\end{theorem}
\begin{proof}
	Berkovich compactifications of Bruhat--Tits buildings were defined in \cite{Berkovich} in the split case, and in 
	\cite{RTW10} in full generality. The idea is to embed $\mathcal{X}({\bf G},k)$ into the analytic space ${\bf G}^{\rm an}$
	attached to ${\bf G}$ and then into the (compact) analytic space $({\bf G}/{\bf P})^{\rm an}$ attached to a (projective) 
	flag variety of ${\bf G}$; when the parabolic $k$-subgroup ${\bf P}$ is minimal, the closure of the image of the composed
	map is maximal among Berkovich compactifications. The wonderful compactification is obtained by replacing a flag variety 
	${\bf G}/{\bf P}$ by the de Concini--Procesi wonderful compactification $\overline{\bf G}$ and then by using the resulting
	equivariant embedding from $\mathcal{X}({\bf G},k)$ to $\overline{\bf G}^{\rm an}$. It was defined in \cite{RTW17} in the 
	split case, and in \cite{Chanfi} in full generality. The polyhedral compactification in the Bruhat--Tits case was first 
	constructed in \cite{Landvogt}; it coincides with the compactification we presented in 
	Section \ref{ss - polyhedral compactification} \cite[\S 3.4.5.1]{Rousseau2023}. The Satake compactifications were defined 
	in \cite{Werner07} and then revisited in the context of non-Archimedean analytic geometry in \cite{RTW12}. 
	The identifications between Berkovich and Satake compactifications are given by \cite[Theorem 2.1]{RTW12}, while the 
	identification between the wonderful (resp. polyhedral) and the maximal Berkovich compactification is 
	\cite[Corollary 15]{Chanfi} (resp. is in \cite{Werner07} or \cite[Proposition 5.4]{RTW12}). Therefore the description of 
	the closure of vertices (in any of the previous compactifications) by means of the group-theoretic compactification is 
	the content of \cite[Theorem 20]{gure}. This provides \eqref{en:6:1} and once this is obtained, \eqref{en:6:2} and 
	\eqref{en:6:3} follow from  \cite[Theorem 2 of Introduction]{RTW10}. 
\end{proof}

\section{Gromov compactification}
\label{sec:5}
In this section we describe the Gromov compactification that can be constructed for any $\cat$ space $X$. For this purpose
we embed $X$ into the space of continuous functions on $X$. The Gromov boundary corresponds to the Busemann functions. 
The method was originally introduced in \cite[\S3]{Ballman1985} for complete Riemannian manifolds of non-positive
curvature, and it has been extended to $\cat$ spaces in \cite[Chapter II.8]{Bridson1999}. It is often
called the \emph{horofunction procedure}. In particular, it depends on the choice of the metric. In our construction we use 
an intrinsic and natural metric which is well-defined for special vertices and which avoids the use of any geometric realization 
of the buildings.

First, let us observe that 
\[
	d(x, y) = \norm{\sigma(x, y)}, \qquad \text{for } x, y \in V_s, 
\]
defines a metric on special vertices of $\scrX$. Since $\sigma(x, y) = -w_0.\sigma(y, x)$, it is enough to justify the triangle
inequality. Given $x, y, z \in V_s$, let $\omega \in \Omega(x, y)$. Then by \eqref{eq:35} and \eqref{eq:29}, we get
\begin{align}
	\nonumber
	\norm{\sigma(x, y)} 
	&=
	\norm{h(x, y; \omega)} \\
	\nonumber
	&\leqslant
	\norm{h(x, z; \omega)} + \norm{h(z, y; \omega)} \\
	&\leqslant
	\label{eq:61}
	\norm{\sigma(x, y)} + \norm{\sigma(z, y)}.
\end{align}
Analogously, one can show that
\begin{equation}
	\label{eq:69}
	\sprod{\sigma(x, y)}{\tilde{\rho}}
	\leqslant
	\sprod{\sigma(x, y)}{\tilde{\rho}} + \sprod{\sigma(z, y)}{\tilde{\rho}}.
\end{equation}

\begin{lemma}
	\label{lem:8}
	For all $x, y, z \in V_s$,
	\[
		|\sigma(x, y) - \sigma(x, z)| \leqslant |\sigma(y, z)|.
	\]
\end{lemma}
\begin{proof}
	Let $\omega \in \Omega(x, y)$, and let $\scrA$ be any apartment containing $[x, \omega)$. 
	Let $\psi: \scrA \rightarrow \Sigma$ be the isomorphism such that $\psi([x, \omega)) = S_0$. Suppose that $z \in V_s(\scrA)$.
	Let $w \in W$ be such that $z_0 = w.\psi(z) \in S_0$. Then
	\[
		\sigma(x, y) - \sigma(x, z) = \sigma(x, y) - \sigma(x, z_0) = \psi(y) - \psi(z_0).
	\]
	Let $w = r_{i_1} r_{i_2} \dots r_{i_k}$ be a minimal representation of $w$. For $j = 1, \ldots, k$, we set
	\[
		z_j = \psi^{-1}\big(r_{i_j} \psi(z_{j-1})\big).
	\]
	In particular, $\psi(y)$ and  $\psi(z_{j-1})$ are located on the same side of the wall $H_{\alpha_{i_j}; 0}$, which does not
	contain $\psi(z_j)$. Let $v_j$ be the unique point on the line segment between $\psi(y)$ and $\psi(z_j)$ which belongs to 
	$H_{\alpha_{i_j}; 0}$. Since
	\[
		|v_j - \psi(z_{j-1})| = |v_j - \psi(z_j)|
	\]
	we obtain
	\begin{align*}
		|\psi(y) - \psi(z_{j-1})| 
		&\leqslant |\psi(y) - v_j | + |v_j - \psi(z_{j-1})| \\
		&\leqslant |\psi(y) - v_j | + |v_j - \psi(z_j)| \\
		&=|\psi(y) - \psi(z_j)|.
	\end{align*}
	Consequently,
	\[
		|\psi(y) - \psi(z_0)| \leqslant |\psi(y) - \psi(z)|,
	\]
	which completes the proof in the case when $x, y, z \in V_s(\scrA)$.

	Next, assume that $z \notin V_s(\scrA)$. Let $c \in \St(x)$ be a chamber in $\scrA$ having all vertices in 
	$V \cap [x, \omega)$. Let $\rho_{c, \scrA}$ be the retraction of $\scrX$ onto $\scrA$ with center $c$, see 
	\cite[\S2.1.7]{Rousseau2023} for the definition. We set $z_0 = \rho_{c, \scrA}(z)$. Since
	\[
		\sigma(x, z) = \sigma(x, z_0),
	\]
	by the first part of the proof,
	\[
		|\sigma(x, y) - \sigma(x, z)| \leqslant |\sigma(y, z_0)|.
	\]
	Now, we invoke \cite[Theorem 2.1.8]{Rousseau2023} to get $|\sigma(y, z_0)| \leqslant |\sigma(y, z)|$,
	and the lemma follows.
\end{proof}
Now, let us denote by $\calC_*(V_s)$ the quotient space of real-valued $1$-Lipschitz functions on $V_s$ equipped with the 
topology of pointwise convergence by the $1$-dimensional subspace of constant functions. We introduce the action of the 
automorphisms group $\Aut(\scrX)$ on $\calC^*(V_s)$ by setting for $g \in \Aut(\scrX)$,
\[
	g . [f] = [g . f]
\]
where $[f]$ is the equivalence class represented by $f$, and
\[
	(g . f)(y) = f(g^{-1} . y), \qquad y \in V_s.
\]
Now, we define the embedding
\begin{alignat*}{1}
	\iota: V_s &\longrightarrow \calC_*(V_s) \\
	x  &\longmapsto [f_x]
\end{alignat*}
where 
\[
	f_x(y) = - d(y, x), \qquad y \in V_s.
\]
It is easy to check that the map $\iota$ is equivariant and injective, and $\iota(V_s)$ is discrete. Let us denote by
$\clr{\scrX}_V$ the closure of $\iota(V_s)$ in $\calC_*(V_s)$. The space $\calC_*(V_s)$ is metrizable, thus by
Lemma \ref{lem:3} while studying the Gromov boundary we can restrict attention to angular sequences.

The following theorem provides a connection between angular sequences and spherical buildings at infinity, as introduced and 
described in Section \ref{sec:12} (see also \cite[Section 3.2]{Rousseau2023}). To put the result below in perspective recall 
that in suitable (metric) cases, applying the horofunction procedure and taking asymptotic classes of geodesic rays leads to 
the same compactification \cite[Chapter II.8, Theorem 8.13 and Corollary 8.20]{Bridson1999}. In the specific case of affine 
buildings, we see that the convergence parameters for unbounded sequences are angular and compatible with the spherical 
building structure, formulated in terms of residues. 

\begin{theorem}
	\label{thm:11}
	Let $(x_n)$ be an angular $(\omega, \theta)$-sequence. Then
	\[
		\lim_{n \to \infty} [f_{x_n}(y)] = [\sprod{\theta}{h(o, y; \omega)}] \quad \textrm{for any}~y \in V_s.
	\]
	If $(x_n')$ is $(\omega', \theta')$-sequence such that $(f_{x_n'})$ converges to the same limit then
	$\theta = \theta'$ and $\omega' \sim_J \omega$ where $J = \{i \in I_0 : \sprod{\theta}{\alpha_i} = 0\}$.
\end{theorem}
\begin{proof}
	We write
	\begin{align}
		\nonumber
		f_{x_n}(y) - f_{x_n}(o)
		&=
		\frac{\norm{\sigma(o, x_n)}^2 - \norm{\sigma(y, x_n)}^2}{\norm{\sigma(y, x_n)} + \norm{\sigma(o, x_n)}} \\
		\label{eq:51}
		&=
		\frac{2\sprod{\sigma(o, x_n)}{\sigma(o, x_n) - \sigma(y, x_n)} - \norm{\sigma(o, x_n) - \sigma(y, x_n)}^2}
		{\norm{\sigma(y, x_n)} + \norm{\sigma(o, x_n)}}.
	\end{align}
	Since $\norm{\sigma(o, y)} = \norm{\sigma(y, o)}$, by Lemma \ref{lem:8} we have
	\begin{equation}
		\label{eq:42}
		\norm{\sigma(y, x_n) - \sigma(o, x_n)} \leqslant \norm{\sigma(o, y)}.
	\end{equation}
	Therefore, by \eqref{eq:51},
	\[
		\lim_{n \to \infty} f_{x_n}(y) - f_{x_n}(o) = \lim_{n \to \infty} 
		\frac{\sprod{\sigma(o, x_n)}{\sigma(o, x_n) - \sigma(y, x_n)}}{\norm{\sigma(o, x_n)}}.
	\]
	In view of \eqref{eq:42},
	\[
		\lim_{n \to \infty}
		\frac{\sprod{\sigma(o, x_n)}{\sigma(o, x_n) - \sigma(y, x_n)}}{\norm{\sigma(o, x_n)}}
		=
		\lim_{n \to \infty}
		\sprod{\theta}{\sigma(o, x_n) - \sigma(y, x_n)}.
	\]
	Since $(x_n)$ is an angular $(\omega, \theta)$-sequence, the inequality $\sprod{\theta}{\lambda_i} > 0$ implies that 
	\[
		\lim_{n \to \infty}{\sprod{\sigma(o, x_n)}{\alpha_i}} = +\infty,
	\]
	thus by Lemma \ref{lem:9},
	\[
		\lim_{n \to \infty} \sprod{\theta}{\sigma(o, x_n) - \sigma(y, x_n)}
		=
		\sprod{\theta}{h(o, y; \omega)}
	\]
	as claimed.

	Let us now turn to the proof of the second part of the theorem. Suppose that 
	\begin{equation}
		\label{eq:52}
		\sprod{\theta}{h(o, y; \omega)} = \sprod{\theta'}{h(o, y; \omega')},
		\qquad \text{for all } y \in V_s.
	\end{equation}
	Let $\scrA$ be an apartment containing $\omega$ and $\omega'$, and let $o'$ be any good vertex in $\scrA$. By the 
	cocycle relation and \eqref{eq:52}, we get
	\[
		\sprod{\theta}{h(o', y; \omega)} = \sprod{\theta'}{h(o', y; \omega')},
		\qquad\text{for all } y \in V_s.
	\]
	Hence, there is $w \in W$ such that 
	\begin{equation}
		\label{eq:53}
		\sprod{\theta}{\lambda} = \sprod{\theta'}{w^{-1} . \lambda}
		\qquad\text{for all } \lambda \in P.
	\end{equation}
	Let $k = \ell(w)$. We write $w = w_k = w_{k-1} r_{\beta_k}$ with $\ell(w_k) > \ell(w_{k-1})$ and $\beta_k \in \Phi^+$.
	Then
	\begin{align*}
		\sprod{\theta'}{w_k^{-1} . \lambda} 
		&= \sprod{r_{\beta_k} . \theta'}{w_{k-1}^{-1} . \lambda} \\
		&= \sprod{\theta'}{w_{k-1}^{-1} . \lambda} 
		- \sprod{\theta'}{\beta_k\spcheck} \sprod{\beta_k}{w_{k-1}^{-1} . \lambda}.
	\end{align*}
	Hence, arguing by induction we arrive at
	\begin{equation}
		\label{eq:63}
		\sprod{\theta'}{w^{-1} . \lambda} = \sprod{\theta'}{\lambda} - \sum_{j = 1}^k \sprod{\theta'}{\beta_j\spcheck}
		\sprod{\lambda}{w_{j-1} . \beta_j}
	\end{equation}
	where for each $j \in \{1, 2, \ldots, k\}$, we have $w_j = w_{j-1} r_{\beta_j}$ with $\ell(w_j) > \ell(w_{j-1})$ and
	$\beta_j \in \Phi^+$. By \cite[Proposition in Section 5.7]{Humphreys1990}, we have
	\begin{equation}
		\label{eq:64}
		w_{j-1} . \beta_j \in \Phi^+.
	\end{equation}
	Hence, by \eqref{eq:53} and \eqref{eq:63} we get
	\[
		\sprod{\theta}{\lambda} \leqslant \sprod{\theta'}{\lambda}.
	\]
	Since we can swap $\omega$ and $\theta$ with $\omega'$ and $\theta'$, respectively, we conclude that $\theta = \theta'$. Thus,
	by \eqref{eq:53} and \eqref{eq:63},
	\[
		\sum_{j = 1}^k \sprod{\theta}{\beta_j\spcheck} \sprod{\lambda}{w_{j-1} . \beta_j} = 0.
	\]
	Because $\theta \in \Ss^{r-1}_+$, by \eqref{eq:64}, $\sprod{\theta}{\beta_j\spcheck} = 0$ for all $j \in \{1, 2, \ldots, k\}$,
	and the theorem follows.
\end{proof}

Since both $\Ss^{r-1}$ and $\Omega$ are compact, in view of Theorem \ref{thm:11} the set $\clr{\scrX}_V$ is a compact
subset of $\calC_*(V_s)$. It is called the \emph{Gromov compactification} of $\scrX$. The action of the automorphism group 
$\Aut(\scrX)$ is continuous on $\clr{\scrX}_V$. 
At this stage, the maximal boundary $\Omega$, so far a mere compact topological space, can also be seen as the set of chambers of the spherical building at infinity of $\scrX$, which is also the boundary of the Gromov compactification of it. 

\section{Combinatorial compactifications}
\label{s - comb}
We use yet another construction described in \cite{Caprace2011} for a wide class of buildings not necessarily of affine type.
Since we are interested in compactifying special vertices only, we can make the approach more explicit. In particular, we show 
convergence of core sequences and identify when the limits are the same. Thanks to this we immediately conclude in Section 
\ref{s - Furst comp} that the combinatorial and Furstenberg compactifications are $\Aut(\scrX)$-equivariantly isomorphic.

For this purpose, let
\begin{equation}
	\label{eq:74}
	\Gamma(\scrX) 
	=\left\{
	\gamma: V(\scrX) \rightarrow \bigsqcup_{x \in V(\scrX)} \St(x) : \gamma(x) \in \St(x) \text{ for all }
	x \in V(\scrX)
	\right\}.
\end{equation}
Since $V(\scrX)$ is countable and $\St(x)$ is finite for each $x \in V(\scrX)$, by Tychonoff's theorem $\Gamma(\scrX)$
is a compact metrizable space. For an automorphism $g \in \Aut(\scrX)$ and $\gamma \in \Gamma(\scrX)$, we set
\[
	(g . \gamma)(x) = g . \gamma(g^{-1} . x), \qquad\text{for } x \in V(\scrX).
\]
Let us consider the map
\begin{alignat*}{1}
	\iota : V_s & \longrightarrow \Gamma(\scrX) \\
	x & \longmapsto \gamma_x
\end{alignat*}
where
\begin{equation}
	\label{eq:73}
	\gamma_x(y) = \bigcap_{c \in \calC(x)} \proj_{y}(c), \qquad y \in V(\scrX),
\end{equation}
whereas $\calC(x)$ is the set of chambers in $\St(x)$ and $\proj_y(c)$ is the unique chamber in $\St(y)$ that is
the closest to $c$. The map $\iota$ is equivariant, injective and has discrete image. The latter two properties are 
consequences of 
\begin{equation}
	\label{eq:72}
	\gamma_x(y) = \text{maximal simplex in $\St(y)$ with vertices in $[x, y]$}
\end{equation}
which leads to $\gamma_x(y) = x$ if and only $y = x$. In particular, the codimension of $\gamma_x(y)$ is the number of 
independent walls containing $x$ and $y$. For the proof of \eqref{eq:72} see \cite[Lemma 1.1]{Caprace2011}.
The closure of $\iota(V_s)$ in $\Gamma(\scrX)$ is called combinatorial compactification and it is denoted by
$\clr{\scrX}_C$. The map $\gamma_x$ can be interpreted as a discrete vector field. 
This analogy was already mentioned in \cite[Introduction]{Caprace2011} and we make it more concrete below. 
In the following theorem we study the corresponding boundary in terms of core sequences.
\begin{theorem}
	\label{thm:12}
	Suppose that $(x_n)$ is an $(\omega, J, c)$-core sequence. Then for every $y \in V(\scrX)$, the sequence
	$(\gamma_{x_n}(y) : n \in \NN_0)$ is eventually constant, and thus
	\[
		\gamma(y) = 
		\lim_{n \to \infty} \gamma_{x_n}(y) = \bigcup_{m = 1}^\infty \bigcap_{n = m}^\infty \gamma_{x_n}(y).
	\]
	If $(x_n')$ is an $(\omega', J', c')$-core sequence such that $(\gamma_{x_n'})$ converges to the same limit
	then $J' = J$, $\omega' \sim_J \omega$, and $c' = c$.
\end{theorem}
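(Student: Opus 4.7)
The plan is to split the theorem into two claims: convergence of $(\gamma_{x_n}(y))_n$ for each fixed $y$, and unique recovery of $(J,c,[\omega]_J)$ from the limit $\gamma$.

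For the convergence, I fix $y \in V(\scrX)$ and reduce to an apartment calculation. A standard consequence of the building axioms is that some apartment $\scrA'$ contains $y$ together with a subsector $\scrS$ of $[o,\omega]$; crucially $\scrA'$ depends on $y$ but not on $n$. For $n$ large enough, $x_n \in \scrS \subset \scrA'$, whence $[x_n,y] \subset \scrA'$ and in particular $\gamma_{x_n}(y) \subset \scrA'$. I then pick a type-rotating isomorphism $\psi \colon \scrA' \to \Sigma_\Phi$ with $\psi(o)=0$ and $\psi(\scrS) \subset S_0$, so that $\psi(x_n) = \sigma(o,x_n)$. By \eqref{eq:72}, a vertex $z \in \St(y)$ lies in $\gamma_{x_n}(y)$ precisely when $z \in [x_n,y]$, i.e.\ when for every wall $H_{\alpha;k}$ of $\Sigma_\Phi$ passing through $\psi(y)$ with $\psi(z)$ strictly on one side, $\psi(x_n)$ lies on the same closed side. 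Writing $\alpha \in \Phi^+$ as $\sum_{i \in I_0} a_i \alpha_i$ with $a_i \in \NN_0$,
\[
	\sprod{\sigma(o,x_n)}{\alpha}
	= \sum_{j \in J} a_j c_j + \sum_{i \in I_0 \setminus J} a_i \sprod{\sigma(o,x_n)}{\alpha_i}.
\]
If some $a_i>0$ with $i \notin J$, the second sum tends to $+\infty$ and $\psi(x_n)$ eventually sits on the positive side of $H_{\alpha;k}$; otherwise the pairing equals the fixed constant $\sum_{j \in J} a_j c_j$, determining the side once and for all. Since only finitely many walls of $\Sigma_\Phi$ through $\psi(y)$ bound the star of $\psi(y)$, the membership criterion for $z$ stabilizes in $n$, so $\gamma_{x_n}(y)$ is eventually constant and, taking the union over the stabilization times, equals $\bigcup_m \bigcap_{n \geqslant m} \gamma_{x_n}(y)$.

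For the uniqueness part, the strategy is to read the triple $(J,c,[\omega]_J)$ off the family $\{\gamma(y): y \in V(\scrX)\}$. The convergence analysis identifies $\gamma(y)$ with the list of \emph{eventually stable sides} of the walls of $\Sigma_\Phi$ through $\psi(y)$, which split into those whose defining root is supported on $\{\alpha_j : j \in J\}$ (frozen at the value $\sum_j a_j c_j$) and the rest (eventually pushed to $+\infty$). To reconstruct $J$, I would probe $\gamma$ along a ray $y_k = o + k \lambda$ with $\lambda \in P^+$ strongly dominant: for $k$ large, the walls through $y_k$ whose defining root is not supported on $\{\alpha_j:j \in J\}$ are unambiguously on the positive side of $x_n$, so the residual combinatorial structure of $\gamma(y_k)$ depends only on the indices in $J$ and pinpoints the subset $J \subset I_0$. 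Once $J$ is known, the thresholds $c_j$ are recovered by sliding $y$ across walls of the form $H_{\alpha_j;k}$, $j \in J$: the flip in the stable side of $x_n$ occurs exactly at $k = c_j$. Finally, $[\omega]_J$ is extracted as the asymptotic direction of $\gamma(y_k)$ for $y_k$ far in the sector, which determines the sector germ at infinity modulo reflections in the $J$-walls, precisely the equivalence $\sim_J$ on $\Omega$.

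The main obstacle is the bookkeeping in the uniqueness step: the qualitative picture of ``eventually stable sides'' is clean, but rigorously turning it into a recipe that outputs $(J,c,[\omega]_J)$ from $\gamma$ requires careful use of the root system combinatorics, in particular for non-reduced root systems where extra wall families arising from half-roots and double-roots must be handled. The convergence part, by contrast, is essentially automatic once the apartment $\scrA'$ is in place; the only non-trivial ingredient is the existence of $\scrA'$, which is a standard consequence of the sector lemma in building theory.
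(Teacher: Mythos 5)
Your convergence argument has a genuine gap, and it occurs exactly in the interesting case $J \neq \varnothing$. The sector lemma does give an apartment $\scrA'$ containing $y$ and a subsector $\scrS = [z,\omega]$ of $[o,\omega]$, but your next assertion, that $x_n \in \scrS$ for $n$ large, is false in general: a core sequence keeps $\sprod{\sigma(o,x_n)}{\alpha_j} = c_j$ bounded for $j \in J$, while every point of $[z,\omega]$ satisfies $\sprod{\cdot}{\alpha_j} \geqslant \sprod{\psi(z)}{\alpha_j}$, and the base point $z$ produced by the lemma is not under your control; if $\sprod{\psi(z)}{\alpha_j} > c_j$ for some $j \in J$, then \emph{no} $x_n$ lies in $\scrS$. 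Nor can you conclude that the tail of $(x_n)$ lies in $\scrA'$: outside the subsector, $\scrA'$ may fold away from the apartment containing $[o,\omega]$ and miss the strip in which the $x_n$ live, so $[x_n,y] \subset \scrA'$ is unjustified and the wall-by-wall computation that follows does not apply to such $y$. This is precisely the case the paper treats separately: it first handles $y$ lying in an apartment containing $Q = \bigcup_n [o,x_n]$, and for the remaining $y$ it transfers the stabilization from $\rho^{o,\omega}_{\scrA}(y) \in \scrA$ back to $y$ using that the retraction is an isomorphism on $\conv\{L,y\}$ for a suitable half-apartment $L$; some substitute for this folding/retraction step (or for the chimney-germ machinery of Section \ref{sec:3.2}, compare Lemma \ref{lem:4}, which even there only places a vertex with the same projection as $y$, not $y$ itself, in a common apartment with the tail) is indispensable.

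The uniqueness step is also not a proof: the recipe of probing $\gamma$ along rays $o + k\lambda$ is only a sketch (as you acknowledge), it presupposes the description of $\gamma(y)$ as ``eventually stable sides of walls through $\psi(y)$'', which is only available for $y$ in the chosen apartment and hence inherits the gap above, and it never verifies that the extracted data are independent of the auxiliary choices ($\scrA'$, $\psi$), which is the whole content of the statement when comparing two different triples $(\omega,J,c)$ and $(\omega',J',c')$. The paper's route avoids this bookkeeping entirely: it characterizes $Q_o(\gamma) = \bigcup_{m}\bigcap_{n \geqslant m}[o,x_n]$ intrinsically in terms of $\gamma$ alone, as the minimal set containing $o$ and stable under $y \mapsto \gamma(y)$ (equivalently, the set of vertices reachable from $o$ by chains $x_{j+1} \in \gamma(x_j)$, Claim \ref{clm:6}); two core sequences with the same limit therefore have the same set $\bigcup_n[o,x_n]$, from which $J$, $c$ and the class of $\omega$ modulo $\sim_J$ are read off. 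Supplying either this claim or a rigorous version of your reconstruction is needed to close the argument.
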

\begin{proof}
	Fix a core sequence $(x_n)$ with auxiliary sequence $(u_n)$. Let
	\[
		Q = \bigcup_{n = 1}^\infty [o, u_n].
	\]
	The set $Q$ is convex. Let $\scrA$ be any apartment containing $Q$. Let $\omega_1 \in \Omega$ be such that
	$Q \subset [o, \omega_1] \subset \scrA$. First, we show that for each $y \in V(\scrA)$, the sequence
	$(\gamma_{u_n}(y) : n \in \NN_0)$ is eventually constant. Indeed, there is $n_0$ such that every half-apartment
	in $\scrA$ containing $y$ with boundary parallel to $\alpha$-wall for $\alpha \in \Phi^+ \setminus \Phi_J$, contains $u_n$
	for all $n \geqslant n_0$. On the other hand if $\alpha \in \Phi_J$, then the half-apartment containing $y$ with the
	boundary parallel to $\alpha$-wall which passes through $u_n$, is independent of $n$. Hence, by \eqref{eq:72}, we easily
	conclude that $(\gamma_{u_n}(y))$ is eventually constant. In fact, if $x \in V(\scrA)$ is such that
	\begin{align*}
		\sprod{h(o, x; \omega_1)}{\alpha_j} &= \sprod{h(o, u_{n_0}; \omega_1)}{\alpha_j}, \qquad \text{for all } j \in J,
	\intertext{and}
		\sprod{h(o, x; \omega_1)}{\alpha_i} &\geqslant \sprod{h(o, u_{n_0};\omega_1 )}{\alpha_i}, 
		\qquad \text{for all } i \in I_0 \setminus J,
	\end{align*}
	then $\gamma_{u_n}(y) = \gamma_x(y)$ for all $n \geqslant n_0$.

	If $\omega_1'$ is the opposite to $\omega_1$ in $\scrA$, $\rho^{o, \omega_1'}_{\scrA}$ restricted to the set 
	\[
		\{x_n\} \cup \bigcap_{\alpha \in \Phi^-\setminus \Phi_J} L^\alpha_{u_n}
	\]
	where for $\alpha \in \Phi^-$ and a good vertex $x \in V(\scrA)$ by $L^\alpha_x$ we denote the half-apartment in $\scrA$
	with the boundary parallel to $\alpha$-wall passing through $x$, is an isomorphism onto its image. Thus the sequence 
	$(\gamma_{x_n}(y))$ is eventually constant.

	Next, let us consider $y \in V(\scrX)$ such that there is no apartment containing $y$ and $Q$. Let $(c_1, \ldots, c_k)$
	be a shortest gallery between $Q$ and $y$. Let $c_j$ be the last chamber in this gallery that can be
	put in one apartment with $Q$. Take $\scrA$ to be an apartment containing both $c_j$ and $Q$, and let $L^\alpha$ be
	the half-apartment in $\scrA$ containing $c_j$ whose boundary contains the panel shared between $c_j$ and $c_{j+1}$.
	Since $c_{j+1}$ cannot be put into the common apartment with $Q$, the boundary $\partial L^\alpha$ intersects $Q$.
	Let $L$ be the half-apartment in $\scrA$ with the boundary $\partial L^\alpha$ which contains $\omega_1$.
	Since $\rho^{o, \omega_1}_{\scrA}(y) \in V(\scrA)$, the sequence
	\[
		\Big(\gamma_{x_n}\big(\rho^{o, \omega_1}_{\scrA}(y)\big) : n \in \NN_0 \Big)
	\]
	is eventually constant. Moreover, $\rho^{o, \omega_1}_{\scrA}$ restricted to $\conv\{L, y\}$ is an isomorphism onto
	its image, thus the sequence $(\gamma_{x_n}(y))$ is eventually constant too. This completes the proof of the 
	first part of the theorem.

	Next, let us define $Q_x$ for $x \in V(\scrX)$ by the formula
	\begin{equation}
		Q_x = \bigcup_{m = 1}^\infty \bigcap_{n = m}^\infty [x, u_n].
		\label{Qdef}
	\end{equation}
	We have the following fact, which can be understood as the construction of a discrete geodesic flow. 
	\begin{claim}
		\label{clm:6}
		For each $x \in V(\scrX)$, $Q_x$ is the minimal set $Q' \subset V(\scrX)$ with the properties:
		\begin{enumerate}[label=(\roman*), ref=\roman*]
			\item
			\label{en:1}
			$x \in Q'$,
			\item
			\label{en:2}
			if a vertex $y \in Q'$ then $\gamma(y) \subset Q'$.
		\end{enumerate}
		Moreover, $Q_x$ consists of all vertices $y \in V(\scrX)$ such that there is a sequence of vertices
		$(v_0, v_1, \ldots, v_n)$, such that $v_0 = x$, $v_n = y$ and $v_{j+1} \in \gamma(v_j)$ for all
		$j \in \{0, 1, \ldots, n-1\}$.
	\end{claim}
	\noindent
	For the proof, let us denote by $\tilde{Q}_x$ the intersection over all subsets of $V(\scrX)$ satisfying \eqref{en:1}
	and \eqref{en:2}. Observe that $\tilde{Q}_x \subseteq Q_x$. To see this it is enough to show that $Q_x$ satisfies
	$\eqref{en:2}$. If $y \in Q_x$ then there is $m \geqslant 1$ such that
	$y \in [x, u_n]$ for all $n \geqslant m$, thus by \eqref{eq:72} we have $\gamma(y) \subset [x, u_n]$ for all $n \geqslant m$.
	Next, to show that $Q_x \subseteq \tilde{Q}_x$, we prove that for each $y \in Q_x$ there is a sequence of vertices
	$(v_0, \ldots, v_m)$ such that $v_0 = x$, $v_m = y$ and $v_{j+1} \in \gamma(v_j)$ for all $j \in
	\{0, 1, \ldots, m-1\}$. We proceed by induction on the length of a minimal gallery between $x$ and $y$. It is trivial
	if the length equals $1$. Suppose that it holds true for all vertices $x$ and $y$ at a minimal gallery between them 
	of length $k \geqslant 2$. Let $x'$ be any vertex in $[x, y]$ other than $x$ and $y$. Since $y \in Q_x$, there is $n_0$ 
	such that $y \in [x, u_n]$ for all $n \geqslant n_0$. Hence, $x' \in [x, u_n]$ for all $n \geqslant n_0$, and so 
	$x' \in Q_x$. Since the length of a minimal gallery between $x$ and $x'$ is smaller than $k$, by the inductive hypothesis, 
	there is a sequence of vertices $(v'_0, \ldots, v'_{m'})$ such that $v'_0 = x$, $v'_{m'} = x'$, and 
	$v'_{j+1} \in \gamma(v'_j)$ for all $j \in \{0, \ldots, m'-1\}$. Moreover, $y \in [x', u_n]$ for all $n \geqslant n_0$ thus 
	$y \in Q_{x'}$. Since the length of a minimal gallery between $x'$ and $y$ is smaller than $k$, by the inductive hypothesis, 
	there is a sequence of vertices $(v''_0, \ldots, v''_{m''})$ such that $v''_0 = x'$, $v''_{m''} = y$, and 
	$v''_{j+1} \in \gamma(v''_j)$ for all $j \in \{0, \ldots, m''-1\}$. Therefore, the desired sequence is 
	$(v'_0, \ldots, v'_{m'}, v''_1, \ldots, v''_{m''})$ which completes the proof of the claim.

	In particular, by Claim \ref{clm:6}, the set $Q_x$ depends only on a vertex $x$ and the map $\gamma$. To finish the proof 
	of the theorem, let us assume that there are two core sequence $(x_n)$ and $(x_n')$ such that
	\[
		\lim_{n \to \infty} \gamma_{x_n} = \lim_{n \to \infty} \gamma_{x_n'} = \gamma.
	\]
	Thanks to Claim \ref{clm:6}, 
	\[
		\bigcup_{n = 1}^\infty [o, u_n] = Q_o = \bigcup_{n = 1}^\infty [o, u_n'],
	\]
	which easily leads to $c = c'$, $J = J'$ and $\omega' \sim_J \omega$.
\end{proof}

\section{Harmonic measures}
\label{s - max boundary}
In this section we introduce harmonic measures which we use for Furstenberg compactifications: there is one such probability
measure on the maximal boundary for each special point in the building. Then we study big cells in the maximal boundary from
topological and measure-theoretic view points. 

\subsection{Construction of harmonic measures}
\label{ss - harm meas}
The following proposition is well-known (see e.g. \cite{park2,mz1}). It introduces the family of harmonic measures that are
used to define the Furstenberg compactification. The subtlety is that we cannot stand by group actions and integration on
homogeneous spaces to define these measures. 

\begin{proposition}
	\label{prop:2}
	For every special vertex $x \in V_s$, there is a unique Borel probability measure $\nu_x$ on $\Omega$ such that
	for all $y, y' \in V_s$, if $\sigma(x, y) = \sigma(x, y')$ then
	\begin{equation}
		\label{eq:75}
		\nu_x\big(\Omega(x, y) \big) = \nu_x\big(\Omega(x, y')\big).
	\end{equation}
	Moreover, for $x, y \in V_s$ the measures $\nu_x$ and $\nu_y$ are mutually absolutely continuous. 
	When $x, y \in V_g$
	then the Radon--Nikodym derivative equals
	\begin{equation}
		\label{eq:12}
		\frac{{\rm d} \nu_y}{{\rm d} \nu_x} (\omega) = \chi\big(h(x, y; \omega)\big),
		\qquad \omega \in \Omega.
	\end{equation}
\end{proposition}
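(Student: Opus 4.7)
The plan is to construct $\nu_x$ by prescribing its values on the basis of shadows and extending via Carath\'eodory, then to derive the Radon--Nikodym formula \eqref{eq:12} by computing the ratio of measures on a shrinking neighborhood of $\omega$ using the cardinality formula \eqref{eq:8}.

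\emph{Construction.} For $x \in V_g$ I would set
\[
	\nu_x\bigl(\Omega(x,y)\bigr) := 1/N_{\sigma(x,y)} \qquad \text{for } y \in V_g,
\]
and analogously with $N_{\varepsilon;\sigma(x,y)}$ from \eqref{eq:5} for $y \in V_g^\varepsilon$. The consistency of this prescription reduces to a combinatorial identity: for $y \in V_\lambda(x)$ and $\mu \in P^+$, the shadow $\Omega(x,y)$ decomposes, up to a null set contained in walls, as the disjoint union of shadows $\Omega(x,z)$ with $z \in V_{\lambda+\mu}(x)$ and $y \in [x,z]$; the number of such $z$ equals $N_{\lambda+\mu}/N_\lambda$ thanks to \eqref{eq:8} and the building axioms (summing this count over $y \in V_\lambda(x)$ must recover $N_{\lambda+\mu}$, forcing the quotient formula by symmetry). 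Since each shadow is clopen in the compact space $\Omega$, finite additivity automatically extends to countable additivity on the generated Boolean algebra, and Carath\'eodory's theorem yields a Borel probability measure.

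\emph{Uniqueness.} Any $\nu'_x$ satisfying \eqref{eq:75} must assign measure $1/N_\lambda$ to each shadow $\Omega(x,z)$ with $z \in V_\lambda(x)$, because these $N_\lambda$ shadows partition $\Omega$ up to a null set and are equimeasurable by hypothesis. As shadows generate the Borel $\sigma$-algebra, this forces $\nu'_x = \nu_x$.

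\emph{Radon--Nikodym derivative.} Fix $x, y \in V_g$ and $\omega \in \Omega$, and pick a sequence $(w_n) \subset [x,\omega] \cap [y,\omega]$ such that $\sigma(x, w_n)$ and $\sigma(y, w_n)$ are eventually strongly dominant; by \eqref{eq:15}, $\sigma(x, w_n) - \sigma(y, w_n) = h(x,y;\omega)$. The shadow $\Omega(y, w_n)$ is a clopen neighborhood of $\omega$ shrinking to a singleton (modulo null sets), and a direct computation in a common apartment containing $x, y, w_n$ and the sectors towards $\omega$ yields
\[
	\nu_y\bigl(\Omega(y,w_n)\bigr) = 1/N_{\sigma(y,w_n)}, \qquad
	\nu_x\bigl(\Omega(y,w_n)\bigr) = 1/N_{\sigma(x,w_n)},
\]
the first by construction and the second by decomposing $\Omega(y, w_n)$ into shadows based at $x$, whose count collapses to a single term thanks to the trivial-stabilizer condition $W_{\sigma(x,w_n)} = W_{\sigma(y,w_n)} = \{e\}$ in the strongly dominant range. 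Applying \eqref{eq:8} with trivial Weyl denominators and invoking multiplicativity of $\chi$ gives
\[
	\frac{\nu_y\bigl(\Omega(y,w_n)\bigr)}{\nu_x\bigl(\Omega(y,w_n)\bigr)}
	= \frac{\chi\bigl(\sigma(x,w_n)\bigr)}{\chi\bigl(\sigma(y,w_n)\bigr)}
	= \chi\bigl(h(x,y;\omega)\bigr).
\]
By the Lebesgue differentiation theorem along this shrinking filtration, the ratio equals $d\nu_y/d\nu_x(\omega)$, proving \eqref{eq:12}. Since $\chi$ takes positive finite values, mutual absolute continuity follows. The mixed cases involving $V_g^\varepsilon$ are handled identically after the type twist $\varepsilon$.

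\emph{Main obstacle.} The crux is the combinatorial bookkeeping underlying both the consistency of the initial prescription and the identity $\nu_x(\Omega(y,w_n)) = 1/N_{\sigma(x,w_n)}$. Both rely on understanding how sector decompositions change under a bounded shift of the base vertex: they stabilize precisely once the test vertex $w_n$ sits deep in the interior of a Weyl chamber based at both $x$ and $y$, which is the regime where $W_\mu = \{e\}$ and the Weyl denominator in \eqref{eq:8} disappears.
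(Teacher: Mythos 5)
Your construction and uniqueness steps are fine and in fact follow the same route as the paper: the paper also prescribes the value $1/\#\{y':\sigma(x,y')=\sigma(x,y)\}$ on shadows and extends (via Riesz--Markov--Kakutani rather than Carath\'eodory, an immaterial difference), and the consistency bookkeeping you sketch by double counting is exactly what is implicitly needed there as well. The genuine gap is in your Radon--Nikodym step, at the identity $\nu_x\bigl(\Omega(y,w_n)\bigr)=1/N_{\sigma(x,w_n)}$. What this requires is that $\Omega(y,w_n)$ coincides with the single shadow $\Omega(x,w_n)$, and this is \emph{not} a consequence of the trivial-stabilizer condition $W_{\sigma(x,w_n)}=W_{\sigma(y,w_n)}=\{e\}$: triviality of $W_\lambda$ only removes the denominator $W_\lambda(q^{-1})$ in \eqref{eq:8}, i.e.\ it is what makes the ratio of the two measures a pure $\chi$-ratio, but it says nothing about whether the shadow based at $y$ collapses to one shadow based at $x$. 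Already in a tree this fails under your hypotheses: take $x$, $y$ and $\omega$ in three different directions from a branch vertex $w$; then $w\in[x,\omega]\cap[y,\omega]$ and $d(x,w)$, $d(y,w)$ are as large as you like (so ``strongly dominant''), yet an end going out past $y$ lies in $\Omega(x,w)$ but not in $\Omega(y,w)$. What is actually needed is that $w_n$ be deep in $[x,\omega]\cap[y,\omega]$ \emph{relative to the separation of $x$ and $y$} (past the ``confluence'' of the two sectors, so that every half-apartment containing $x$ and the subsector $[w_n,\omega]$ also contains $y$); this is precisely the geometric input the paper imports from \cite[Lemma 3.13]{park2} when it chooses a vertex $z$ with $\Omega(x,z)=\Omega(y,z)$ and $h(x,z;\omega)$, $h(y,z;\omega)$ strongly dominant. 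Since your $w_n$ tend to infinity along $[y,\omega]$, the identity does hold for $n$ large, but your proof neither states nor proves the fact that makes it hold, and the ``direct computation in a common apartment containing $x$, $y$, $w_n$ and the sectors towards $\omega$'' presupposes an apartment that in general does not contain both full sectors.

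Two smaller points. The ``by symmetry'' justification for the count $N_{\lambda+\mu}/N_\lambda$ is not available verbatim, since no group action is assumed; the independence of that count from $y\in V_\lambda(x)$ needs the same kind of regularity argument as \cite[Proposition 1.5]{park2} (it is true, but say why). And the mixed-type case is not ``handled identically after the type twist'': the paper's computation for $x\in V_g$, $y\in V_g^\varepsilon$ involves an extra chamber count (the set $Z'$ and the factor $q_{\varepsilon;w_0}/q_{\varepsilon;w_{0r}}$) and produces additional constants in the derivative; this does not affect \eqref{eq:12}, which is only claimed for $x,y\in V_g$, but it is the part of the argument that gives mutual absolute continuity for all special vertices, so it cannot simply be waved away.
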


The proof below gives in fact all Radon--Nikodym derivatives, but the formulation uses notions from
Section \ref{sec:8} such as the permutation $\varepsilon$. 

\begin{proof}
	Let us consider a map $\Lambda_x$ defined on the set of characteristic functions
	$\big\{\ind{\Omega(x, y)} : y \in V_s\big\}$ by the formula
	\begin{equation}
		\label{eq:76}
		\Lambda_x\big(\ind{\Omega(x, y)}\big) = \frac{1}{\#\{y' \in V_s : \sigma(x, y') = \sigma(x, y)\}}.
	\end{equation}
	Recall that for every $z \in V_s$, such that $\sigma(x, z) = \sigma(x, y) + \sigma(y, z)$, we have
	\[
		\# \{ z \in V_s : \sigma(x, z) = \sigma(x, z') \}
		=
		\# Z' \cdot \# \{y' \in V_s : \sigma(x, y') = \sigma(x, y)\}
	\]
	where $Z' = \{z' \in V_s : \sigma(x, z') = \sigma(x, z), \sigma(y, z') = \sigma(y, z) \}$. Since
	\[
		\Omega(x, y) = \bigsqcup_{z' \in Z'} \Omega(x, z'),
	\]
	we obtain
	\[
		\Lambda_x\big(\ind{\Omega(x, y)}\big) = \sum_{z' \in Z} \Lambda_x\big(\ind{\Omega(x, z')}\big).
	\]
	Consequently, $\Lambda_x$ extends to the linear operator acting on locally constant functions on $\Omega$.
	Since locally constant functions on $\Omega$ are dense in the space of continuous functions on $\Omega$, the operator 
	has a unique extension to a positive linear operator $\tilde{\Lambda}_x$
	defined on all continuous complex-valued functions on $\Omega$. Hence, by the Riesz--Markov--Kakutani theorem, there is a 
	unique regular Borel measure $\nu_x$ on $\Omega$ such that
	\[
		\tilde{\Lambda}_x(f) = \int_\Omega f(\omega) \nu_x ({\rm d} \omega)
	\]
	for any continuous function $f: \Omega \rightarrow \CC$. 

	To prove \eqref{eq:12}, we fix $x, y \in V_g$. Let $\omega \in \Omega$. By \cite[Lemma 3.13]{park2}, there is
	a good vertex $z \in [x, \omega] \cap [y, \omega]$, such that $\Omega(x, z) = \Omega(y, z)$. We may assume that
	$h(x, z; \omega)$ and $h(y, z; \omega)$ are strongly dominant co-weights. Then, by \eqref{eq:8} and \eqref{eq:15},
	\[
		\nu_x\big(\Omega(x, z)\big) = 
		\frac{1}{W(q^{-1})}
		\chi\big(-h(x, z; \omega)\big) = 
		\frac{1}{W(q^{-1})} \chi\big(-\sigma(x,z)\big)
	\]
	and
	\begin{align*}
		\nu_y\big(\Omega(x, z)\big) 
		&=\nu_y\big(\Omega(y, z)\big) \\
		&= \frac{1}{W(q^{-1})}
		\chi\big(-h(y, z; \omega)\big) = 
		\frac{1}{W(q^{-1})} \chi\big(-\sigma(y,z)\big).
	\end{align*}
	Hence,
	\begin{align*}
		\frac{\nu_y\big(\Omega(x, z)\big)}{\nu_x\big(\Omega(x, z)\big)}
		&=
		\chi\big(-h(y, z; \omega)\big) \chi\big(h(x, z; \omega)\big) \\
		&=
		\chi\big(h(x, y; \omega)\big)
	\end{align*}
	where we have used the cocycle relation \eqref{eq:35}. 
	
	If $x \in V_g$ and $z \in V_g^\varepsilon$, we 
	set
	\[
		Z' = \big\{z' \in V_g : \exists \omega \in \Omega(x, z) \text{ such that } h(z,z';\omega) = \tfrac{1}{2} \lambda_r 
		\big\}.
	\]
	Let $c$ be the unique chamber in $\St(z)$ which is the closest to $x$. Let $w_0$ and $w_{0 r}$ be the longest element
	in $W$ and $W_{\lambda_r}$, respectively. Each vertex $z' \in Z'$ belongs to a certain chamber from $\St(z)$ which
	is opposite to $c$. There are $q_{\varepsilon; w_0}$ chambers with this property. However, for a fixed $z' \in Z'$, 
	there are $q_{\varepsilon; w_{0r}}$ distinct chambers sharing vertices $z'$ and $z$ that are opposite to $c$. Hence,
	\[
		\# Z' = \frac{q_{\varepsilon; w_0}}{q_{\varepsilon; w_{0 r}}}.
	\]
	Since
	\[
		\Omega(x, z) = \bigsqcup_{z' \in Z'} \Omega(x, z'),
	\]
	we get
	\begin{align*}
		\nu_x(\Omega(x, z)) &= \sum_{z' \in Z'} \nu_x(\Omega(x, z')) \\
		&= 
		\frac{q_{\varepsilon; w_0}}{q_{\varepsilon; w_{0 r}}}
		\frac{1}{W(q^{-1})}
		\chi\big(-\tfrac{1}{2} \lambda_r \big) \chi(-h(x, z; \omega)).
	\end{align*}
	Therefore, for $y \in V_g^\varepsilon$ and $x \in V_g$ we obtain
	\[
		\frac{\nu_y\big(\Omega(x, z)\big)}{\nu_x\big(\Omega(x, z)\big)}
		=
		\frac{W(q^{-1})}{W(q_\varepsilon^{-1})} \frac{q_{\varepsilon; w_{0r}}}{q_{\varepsilon; w_{0}}}
		\chi\big(\tfrac{1}{2} \lambda_r)
		\chi\big(h(x, y; \omega)\big).
	\]
	This completes the proof.
\end{proof}

\begin{remark}
\label{rem:5}
	We can weaken the hypothesis in Proposition \ref{prop:2}, by imposing \eqref{eq:75} for all $y \in V_g$ having
    fixed type, say $\tau(y) = j \in I$. To see this, let $z \in V_g$ with $\tau(z) \neq j$. There is $y_0 \in V_g$ with 
    $\tau(y_0) = j$, such that
    \[
        \Omega(x, y_0) \subset \Omega(x, z).
    \]
    Fix $\omega_0 \in \Omega(x, y_0)$. Then
    \[
        Y = \big\{y \in V_g : \exists \omega \in \Omega(x, z) \text{ such that } h(x, y;\omega) = h(x, y_0; \omega_0) \big\}.
    \]
    Observe that each vertex $y \in Y$ has type $j$, and
    \[
        \Omega(x, z) = \bigsqcup_{y \in Y} \Omega(x, y).
    \]
    Now, the linear operator $\Lambda$ defined in \eqref{eq:76} for $y \in V_g$, $\tau(y) = j$, can be uniquely extended to
    $\ind{\Omega(x, z)}$, for any $z \in V_g$. The rest of the poof is unchanged.
\end{remark} 

The measures $(\nu_x : x \in V_g)$ naturally appear while studying harmonic functions with respect to
vertex averaging operators, see Section \ref{ss - Martin embeddings}. To be more precise, for $\lambda \in P^+$, 
and a function $F: V_g \rightarrow \CC$, we set
\begin{equation}
	\label{eq:66}
	A_\lambda F(x) = \frac{1}{N_\lambda} \sum_{y \in V_\lambda(x)} F(y), \qquad x \in V_g.
\end{equation}
Then for any $f \in L^\infty(\Omega)$, the function $F: V_g \rightarrow \CC$ defined by 
\[
	F(x) = \int_\Omega f(\omega) \nu_x({\rm d} \omega), \qquad x \in V_g,
\]
satisfies
\[
	A_\lambda F(x) = F(x),
\]
for all $x \in V_g$ and $\lambda \in P^+$. We have a similar characterization for vertices in $V_g^\varepsilon$.

\subsection{Disintegration of harmonic measures}
Let us fix $J \subset I_0$. We denote by $\Omega^J$ the set of all spherical residues of type $J$ and thus we have a quotient 
map $\pi^J : \Omega \twoheadrightarrow \Omega^J$ defined by $\omega \mapsto \res_J(\omega)$. To each $J$-residue $R$ is 
associated a facet $F$ of type $J$ in the spherical building at infinity: the intersection of all closed chambers at infinity
in $R$ is equal to $F$.  Moreover, according to Section \ref{sec:3.3}, to each $J$-residue $R$ we can
associate an affine building $\scrX_R = \scrX(F)$ and we have a homomorphism of buildings
\[
	\pi_R: \scrX \rightarrow \scrX_R, 
\]
together with a homeomorphism
\[
	\vphi_R: R \rightarrow \Omega_R
\]
identifying the residue $R$ with the maximal boundary $\Omega_R = \Omega_{\scrX(F)}$ of the outer fa\c cade $\scrX_R$
(Lemma \ref{lem:1}). 

Now pick in addition $x, y \in V_g$ and let $\omega \in \Omega(x,y)$. If $y'$ is another vertex, we say that $y'$ is 
\emph{$J$-related to $y$} (with respect to $R$) if there is an apartment containing $[x, \omega]$ and $y'$ such that $y'$
belongs to the orbit of the subgroup of the spherical Weyl group fixing the sector face of type $J$ contained in $[x, \omega]$
(this subgroup is also the subgroup fixing the facet $F$ in the corresponding apartment at infinity). We have then
$\sigma(x,y')=\sigma(x,y)$. We denote by $[y]_J$ the set of vertices which are $J$-related to $y$ with respect to $R$. 
The projection $\pi_R |_{[y]_J}$ provides a bijection between $[y]_J$ and the vectorial sphere in $\scrX_R$ centered at
$\pi_R(x)$ and of radius $P_J\sigma(x,y)$; we denote by $N^J_{P_J \sigma(x, y)}$ the common cardinality of these two sets. 

We introduce on $\Omega^J$ a probability measure $\eta^J_x$ by setting
\begin{equation}
	\label{eq:37}
	\eta^J_x\big(\Omega(x, y)^J \big) = \frac{N^J_{P_J \sigma(x, y)}}{N_{\sigma(x, y)}}
\end{equation}
where
\[
	\Omega(x, y)^J = \Big\{R \in \Omega^J : \Omega(x, y) \cap R \neq \varnothing \Big\}. 
\]
Analogously to the proof of Proposition \ref{prop:2} one can show that the measures $\eta^J_x \in \mathcal{P}(\Omega^J)$ are
well-defined for each $J \subset I_0$ and each $x \in V_g$.  Having in mind the partition 
\[
	\Omega = \bigsqcup_{R \in \Omega^J} R = \bigsqcup_{R \in \Omega^J} \varphi_R^{-1}(\Omega_R), 
\]
we want to disintegrate each harmonic measure $\nu_x \in \mathcal{P}(\Omega)$ by integrating first on each
$J$-residue -- seen as the maximal boundary of the corresponding outer fa\c cade -- against a suitable harmonic measure of 
the fa\c cade and by using the measure $\eta^J_x$ for the second integration step. 

\begin{proposition}
\label{prop - disintegration}
	With the above notation, we have the equality of measures $\eta^J_x = (\pi^J)_*\nu_x$ in $\mathcal{P}(\Omega^J)$. 
	Moreover for each Borel set $A \subset \Omega$, we have the disintegration formula 
	\begin{equation}
		\label{eq:19}
		\nu_x(A) = \int_{A^J} \nu_{\pi_R(x)}\big(\vphi_R(R \cap A)\big) \: \eta_x^J({\rm d} R)
	\end{equation}
	where $\nu_{\pi_R(x)}$ is the harmonic measure on the maximal boundary $\Omega_R = \vphi_R(R)$ of the outer fa\c cade
	$\scrX_R$ attached to the vertex $\pi_R(x)$. 
\end{proposition}
\begin{proof} 
	In order to show the equality $\eta^J_x = (\pi^J)_*\nu_x$ in $\mathcal{P}(\Omega^J)$, it is enough to prove that for any 
	$y \in V_g$ we have 
	\begin{equation}
		\label{eq:24}
		((\pi^J)_*\nu_x)\big(\Omega(x, y)^J \big) = \frac{N^J_{P_J \sigma(x, y)}}{N_{\sigma(x, y)}},
	\end{equation}
	that is 
	\[
		\nu_x\big((\pi^J)^{-1}(\Omega(x, y)^J)\big) = \frac{N^J_{P_J \sigma(x, y)}}{N_{\sigma(x, y)}}.
	\]
	We first note that 
	\[
		\Omega(x, y)^J 
		= \big\{R \in \Omega^J : \Omega(x, y) \cap R \neq \varnothing \big\}
		= \big\{\res_J(\omega) : \omega \in \Omega(x, y) \big\}, 
	\]
	which implies that 
	\[
		(\pi^J)^{-1}(\Omega(x, y)^J) = \bigcup_{\omega \in \Omega(x,y)} \res_J(\omega). 
	\]
	We claim that
	\begin{equation}
		\label{eq:2}
		(\pi^J)^{-1}(\Omega(x, y)^J) = \bigsqcup_{y' \in [y]_J} \Omega(x,y').
	\end{equation}
	Since $\# [y]_J = N^J_{P_J \sigma(x, y)}$ and each shadow $\Omega(x, y')$ with $y' \in [y]_J$ has the same $\nu_x$-mass,
	the equality \eqref{eq:2} immediately leads to \eqref{eq:24}. To prove \eqref{eq:2}, we notice that if 
	$\omega' \in \res_J(\omega)$ with $\omega \in \Omega(x,y)$, then the sectors $[x,\omega]$ and $[x,\omega']$ share their 
	vectorial face of type $J$. Hence, the stabilizer in the Weyl group of that vectorial face, $W_J$ say, sends $y$ to a vertex 
	$y'$ contained in $[x,\omega']$ and thus $y' \in [y]_J$. Conversely, if $\omega'$ belongs to a shadow $\Omega(x,y')$ with
	$y' \in [y]_J$, then we can pick an apartment containing $[x,\omega']$ and $y$. In this apartment $W_J$ sends the latter
	sector to a $J$-adjacent one which contains $y$, so that denoting this sector by $[x,\omega]$ we see that
	$\omega' \in \res_J(\omega)$ with $\omega \in \Omega(x,y)$, as desired. 

	We turn now to the proof of the disintegration formula. Let us consider the Borel measure $\mu$ on $\Omega$ defined by 
	\[
		\mu\big(A\big) 
		=
		\int_{A^J} \nu_{\pi_R(x)}\big( \vphi_R(R \cap A) \big) \: \eta_x^J({\rm d} R)
	\]
	where $ A^J = \big\{R \in \Omega^J : R \cap A \neq \varnothing \big\}$. Since for each $x, y \in V_g$ and
	$R \in \Omega(x, y)^J$, by Lemma \ref{lem:6} we have 
	\[
		\sigma_R\big(\pi_R(x), \pi_R(y)\big) = P_J \sigma(x, y),
	\]
	thus
	\begin{align*}
		\mu(\Omega(x,y)) 
		&= 
		\int_{\Omega(x, y)^J} \nu_{\pi_R(x)}\Big(\Omega_R\big(\pi_R(x), \pi_R(y)\big) \Big) \eta^J_x({\rm d} R) \\
		&=
		\int_{\Omega(x, y)^J} \Big(N_{\sigma_R(\pi_R(x), \pi_R(y))}^J\Big)^{-1} \, \eta^J_x({\rm d} R)  \\
		&=
		\big(N^J_{P_J \sigma(x, y)}\big)^{-1} \int_{\Omega(x, y)^J} \eta^J_x({\rm d} R) \\
		&=
		\big(N_{\sigma(x, y)}\big)^{-1} \\
		&= 
		\nu_x(\Omega(x,y)). 
	\end{align*}
	Finally, by Proposition \ref{prop:2} characterizing harmonic measures, we have $\mu = \nu_x$. 
\end{proof} 

\subsection{Big cells}
In the study of convergence of unbounded sequences of harmonic measures, we will make use of analogues of contraction arguments 
in the case of symmetric spaces and Bruhat--Tits buildings. The subspaces of $\Omega$ on which the arguments are valid 
are analogues of the so-called big cells in flag varieties. The following fact saying that
big cells are co-null in maximal boundaries, has already been proved in \cite[Proposition 2.13]{HHL}; 
however our proof differs from the one given in [loc. cit.] in the sense that it stays inside the building,
without using points at infinity.

\begin{theorem}
	\label{thm:1}
	For each $\omega_0 \in \Omega$ and $x \in V_s$, we have
	$\nu_x\big(\Omega'(\omega_0)\big) = 1$. 
\end{theorem}
\begin{proof} 
	Without loss of generality we assume that $x \in V_g$. Given
	$y \in V_g$, we denote by $\Omega_y(\omega_0)$ the set of all $\omega \in \Omega$ such that there is an apartment $\scrA$
	containing both sectors $[y, \omega_0]$ and $[y, \omega]$. Then
	\[
		\Omega = \bigcup_{y \in V_g} \Omega_y(\omega_0),
	\]
	and so
	\begin{equation}
		\label{eq:6}
		\Omega \setminus \Omega'(\omega_0)
		= 
		\bigcup_{y \in V_g} \Omega_y(\omega_0) \setminus \Omega'_y(\omega_0)
	\end{equation}
	where we have set $\Omega'_y(\omega_0) = \Omega'(\omega_0) \cap \Omega_y(\omega_0)$. Since the set of vertices is countable, 
	it is therefore enough to show that for all $y \in V_g$,
	\begin{equation}
		\label{eq:36}
		\nu_x\big(\Omega_y(\omega_0) \setminus \Omega'_y(\omega_0) \big) = 0.
	\end{equation}
	Since the measures $\nu_x$ are mutually absolutely continuous, it is sufficient to consider $x = y = o$. 
	
	Next, let us observe that
	\begin{equation}
		\label{eq:18}
		\Omega_o(\omega_0) \setminus \Omega'_o(\omega_0) =
		\bigcap_{n = 0}^\infty \bigcup_{\stackrel{w \in W}{w \neq w_0}} \bigcup_{y \in B^{n\rho}_w} \Omega(o, y).
	\end{equation}
	where for $w \in W$ and $\lambda \in P^+$, by $B^\lambda_w$ we denote the set of all $x \in V_g$ such that
	$h(o, x; \omega_0) = w . \lambda$ and there is an apartment containing $[o, \omega_0]$ and $x$. 
	It is easy to verify the inclusion $\subseteq$. To show the reverse one, let us consider $\omega$ belonging to 
	the right-hand side of \eqref{eq:18}. Then there are sequences $(y_n : n \in \NN)$ and $(w_n : n \in \NN)$ such that
	for each $n \in \NN$, we have $w_n \in W \setminus \{w_0 \}$, $y_n \in B^{n\rho}_{w_n}$ and $\omega \in \Omega(o, y_n)$.
	Therefore,
	\[
		\omega \in \bigcap_{n = 1}^\infty \Omega(o, y_n).
	\]
	In particular, $[o, y_n] \subset [o, y_{n+1}] \subset [o, \omega]$. 
	Now, let $\Lambda_n$ be the set of all sectors $[o, \omega']$, with $\omega' \in \Omega_o'(\omega_0)$, such that
	the apartment $[\omega_0, \omega']$ contains $y_n$. Since $y_n \in B^{n\rho}_{w_n}$, each set $\Lambda_n$ is non-empty. 
	Moreover, $\Lambda_{n+1} \subset \Lambda_n$ because the convex hull of $o$ and $y_{n+1}$ contains $y_n$.
	Hence, by compactness of $\Omega$, there is $\omega'_0$ such that the apartment $[\omega_0, \omega_0']$ contains
	the convex hull of $o$ and $y_n$ for all $n \in \NN$, thus it contains $[o, \omega]$. This proves \eqref{eq:18}.

	Notice that for a fixed $n \in \NN$, the sets $\Omega(o, y)$ are disjoint provided that
	$\sigma(o, y) = n\rho$, thus by \eqref{eq:18}
	\begin{align*}
		\nu_o\big(\Omega_o(\omega_0) \setminus \Omega'_o(\omega_0)\big)
		&=
		\lim_{n \to \infty}
		\nu_o\Big(\bigcup_{\stackrel{w \in W}{w \neq w_0}} \bigcup_{y \in B^{n\rho}_w} \Omega(o, y)\Big) \\
		&=
		\lim_{n \to \infty}
		\sum_{\stackrel{w \in W}{w \neq w_0}} \sum_{y \in B^{n\rho}_w} \nu_o\big(\Omega(o, y)\big).
	\end{align*}
	Therefore to finish the proof of the theorem we show the following claim.
	\begin{claim}
		\label{clm:2}
		For each $w \in W$, $w \neq w_0$, we have
		\[
			\lim_{n \to \infty} \frac{\# B^{n\rho}_w}{N_{n\rho}} = 0.
		\]
	\end{claim}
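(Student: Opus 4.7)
My plan is to bound $\# B^{n\rho}_w$ from above by a product of powers of the thickness parameters $q_\alpha$ and then compare with the exact formula \eqref{eq:8} for $N_{n\rho}$: the decay factor making the ratio vanish appears precisely when $w \neq w_0$. As a preliminary observation, every $y \in B^\lambda_w$ lies, by definition, in some apartment $\scrA$ containing the sector $[o, \omega_0]$, and under any type-rotating isomorphism $\psi_\scrA : \scrA \to \frakA$ sending $[o, \omega_0]$ to $S_0$ and $o$ to $0$ one has $\psi_\scrA(y) = h(o,y;\omega_0) = w.\lambda$. In particular $\sigma(o, y) = \lambda$, so $B^\lambda_w \subseteq V_\lambda(o)$, and the retraction $\rho_{\scrA_0}^{o, \omega_0}$ onto any fixed apartment $\scrA_0 \supset [o, \omega_0]$ is constant on $B^\lambda_w$, sending every element to the specific vertex of $\scrA_0$ at Busemann position $w.\lambda$.

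The key combinatorial step is to establish an upper bound of the form
\[
    \# B^\lambda_w \leqslant C_w \prod_{\alpha \in \Phi^+ \cap w\Phi^-} q_\alpha^{\sprod{\lambda}{\alpha}}
\]
for a constant $C_w$ depending only on $w$. To this end, one attaches to every $y \in B^\lambda_w$ a minimal gallery from the chamber at $o$ pointing toward $\omega_0$ to a chamber at $y$, realized in some apartment $\scrA \supset [o, \omega_0]$ that contains $y$. Such a gallery has a type $\vec{s}(w, \lambda)$ fixed by the pair $(w, \lambda)$, and its wall-crossings split into two families: those with positive root $\alpha \in \Phi^+ \cap w\Phi^-$, at which the thickness of the building permits $q_\alpha$ distinct continuations, and those with $\alpha \in \Phi^+ \cap w\Phi^+$, at which compatibility with $\rho_{\scrA_0}^{o, \omega_0}(y) = w.\lambda$ forces a unique continuation. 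An induction on $\ell(w)$ then delivers the stated bound.

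Since $n\rho$ is strongly dominant we have $W_{n\rho} = \{e\}$, and \eqref{eq:8} yields $N_{n\rho} = W(q^{-1}) \prod_{\alpha \in \Phi^+} q_\alpha^{\sprod{n\rho}{\alpha}}$; therefore
\[
    \frac{\# B^{n\rho}_w}{N_{n\rho}}
    \leqslant
    \frac{C_w}{W(q^{-1})} \prod_{\alpha \in \Phi^+ \cap w\Phi^+} q_\alpha^{-\sprod{n\rho}{\alpha}}.
\]
For $w \neq w_0$ there is at least one simple root $\alpha_i$ with $w^{-1}\alpha_i \in \Phi^+$, equivalently $\alpha_i \in \Phi^+ \cap w\Phi^+$: otherwise $w^{-1}$ sends every simple root into $\Phi^-$, hence all of $\Phi^+$ into $\Phi^-$, which forces $w = w_0$. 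Since $\sprod{n\rho}{\alpha_i} = n$ and $q_i > 1$, the product contains the factor $q_i^{-n} \to 0$, finishing the proof. The main obstacle lies in the combinatorial estimate of the second step: one has to track carefully, in each relevant apartment, which wall-crossings genuinely benefit from the thickness of the building and which are pinned down by compatibility with the retraction. In the Bruhat--Tits case this amounts to a cell count in the Iwasawa decomposition of $\mathbf{G}(k)$, but here a purely building-theoretic substitute is needed.
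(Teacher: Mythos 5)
Your overall architecture is sound: reducing the claim to an upper bound on $\# B^{n\rho}_w$ of the form $C_w\prod_{\alpha}q_\alpha^{(\text{linear in }n)}$ with the product taken over an inversion set, comparing with $N_{n\rho}=\frac{W(q^{-1})}{W_{n\rho}(q^{-1})}\chi(n\rho)$, and observing that for $w\neq w_0$ some simple root survives in $\Phi^+\cap w\Phi^+$ (your argument for this last point is correct) would indeed finish the proof. The problem is that the load-bearing step — the combinatorial bound $\# B^\lambda_w \leqslant C_w \prod_{\alpha \in \Phi^+ \cap w\Phi^-} q_\alpha^{\sprod{\lambda}{\alpha}}$ — is not proved. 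The dichotomy of wall-crossings ("$q_\alpha$ continuations at crossings with $\alpha\in\Phi^+\cap w\Phi^-$, a forced continuation at crossings with $\alpha\in\Phi^+\cap w\Phi^+$") is exactly the statement that needs a careful building-theoretic argument (one must show that the constraint "the endpoint lies in an apartment containing all of $[o,\omega_0]$" pins down the choice at the latter crossings, for every admissible apartment, not just for a fixed one), and the promised "induction on $\ell(w)$" is not carried out; you acknowledge this yourself in the last sentence. There is also a mismatch in the exponents: the number of $\alpha$-walls separating $o$ from the retracted position $w.\lambda$ is $\abs{\sprod{\lambda}{w^{-1}\alpha}}$, not $\sprod{\lambda}{\alpha}$, so the bound your crossing-count would naturally produce is $\prod_{\alpha\in\Phi^+\cap w\Phi^-}q_\alpha^{-\sprod{\lambda}{w^{-1}\alpha}}$ (equivalently a product over the inversion set of $w$ rather than of $w^{-1}$); as stated for general dominant $\lambda$ your inequality can fail. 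This slip is harmless for $\lambda=n\rho$, where either form gives a total exponent strictly smaller than that of $\chi(n\rho)$ unless $w=w_0$, but it confirms that the counting argument has not been pushed through. (Two cosmetic points: in the non-reduced case $\chi$ involves the parameters $\tau_\alpha$ rather than $q_\alpha$, and the whole claim sits inside a proof that covers that case too.)

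For comparison, the paper avoids the global horospherical count altogether: it proceeds by a one-step recursion from $B^{n\rho}_w$ to $B^{(n+1)\rho}_w$, showing that each $y\in B^{(n+1)\rho}_w$ determines a unique $x\in B^{n\rho}_w$ and that the number of admissible $y$ over a given $x$ is at most $q_\beta^{-1}\chi(\rho)$, where $\beta$ comes from a wall through $x$ separating $o$ from $\omega_0$ — and the existence of such a wall is precisely where $w\neq w_0$ enters. Since $N_{(n+1)\rho}/N_{n\rho}=\chi(\rho)$, the ratio loses a fixed factor $q_\beta^{-1}<1$ at every step. If you want to keep your route, you would need to actually prove the horospherical cell count (essentially the building-theoretic analogue of the Iwasawa cell count, in the spirit of Parkinson's computations), with the exponents indexed by the correct inversion set; otherwise the paper's local recursion is the cheaper way to the same conclusion.
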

	\noindent
	For the proof, let us notice that for a given $y \in B^{(n+1)\rho}_w$ there is exactly one $x \in B^{n\rho}_w$ such 
	there is an apartment containing $[o, \omega_0]$, $x$ and $y$. Therefore, the problem is reduced to estimating
	the number of $y$'s that corresponds to a given $x$. To do so, we observe that there is a wall passing through $x$ so 
	that $o$ and $\omega_0$ are on its opposite sides. Let $\beta \in \Phi^+$ be the corresponding root. Since a convex
	hull of $[o, \omega_0]$ and $x$ is contained in the intersection over all half-apartments having $x$ on
	the boundary and containing the sector $[o, \omega_0]$, the intersection of the link of $x$ with the convex hull
	contains at least two chambers that are $\beta$ adjacent. Let $c_0$ and $c_1$ be the chambers with a vertex $x$ that
	are the closest to $o$ and $\omega_0$, respectively. Each minimal gallery between $x$ and $y$ starts with a certain
	chamber $c'$ having a vertex $x$ and which is opposite to $c_0$. All minimal galleries have the same type $f$.
	For a given $y$ the chamber $c'$ is unique. Moreover, for each $c'$ there is the minimal gallery between $c_0$ and
	$c'$ which contains $c_1$. Therefore, there are at most $q_\beta^{-1} q_{w_0}$ possible choices for $c'$, and hence
	there are at most $q_\beta^{-1} q_{w_0} q_{w_f}$ vertices $y \in B^{(n+1)\rho}_w$ such that the apartment containing 
	$y$ and $[o, \omega_0]$ also contains $x$. Consequently,
	\[
		\#B^{(n+1)\rho}_w \leqslant \# B^{n\rho}_w \cdot q_\beta^{-1} \chi(\rho).
	\]
	In view of \eqref{eq:8}, we get
	\[
		\frac{\#B^{(n+1)\rho}_w}{N_{(n+1)\rho}} \leqslant q^{-1}_\beta \frac{\#B^{n\rho}_w}{N_{n\rho}},
	\]
	which complete the proof of the claim.

	Now, using Claim \ref{clm:2} and Proposition \ref{prop:2}, we get
	\[
		\nu_o\big(\Omega_o(\omega_0) \setminus \Omega'_o(\omega_0)\big)
		=
		\sum_{\stackrel{w \in W}{w \neq w_0}} \lim_{n \to \infty} \frac{\#B^{n\rho}_w}{N_{n\rho}} = 0
	\]
	which proves \eqref{eq:36}, and the theorem follow.
\end{proof}

\begin{corollary}
	For each $\omega_0 \in \Omega$, the big cell $\Omega'(\omega_0)$ is dense in $\Omega$.
\end{corollary}
\begin{proof}
	Indeed, otherwise there are $\omega' \in \Omega \setminus \Omega'(\omega_0)$ and its neighborhood $\Omega(x, y)$
	for certain $x \neq y$, $x, y \in V_g$, such that $\Omega(x, y) \subset \Omega \setminus \Omega'(\omega_0)$. 
	Hence, by Theorem \ref{thm:1}
	\[
		0 < \nu_x\big(\Omega(x, y)\big) \leqslant \nu_x\big(\Omega \setminus \Omega'(\omega_0) \big) = 0
	\]
	which leads to contraction.
\end{proof}

\section{Furstenberg compactification}
\label{s - Furst comp} 
In this section we describe Furstenberg compactifications. It originates in the study of harmonic functions
on semisimple Lie groups, see \cite{Furstenberg1963}. For Bruhat--Tits buildings associated with semisimple groups
over local fields, it has been constructed in \cite{gure}. Our approach is purely geometric and provides the maximal
Furstenberg compactification for a large class of affine buildings including exotic ones.

Let $\calP(\Omega)$ be the space of Borel probability measures on $\Omega$ endowed with the weak-$*$ topology. 
An automorphism $g \in \Aut(\scrX)$ acts on a measure $\nu \in \calP(\Omega)$ by pushing forward, that is 
\begin{align*}
	(g . \nu)(A) &= (g_* \nu)(A) \\
	&= \nu\big(g^{-1} . A\big)
\end{align*}
for any Borel set $A$. 

We define
\begin{alignat*}{1}
	\iota: V_s & \longrightarrow \calP(\Omega) \\
	x & \longmapsto \nu_x
\end{alignat*}
where $\nu_x$ is the harmonic measure defined in Proposition \ref{prop:2}. The map $\iota$ gives an equivariant 
embedding of $V_s$ into the space of Borel probability measures on $\Omega$. Moreover, $\iota(V_s)$ is discrete in
$\calP(\Omega)$ since there is $\epsilon > 0$ such that the set
\[
	\bigcap_{j = 1}^r \bigcap_{y_j \in V_{\lambda_j}(x)} \left\{\nu \in \calP(\Omega) : 
	\nu(\Omega(x, y_j)) > (1-\epsilon) \nu_x(\Omega(x, y_j))\right\}
\]
is open and its intersection with $\iota(V_s)$ gives the singleton $\nu_x$. 
Let $\clr{\scrX}_{F}$ be the closure of $\iota(V_s)$ in $\calP(\Omega)$. Then $\clr{\scrX}_{F}$
equipped with the induced topology is a compact Hausdorff space called the Furstenberg compactification of $\scrX$.
Let us describe the structure of $\clr{\scrX}_{F}$. Since $\calP(\Omega)$ is metrizable, it is sufficient to consider sequences
approaching infinity $(x_n)$ such that $(\iota(x_n))$ converges in $\calP(\Omega)$. Furthermore, in view of Lemma \ref{lem:2},
we restrict our attention to core sequences.

\subsection{Degenerations of harmonic measures}
\label{ss - limit harm}
In order to state our main theorem on convergence of sequences of harmonic measures (which is Theorem \ref{th - harm conv}
in Introduction), it is convenient to use the notions and terminology introduced in Section \ref{sec:6}, including
fa\c cades indexed by spherical facets, {\it i.e.} residues at infinity. We use freely the notation of Section \ref{sec:3.3}.
Given $R$ a $J$-residue in the building at infinity we define the map
\[
	\begin{alignedat}{1}
		\phi_R: \Omega &\longrightarrow \Omega_R \\
		\omega &\longmapsto \vphi_R\big(\proj_R (\omega)\big). 
	\end{alignedat}
\]
where $\proj_R(\omega)$ is the unique chamber closest to $\omega$ in $R$.
\begin{theorem}
	\label{thm:5}
	Suppose that $(x_n)$ is an $(\omega, J, c)$-core sequence. Let $R$ be the $J$-residue in $\scrX^\infty$ containing
	$\omega$ and let $F$ be the corresponding spherical facet. We denote by $\scrX(F)$ the fa\c cade associated with $F$,
	and by $x_F$ the vertex in $\scrX(F)$ defined by $(x_n)$. Then the sequence of harmonic measures
	$(\nu_{x_n})$ weakly converges in $\calP(\Omega)$ to the measure $\mu$ characterized by the following two conditions: 
	\begin{enumerate}[label=(\roman*), ref=\roman*]
	\item \label{en:5:1}
	the support ${\rm supp}(\mu)$ is equal to the residue $\res_J(\omega)$;
	\item \label{en:5:2} 
	the measure $(\phi_{R})_*\mu$ is the harmonic measure on the maximal boundary of $\scrX(F)$ attached to the vertex
	$x_F$. 
	\end{enumerate} 
	Moreover, if $(x_n')$ is an $(\omega', J', c')$-core sequence such that $(\nu_{x_n'})$ weakly converges to the same limit
	$\mu$ as above, then we have $J' = J$, $\omega' \sim_J \omega$, and $c = c'$.
\end{theorem}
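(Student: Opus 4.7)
The plan is to identify weak limits of $(\nu_{x_n})$ by explicit computation on the basic clopen sets $\Omega(o,y)$ of $\Omega$, using the Radon--Nikodym formula for harmonic measures, and then to match the resulting limit with the harmonic measure on the facade via the homeomorphism $\varphi_J$ from Lemma \ref{lem:1}. The space $\calP(\Omega)$ is compact and metrizable in the weak-$*$ topology, so any subsequence of $(\nu_{x_n})$ admits a weakly convergent sub-subsequence; the basic clopen sets $\Omega(o,y)$ have empty boundary and are therefore continuity sets for every Borel probability measure. Consequently it suffices to compute the numerical limits $\lim_{n \to \infty} \nu_{x_n}(\Omega(o,y))$ for $y \in V_s$ and check that they agree with the values, on the same clopens, of the candidate measure $\mu$ described in \eqref{en:5:1}--\eqref{en:5:2}.

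For the inclusion $\supp(\mu) \subseteq {\rm res}_J(\omega)$, Corollary \ref{cor - residues are closed} makes the residue closed, so its complement is open. Given $\omega'' \in \Omega \setminus {\rm res}_J(\omega)$, the total disconnectedness of $\Omega$ lets us pick a clopen $\Omega(o,y) \ni \omega''$ disjoint from ${\rm res}_J(\omega)$. Proposition \ref{prop:2} gives
\[
	\nu_{x_n}\bigl(\Omega(o,y)\bigr) = \int_{\Omega(o,y)} \chi\bigl(h(o,x_n;\omega')\bigr) \, d\nu_o(\omega'),
\]
and I would show that this tends to $0$. The geometric content is that for every $\omega' \in \Omega(o,y)$ some root $\alpha \in \Phi^+ \setminus \Phi_J$ separates $\omega'$ from $\omega$ in a specific way: a Lemma \ref{lem:5}-type argument shows $\sprod{h(o,x_n;\omega')}{\alpha}$ stays bounded while $\sprod{\sigma(o,x_n)}{\alpha} \to \infty$, so the integrand vanishes pointwise. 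To make this estimate uniform, one decomposes $\Omega(o,y)$ into the level sets $\Theta(o,\omega_0;\mathbf{n})$ from Proposition \ref{prop:3}, on which the horocycle is exactly constant, and bounds each piece using the factor $\tau_\alpha^{\sprod{h(o,x_n;\omega')}{\alpha}}$ with $\tau_\alpha > 1$.

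For the pushforward identification, given a basic clopen $\Omega_{\scrX(F_J)}(x_c,y_\infty)$ in the facade with $y_\infty$ a good vertex of $\scrX(F_J)$, Lemma \ref{lem:4} produces a lift $y \in V_g$ in an apartment containing $[o,\omega]$ and satisfying $\sigma_{\scrX(F_J)}(x_c,y_\infty) = P_J(\sigma(\bar z,y))$ for a suitable $\bar z$. Choosing $y$ appropriately we arrange $\Omega(o,y) \cap {\rm res}_J(\omega) = \varphi_J^{-1}(\Omega_{\scrX(F_J)}(x_c,y_\infty))$. By Theorem \ref{thm:7}, $\nu_{x_n}(\Omega(o,y))$ depends only on $\sigma(o,x_n)$ and $\sigma(o,y)$, and a direct computation using formula \eqref{eq:8} in both the ambient building and the facade shows that this stabilizes for large $n$ at the value $\nu_{x_c}^{\scrX(F_J)}\bigl(\Omega_{\scrX(F_J)}(x_c,y_\infty)\bigr)$. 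Combined with the support step this yields $(\varphi_J)_*\mu = \nu_{x_c}^{\scrX(F_J)}$; since harmonic measures have full support on $\Omega_{\scrX(F_J)}$, this also gives the reverse inclusion $\supp(\mu) \supseteq {\rm res}_J(\omega)$, closing \eqref{en:5:1}--\eqref{en:5:2}.

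Uniqueness is then formal: two core sequences yielding the same limit $\mu$ share the same support, giving $J' = J$ and ${\rm res}_{J'}(\omega') = {\rm res}_J(\omega)$, i.e. $\omega' \sim_J \omega$; equality of pushforwards then identifies the vertices $x_{(\omega,J,c)}$ and $x_{(\omega',J,c')}$ via the injectivity of the Furstenberg assignment $x \mapsto \nu_x$ applied to the facade $\scrX(F_J)$ (a consequence of Proposition \ref{prop:2} applied there), and this forces $c = c'$. The hardest part of the plan is the uniform support-step estimate: one must combine the totally disconnected structure of $\Omega$, the horocycle cocycle relation \eqref{eq:35}, and the level-set decomposition of Proposition \ref{prop:3} so that the vanishing of the Radon--Nikodym integrand holds simultaneously for all $\omega'$ in a single clopen neighborhood disjoint from the residue.
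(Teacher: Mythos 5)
Your overall two-step architecture (support contained in the residue, then identification of the pushforward, then uniqueness) is the same as the paper's, and your pushforward/uniqueness part is close to the paper's Step 2 -- with the caveat that the paper does not perform a ``direct computation using \eqref{eq:8} in both buildings'': it shows via Theorem \ref{thm:7} that the limiting values on shadows depend only on $\sigma_{\scrX(F_J)}(x_{(\omega,J,c)},y_\infty)$ and then invokes the uniqueness characterization of harmonic measures (Proposition \ref{prop:2} together with Remark \ref{rem:5}), which is the step your sketch would still have to supply.

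The genuine gap is in the support step. Your central claim -- that for every $\omega'$ in a clopen set disjoint from ${\rm res}_J(\omega)$ the integrand $\chi\bigl(h(o,x_n;\omega')\bigr)$ vanishes pointwise -- is false, and the inference ``$\sprod{h(o,x_n;\omega')}{\alpha}$ stays bounded, hence the integrand vanishes'' is invalid: the integrand is $\prod_\beta \tau_\beta^{\sprod{h(o,x_n;\omega')}{\beta}}$, so boundedness of one coordinate says nothing when other coordinates grow. Concretely, in a thick $\widetilde{\mathrm{A}}_2$ building take $J=\varnothing$, $x_n = n(\lambda_1+\lambda_2)\in[o,\omega]$, and let $\omega'$ be the chamber of an apartment containing $[o,\omega]$ that is adjacent to $\omega$ across the $\alpha_1$-panel; then $\omega'\notin{\rm res}_\varnothing(\omega)=\{\omega\}$, a computation in that apartment gives $h(o,x_n;\omega') = r_1 . \bigl(n(\lambda_1+\lambda_2)\bigr) = n(-\lambda_1+2\lambda_2)$, and hence $\chi\bigl(h(o,x_n;\omega')\bigr)=q^{2n}\to\infty$. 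So on any clopen neighborhood of such an $\omega'$ disjoint from the residue the densities are unbounded, dominated convergence does not apply, and since the limit measure concentrates on the $\nu_o$-negligible set ${\rm res}_J(\omega)$, no argument based only on ($\nu_o$-a.e.) pointwise decay of the densities can yield $\nu_{x_n}(U)\to 0$; the whole difficulty is to control the contribution of the small regions where the density blows up. This is exactly what the paper's Step 1 is built for: it projects $\omega_1$ onto the residue to get $\omega_0$, first restricts to the big cell $\Omega'(\omega_0)$ (Theorem \ref{thm:1}), where the densities are bounded by $1$ and dominated convergence is legitimate, and then runs an induction on the number of separating roots $\#A(\omega_1)$, handling the nested exceptional sets $\Xi(\beta,j)$ by the geometric decay estimate \eqref{eq:24} (obtained by passing the $\nu_{x_n}$-inequalities to the limit, using that the $\Xi(\beta,j)$ are clopen) together with the Borel--Cantelli lemma. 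None of these ingredients appears in your sketch, and the appeal to the level sets of Proposition \ref{prop:3} with ``$\tau_\alpha>1$'' does not substitute for them; also, Lemma \ref{lem:5} concerns $y_n\in[o,\omega]$ and the limit $\sprod{\sigma(o,y_n)-\sigma(x,y_n)}{\alpha}\to\sprod{h(o,x;\omega)}{\alpha}$, not the bound you attribute to it.
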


\begin{proof}
	The proof uses the weak-$*$ compactness of the set $\calP(\Omega)$ of probability measures on the compact metrizable maximal
	boundary $\Omega$. Convergence is obtained by proving uniqueness of the cluster value in two steps. The first one shows that
	any cluster value $\mu$ of a sequence of harmonic measures as above has its support contained in the residue $R$, which
	allows us then to push $\mu$ by the homeomorphism $\varphi_{R}$ in order to prove, in the second step, that
	$(\phi_{R})_*\mu$ must be the announced harmonic measure on the  fa\c cade $\scrX(F)$. 

	Without loss of generality we assume that $(x_n) \subset V_g$ (see Section \ref{sec:8}). Let $\mu$ be any 
	cluster point of the sequence $(\nu_{x_n})$. By selecting a further subsequence we can guarantee that for each 
	$j \in I_0 \setminus J$, a sequence of probability measures $\big(\eta_{x_n}^{I_0 \setminus \{j\}}\big)$ weakly converges, 
	see \eqref{eq:37} for the definition.

	Before we embark on the proof, let us observe the following fact.
	\begin{claim}
		\label{clm:8}
		For each $\omega_1 \in \Omega$, $i \in I_0$ and $n \in \NN_0$, if
		\[
			\sprod{h(o, u_{n+2}; \omega_1)}{\alpha_i} \geqslant \sprod{h(o, u_{n+1}; \omega_1)}{\alpha_i}
		\]
		then
		\[
			\sprod{h(o, u_{n+1}; \omega_1)}{\alpha_i} \geqslant \sprod{h(o, u_n; \omega_1)}{\alpha_i}.
		\]
	\end{claim}
	For the proof, let us denote by $\scrA$ an apartment containing $[u_n, \omega_1]$. Since $u_{n+1} \in [u_n, u_{n+2}]$,
	\[
		\sprod{h(u_n, u_{n+1}; \omega_1)}{\alpha_i} < 0, \quad\text{and}\quad
		\sprod{h(u_{n+1}, u_{n+2}; \omega_1)}{\alpha_i} \geqslant 0,
	\]
	cannot hold true because the $\alpha_i$-wall passing through $\rho^{u_n, \omega_1}_\scrA(u_{n+1})$ cannot have
	$\rho^{u_n, \omega_1}_\scrA(u_{n+2})$ on the same side as $[u_n, \omega_1]$.

	As a consequence of Claim \ref{clm:8}, if $i \in I_0 \setminus J$, then the sequence 
	$(\sprod{h(o, u_{n}; \omega_1)}{\alpha_i} : n \in \NN_0)$ cannot be eventually constant. Furthermore, either it is
	unbounded, or there is $n_0 \geqslant 0$ such that for all $n > n_0$,
	\[
		\sprod{h(o, x_n; \omega_1)}{\alpha_i} < \sprod{h(o, u_{n_0}; \omega_1)}{\alpha_i}
	\]
	and
	\[
		\sprod{h(o, u_{n_0}; \omega_1)}{\alpha_i} = \sup_{n \in \NN} {\sprod{h(o, u_n ; \omega_1)}{\alpha_i}}.
	\]

	\vspace*{2ex}
	\noindent
	{\bf Step 1.}~In this step we show that the support of $\mu$ is contained in $R$. 
	\noindent
	Let $\omega_0 \in \Omega \setminus R$. Then there is $i \in I_0 \setminus J$ such that
	\begin{equation}
		\label{eq:20}
		\sup_{n \in \NN_0}{\sprod{h(o, u_n; \omega_0)}{\alpha_i}} < \infty.
	\end{equation}
	We are going to construct $U \subset \Omega$, an open neighborhood of $\omega_0$, such that $\mu(U) = 0$.

	In view of \eqref{eq:20} and Claim \ref{clm:8}, there are $N \geqslant 1$ and $n_0 \in \NN$ such that
	\[
		\sprod{h(o, u_{n_0+1} ; \omega_0)}{\alpha_i} <
		\sprod{h(o, u_{n_0} ; \omega_0)}{\alpha_i} = 
		\sup_{n \geqslant 1}{\sprod{h(o, u_n; \omega_0)}{\alpha_i}} \leqslant N.
	\]
	Let
	\[
		U = \left\{\omega_1 \in \Omega :
	        \begin{aligned}
				\sprod{h(o, u_{n_0+1}; \omega_1)}{\alpha_i} &< \sprod{h(o, u_{n_0}; \omega_1)}{\alpha_i} \\
				\sprod{h(o, u_{n_0-1}; \omega_1)}{\alpha_i} &\leqslant \sprod{h(o, u_{n_0}; \omega_1)}{\alpha_i} \\
				\sprod{h(o, u_{n_0}; \omega_1)}{\alpha_i} &\leqslant N
				\end{aligned}
			\right\}.
	\]
	By Claim \ref{clm:8}, for all $\omega_1 \in U$, we have
	\[
		\sup_{n \geqslant n_0}{\sprod{h(o, u_n; \omega_1)}{\alpha_i}} 
		=
		\sprod{h(o, u_{n_0}; \omega_1)}{\alpha_i}
		\leqslant N.
	\]
	Since the Busemann function takes values in $\frac{1}{2}P$, the set $U$ is an open neighborhood of $\omega_0$. 
	Furthermore, as a sublevel set of a continuous function it is also closed. Hence,
	\[
		\mu(U) = \lim_{n \to \infty} \nu_{x_n}(U).
	\]
	Next, we are going to compute the limit. Setting $U' = U \cap \Omega'(\omega)$, by co-nullity of big cells 
	(see Theorem \ref{thm:1}) we get
	\[
		\nu_{x_n}(U)
		=
		\nu_{x_n}(U')
		= \int_{(U')^{\{i\}}}
		\nu_{\pi_{R_1}(x_n)}\big(\vphi_{R_1}(R_1 \cap U')\big) \: \eta^{\{i\}}_{x_n}({\rm d} R_1)
	\]
	where in the second equality we have used \eqref{eq:19}. Recall that we know explicitly Radon--Nikodym derivatives
	between harmonic measures (Proposition \ref{prop:2}), that is for each $R_1 \in (U')^{\{i\}}$, we have
	\begin{align*}
		\nu_{\pi_{R_1}(x_n)} \big(\vphi_{R_1}(R_1 \cap U')\big)
		&= 
		\int_{\vphi_{R_1}(R_1 \cap U')}
		\chi^{\{i\}}\big(h_{R_1}(\pi_{R_1}(o), \pi_{R_1}(x_n); \omega_1)\big) 
		\nu_{\pi_{R_1}(o)}({\rm d} \omega_1) \\
		&=
		\int_{\vphi_{R_1}(R_1 \cap U')}
		\chi\Big(P_{\{i\}} h(o, x_n; \vphi_{R_1}^{-1}(\omega_1))\Big)
		\nu_{\pi_{R_1}(o)}({\rm d} \omega_1)
	\end{align*}
	where the last equality is a consequence of Lemma \ref{lem:6}.

	By Claim \ref{clm:8}, for $\omega_1 \in U$ and $n > n_0$,
	\begin{align*}
		\sprod{h(o, x_n; \omega_1)}{\alpha_i}
		&<
		\sprod{h(o, u_{n_0}; \omega_1)}{\alpha_i} \leqslant N,
	\end{align*}
	thus
	\[
		\sup_{\omega_1 \in \vphi_{R_1}(R_1 \cap U) }
		{\sup_{n > n_0}
		{\chi\Big(P_{\{i\}} h(o, x_n; \vphi_{R_1}^{-1}(\omega_1))\Big)}} \leqslant N.
	\]
	Our next aim is to show that for every sequence $(\omega^n : n \in \NN) \subset U'$ tending to $\omega_1 \in U'$, 
	we have
	\begin{equation}
		\label{eq:38}
		\lim_{n \to \infty} \chi\Big(P_{\{i\}} h(o, x_n; \vphi_{R_1}^{-1}(\omega^n))\Big)=0.
	\end{equation}
	Since
	\[
		\lim_{n \to \infty} \sprod{h(o, u_n; \omega_1)}{\alpha_i} = -\infty,
	\]
	by Claim \ref{clm:8}, for each $A > 0$ there is $n' \geqslant 1$ such that for all $n > n'$,
	\[
		\sprod{h(o, u_n; \omega_1)}{\alpha_i} 
		<
		\sprod{h(o, u_{n'}; \omega_1)}{\alpha_i}
		\leqslant -A.
	\]
	By continuity, there is an open set $B \subset \Omega$, $\omega_1 \in B$, such that for all $\omega_2 \in B$,
	\[
		\sprod{h(o, u_{n'}; \omega_2)}{\alpha_i} \leqslant -A.
	\]
	Let
	\[
		V = \big\{\omega_2 \in B \cap U' : 
		\sprod{h(o, u_{n'+1}; \omega_2)}{\alpha_i} < \sprod{h(o, u_{n'}; \omega_2)}{\alpha_i} \big\}.
	\]
	The set $V$ is an open neighborhood of $\omega_1$ such that for all $\omega_2 \in V$, and $n > n'$,
	\[
		\sprod{h(o, x_n; \omega_2)}{\alpha_i} < \sprod{h(o, u_{n'}; \omega_2)}{\alpha_i} \leqslant -A
	\]
	which leads to \eqref{eq:38}. Now, by the Lebesgue's dominated convergence theorem for weakly convergent probability 
	measures \cite[Theorem 3.5]{Serfozo1982}, we obtain
	\[
		\mu(U) 
		= \lim_{n \to \infty}
		\int_{(U')^{\{i\}}}
		\int_{\vphi_{R_1}(R_1 \cap U')}
		\chi\Big(P_{\{i\}} h(o, x_n; \vphi_{R_1}(\omega_1))\Big)
		\nu_{\pi_{R_1}(o)}({\rm d} \omega_1)
		\: \eta^{\{i\}}_{x_n}({\rm d} R_1)
		=0
	\]
	proving that $U \cap \supp \mu = \varnothing$. Consequently, $\supp \mu \subseteq R$.

	\vspace*{2ex}
	\noindent
	{\bf Step 2.}~In this step, we show that any cluster value $\mu$ as in Step 1, which we write $\mu = \lim \nu_{x_n}$ 
	for simplicity, is such that the measure $(\phi_R)_*\mu$ is the harmonic measure of $\scrX(F)$ attached to the vertex
	$x_F$.

	First, let us consider the map
	\[
		\begin{alignedat}{1}
			\pi^J: \Omega &\longrightarrow \Omega^J \\
			\omega &\longmapsto \res_J(\omega).
		\end{alignedat}
	\]
	Since $(\pi^J)_* \nu_x = \eta_x^J$ by Proposition \ref{prop - disintegration}, we get
	\begin{align}
		\nonumber
		\lim_{n \to \infty} \eta_{x_n}^J 
		&=
		\lim_{n \to \infty} (\pi^J)_*\nu_{x_n} \\
		\label{eq:43}
		&=
		(\pi^J)_* \mu = \delta_{R}.
	\end{align}
	Let $x_0 = x_F$. Let $y_0$ be any special vertex of $\scrX(F)$. 
	Let $\omega_R' \in \Omega_R(x_0,y_0)$. There is an apartment containing the cone $u_1 + F$ and the germ $y_0$ 
	\cite[\S 3.1]{Rousseau2023}. Then there exists $\omega'$ such that the sector $[u_1,\omega']$ projects onto
	$[x_0, \omega_R]$. Let us pick $y' \in [u_1,\omega']$ such that $\pi_R(y') = y_0$. By construction
	$\omega' \in \Omega(u_1,y')$; moreover by convexity any $\omega_R'' \in \Omega_R(x_0,y_0)$ will be the image of some 
	$\omega'' \in \Omega(u_1,y')$. 
	
	Now our aim is to compute $((\phi_{R})_*\mu)(\Omega_{R}(x_0,y_0))$. First for $j \in J$, we set
	\[
		V_j = \Big\{\omega_1 \in \Omega:
		\sprod{h(u_1, u_2; \omega_1)}{\alpha_j} \leqslant \sprod{h(u_1, u_1; \omega_1)}{\alpha_j} = 0
		\Big\}.
	\]
	Since each $V_j$ is clopen and contains $R$, the set
	\[
		U = \Omega(u_1, y') \cap \bigcap_{j \in J} V_j
	\]
	is clopen and
	\[
		\phi_{R}(U) = \Omega_{R}(x_0, y_0).
	\]
	Moreover, we have
	\begin{align*}
		((\phi_{R})_* \mu) \big(\Omega_{R}(x_0, y_0)\big)
		&=
		\mu \big(\phi_{R}^{-1} (\Omega_{R}(x_0, y_0)) \big) \\
		&=
		\mu \big(\phi_{R}^{-1} (\Omega_{R}(x_0, y_0)) \cap R \big)
		=
		\mu \big(U \cap R \big)
		=
		\mu (U).
	\end{align*}
	Hence, we can write
	\begin{align*}
		((\phi_{R})_* \mu) \big(\Omega_{R}(x_0, y_0)\big)
		&=
		\lim_{n \to \infty}
		\nu_{x_n}(U).
	\end{align*}
	Now, by \eqref{eq:19}
	\[
		\nu_{x_n}(U) = \int_{U^J} \nu_{\pi_{R_1}(x_n)}\big(\vphi_{R_1}(U \cap R_1)\big) \nu_{x_n}^J({\rm d} R_1).
	\]
	In view of Proposition \ref{prop:2},
	\[
		\nu_{\pi_{R_1}(x_n)}\big(\vphi_{R_1}(U \cap R_1)\big)
		=
		\int_{\vphi_{R_1}(U \cap R_1)} \chi^J\big(h_{R_1}(\pi_{R_1}(u_1), \pi_R(x_n); \omega_1)\big) 
		\nu_{\pi_R(u_1)}({\rm d} \omega_1).
	\]
	To complete the proof, we need the following fact.
	\begin{claim}
		\label{clm:1}
		For all $\lambda \in P^+$,
		\[
			\chi(P_J \lambda) = \prod_{\alpha \in \Phi_J^+} \tau_\alpha^{\sprod{\lambda}{\alpha}}. 
		\]
	\end{claim}
	To see this it is enough to notice that for all $w \in W_J$, and $\alpha \in \Phi^+ \setminus \Phi_J$,
	\[
		w.\alpha \in \Phi^+ \setminus \Phi_J,
		\qquad\text{and}\qquad
		\tau_{w.\alpha} = \tau_{\alpha}.
	\]
	Next, thanks to Claim \ref{clm:1}, for every $\omega_1 \in U$,
	\begin{align*}
		\chi^J\big(h_{R_1}(\pi_{R_1}(u_1), \pi_{R_1}(x_n); \omega_1)\big) 
		&=
		\chi\big(P_J h(u_1, x_n; \omega_1)\big) \\
		&=
		\prod_{\alpha \in \Phi^+_J} \tau_\alpha^{\sprod{h(u_1, x_n; \omega_1)}{\alpha}}
		\leqslant 1,
	\end{align*}
	therefore
	\[
		\nu_{\pi_{R_1}(x_n)}\big(\vphi_{R_1}(U \cap R_1)\big) \leqslant \nu_{\pi_{R_1}(u_1)}\big(\vphi_{R_1}(U \cap R_1)\big),
	\]
	and so
	\[
		\nu_{x_n}(U) \leqslant \int_{U^J}
		\nu_{\pi_{R_1}(u_1)}\big(\vphi_{R_1}(U \cap R_1)\big)
		\eta_{x_n}^J({\rm d} R_1).
	\]
	In view of \eqref{eq:43}, we get
	\begin{align*}
		(\phi_{R})_* \mu \big(\Omega_{R}(x_0, y_0)\big)
		&\leqslant
		\nu_{x_0} \big(\Omega_{R}(x_0, y_0)\big).
	\end{align*}
	Since $y_0$ was arbitrary and both $(\phi_{R})_* \mu$ and $\nu_{x_0}$ are probability measures, we must have
	$(\phi_{R})_* \mu = \nu_{x_0}$.

	To show the uniqueness statement, let us suppose that there are two core sequences $(x_n)$ and $(x_n')$ of type
	$(\omega, J, c)$ and $(\omega', J', c')$, respectively, converging to the same limit, say $\mu$. 
	By \eqref{en:5:1}, we have $\res_J(\omega) = {\rm supp}(\mu) = \res_{J'}(\omega')$, and since types of residues
	are well-defined we deduce that $J' = J$ and $\omega' \sim_J \omega$. Moreover, in view of the description of
	$(\varphi_J)_*\mu$ as a harmonic measure, and since Proposition \ref{prop:2} implies that the vertex defining such a
	measure is well-defined, we have $x_{F_J} = x_{F_{J'}}$ and therefore the parameters $c$ and $c'$
	must be the same. This completes the proof.
\end{proof}

\begin{theorem}
	\label{thm:3}
	Let $\scrX$ be a thick regular locally finite affine building. The closure of the collection of harmonic measures
	on $\scrX$ in the space of probability measures $\calP(\Omega)$ on the maximal boundary $\Omega$ endowed with 
	the weak-$*$ topology is $\Aut(\scrX)$-equivariantly isomorphic to the polyhedral or to the combinatorial compactification
	of $\scrX$. More precisely, the maximal boundary of each affine building at infinity, or stratum, can be seen as a residue
	in $\Omega$ and any cluster value of any unbounded sequence of harmonic measures in $\scrX$ is a harmonic measure on a
	well-defined stratum. 
\end{theorem}
\begin{proof}
	In the three considered compactifications, core sequences converge: for the polyhedral compactification it follows from 
	Lemma \ref{lem:core CV in poly}, for the Caprace--L\'ecureux compactification it follows from Theorem \ref{thm:12}
	and for the Furstenberg compactification it follows from Theorem \ref{thm:5}. As a consequence, combining a standard 
	topological argument (see \emph{e.g.}~the domination criterion given by \cite[Lemma 3.28]{gjt} in both directions between
	two compactifications) and the uniqueness assertions in these results provide the identifications between the three
	compactifications, hence the first part of Theorem \ref{thm:3}. In view of Lemma \ref{lem:1}, we can identify
	the maximal boundary of $\scrX(F)$ with $\res_J(\omega)$, thus the rest of Theorem \ref{thm:3} is a consequence of 
	\eqref{en:5:1} and \eqref{en:5:2} in Theorem \ref{thm:5}.
\end{proof}

\subsection{Furstenberg compactification for Bruhat--Tits buildings}
\label{ss - Furst group}
We provide here the Bruhat-Tits aspect of the previous section. 
The measure-theoretic compactification procedures are among the oldest ones for Riemannian symmetric spaces
\cite{Furstenberg1963}. The idea, due to H.~Furstenberg, is beautiful: it consists in using probability measures on the (maximal)
boundary $\Omega$ by seeing it as a homogeneous space for as many compact subgroups as possible in the ambient Lie group.
In order to be more precise, we need to combine Iwasawa decompositions of ${\bf G}(k)$ and Levi decompositions of parabolic
$k$-subgroups. For any chamber at infinity $\omega \in \Omega$, we saw that have $\Omega \simeq {\bf G}(k)/{\bf P}_\omega(k)$.
We can furthermore choose a maximal $k$-split torus ${\bf S}$ such that the spherical apartment $\mathcal{A}({\bf S})^\infty$ 
contains $\omega$ and we can pick a special vertex $x$ in the affine apartment ${\mathcal A}({\bf S})$. While we have an Iwasawa
decomposition (Section \ref{sec:4}) 
\[
	{\bf G}(k) = K_x \, {\bf S}(k) \, {\bf U}^\omega(k),
\]
we also have a Levi decomposition (Section \ref{sec:14})
\[
	{\bf P}_\omega(k) = 
	\bigl([{\bf Z}_{\bf G}({\bf S}),{\bf Z}_{\bf G}({\bf S})](k) \cdot {\bf S}(k) \bigr) \ltimes  {\bf U}^\omega(k).
\]
We used here the decomposition of the centralizer ${\bf Z}_{\bf G}({\bf S})$ into its part (derived subgroup)
$[{\bf Z}_{\bf G}({\bf S}),{\bf Z}_{\bf G}({\bf S})]$ whose $k$-rational points fix pointwise the apartment
${\mathcal A}({\bf S})$ and the part ${\bf S}$ whose $k$-rational points provide the translations of the affine Weyl group.

Putting together these decompositions and then varying ${\bf S}$ and $x \in {\mathcal A}({\bf S})$, we see that $\Omega$ is 
acted upon continuously and transitively by the stabilizer $K_x$ of any special vertex $x$. It follows then from general
integration theory on homogeneous spaces \cite[VII \S 2 6, Th\'eor\`eme 3]{Bourbaki2004} that there is a unique $K_x$-invariant
probability measure on $\Omega$, which we denote by $\mu_x$; we call $\mu_x$ the \emph{homogeneous measure} associated with 
the special vertex $x$. As a result, if we also use the construction from Proposition \ref{prop:2} we can associate i
to each special vertex $x$ the harmonic measure $\nu_x$ and the homogeneous measure $\mu_x$. Here is the Bruhat--Tits version of
Theorem \ref{th - harm conv} in Introduction. 

\begin{theorem}
\label{th - Comp Furstenberg}
	Let ${\bf G}$ be a simply connected semisimple algebraic group defined over a non-Archimedean local field $k$ and let 
	${\mathcal X}({\bf G},k)$ be the associated Bruhat--Tits building. We denote by $\Omega$ the maximal boundary of
	${\mathcal X}({\bf G},k)$ and by $\calP(\Omega)$ the set of probability measures on it, endowed with the weak-$*$ topology.
	For any special vertex $x \in {\mathcal X}({\bf G},k)$, the harmonic measure $\nu_x$ and the homogeneous measure $\mu_x$
	coincide, therefore the Furstenberg compactification is also the closure of the set of homogeneous measures in 
	$\calP(\Omega)$.
\end{theorem}

\begin{remark}
	The proof below is valid in the more general case when ${\bf G}(k)$ is replaced by a type-preserving and strongly transitive 
	automorphism group $G$ acting on a locally finite affine building $\scrX$. 
\end{remark}

\begin{proof} 
	By the uniqueness of homogeneous measures checked above, it is enough to show that for any $x \in V_s$ the probability
	measure $\nu_x$ is $K_x$-invariant. Let us pick $x \in V_s$ and $k \in K_x$; we thus need to show that $k_*\nu_x=\nu_x$, 
	which we do by checking harmonicity of $k_*\nu_x$ (using the uniqueness given by Proposition \ref{prop:2}).
	Let $y,z \in V_s$ be such that $\sigma(x,y) = \sigma(x,z)$. By the geometric interpretation of the Cartan decomposition 
	(Section \ref{sec:4}), the group $K_x$ acts transitively on the sectors tipped at $x$, and since it preserves types 
	(because ${\bf G}$ is assumed to be simply connected) we have $\sigma(x,k.y)=\sigma(x,k.z)$. By harmonicity of $\nu_x$, this
	implies that $\nu_x\bigl( \Omega(x,k.y) \bigr) = \nu_x\bigl( \Omega(x,k.z) \bigr)$. But $\Omega(x,k.y) = \Omega(k.x,k.y)
	= k .\Omega(x,y)$, and similarly $\Omega(x,k.z) = \Omega(k.x,k.z) = k. \Omega(x,z)$, so the previous equality says that
	for any $y,z \in V_s$ such that $\sigma(x,y) = \sigma(x,z)$, we have $(k^{-1}{}_*\nu_x)\bigl( \Omega(x,y) \bigr) 
	= (k^{-1}{}_*\nu_x)\bigl( \Omega(x,z) \bigr)$; this is the requested harmonicity, hence the first statement. 
\end{proof}
One complementary question is to try to attach a natural measure on $\Omega$ to an arbitrary point of the building.
The problem is the lack of transitivity of the action on $\Omega$ for an arbitrary facet stabilizer. More precisely, when
we described geometrically the Cartan decomposition in Section \ref{sec:4}, we saw that the first step (the one using
foldings given by root group actions, see Figure \ref{fig:10}) could show that if $c$ is an alcove in a given
apartment $\mathcal{A}$, then the Iwahori subgroup ${\rm Stab}_{{\bf G}(k)}(c)$ acts on ${\mathcal X}({\bf G},k)$ with
$\mathcal{A}$ as a fundamental domain; this is the geometric counterpart to the Bruhat--Tits decomposition
${\bf G}(k) = \bigsqcup_{w \in W^a} {\rm Stab}_{{\bf G}(k)}(c) w {\rm Stab}_{{\bf G}(k)}(c)$, 
see \cite [Proposition 4.2.1]{BruhatTits1972}. By approximating geodesic rays by geodesic segments and passing to the limit,
the outcome is that ${\rm Stab}_{{\bf G}(k)}(c)$ acts on $\Omega$ with a fundamental set of representatives given by the
chambers at infinity lying in $\mathcal{A}^\infty$. Keeping the group ${\rm Stab}_{{\bf G}(k)}(c)$, which does not contain any
lift of elements of the vectorial part of $W^a$, we cannot do better than this because the second step is not available.
This shows that the set of ${\rm Stab}_{{\bf G}(k)}(c)$-orbits on $\Omega$ is indexed by the spherical Weyl group of ${\bf G}$
over $k$ (the combinatorial counterpart here is \cite[Th\'eor\`eme 5.1.3 (vi)]{BruhatTits1972}). In particular, we cannot see
$\Omega$ as a homogeneous space for the Iwahori subgroup ${\rm Stab}_{{\bf G}(k)}(c)$ and deduce that it carries a unique
invariant probability measure: there is a simplex of possible choices according to the mass given to each orbit.
Note that the problem still holds for non special vertices, whose stabilizers in $W^a$ do not act transitively on 
$\mathcal{A}^\infty$. This explains, at least in the Bruhat--Tits case, why we only considered the set of special vertices when
defining Furstenberg compactifications. We intend to go back to this problem in a subsequent work. 

\section{Martin compactification}
\label{sec:7}
Our aim is to construct Martin compactifications for the set of special vertices of affine buildings. It relies on 
the asymptotics of the Green's function recently obtained by the second author, see \cite{tr}. It covers a large class of
random walks on good vertices. Consequently, we can achieve our program for all affine buildings with reduced root systems.
For non-reduced case, by changing the origin $o$, we can compactify only half of the special vertices at the same time. To
complete the program in the non-reduced case, we introduce a certain \emph{distinguished} random walk described in Appendix 
\ref{ap:1}.

Let us start by recalling the basics of random walks on discrete structures, and then we immediately specialize the situation 
to affine buildings by describing the asymptotics of ground spherical functions. This enables us to provide the uniqueness 
statements for limits of Martin kernels, which lead to the identifications announced in Theorem \ref{th - Martin} of 
Introduction. The last two parts contain the convergence theorems which define and describe the Martin compactifications 
of affine buildings, at and above the bottom of the spectrum. 

\subsection{Martin embeddings}
\label{ss - Martin embeddings}
We say that a random walk is \emph{isotropic} if the transition probabilities $p(x, y)$ only depend on $\sigma(x,y)$, 
{\it i.e.} are constant on
\[
	\big\{(x',y') \in V_g \times V_g: \sigma(x', y') = \sigma(x, y) \big\}.
\]
We set 
\begin{align*}
	p(0; x, y) &= \delta_x(y), \\
	p(n; x, y) &= \sum_{z \in V_p} p(n-1; x, z) p(z, y), \qquad n \geqslant 1.
\end{align*}
A random walk is \emph{irreducible} if for each $x, y \in V_g$ there is $n \in \NN$ such that $p(n; x, y) > 0$.
It is \emph{aperiodic} if for every $x \in V_g$,
\[
	\gcd\big\{n \in \NN : p(n; x, x) > 0\big\} = 1.
\]
And lastly, a walk has \emph{finite range} if for every $x \in V_g$, we have
\[
	\# \{y \in V_g : p(x, y) > 0 \big\} < \infty.
\]
For each isotropic finite range random walk on good vertices of $\scrX$ with transition density $p$, we define the
corresponding operator acting on $f: V_g \rightarrow \CC$ as
\begin{equation}
	\label{eq:21}
	A f(x) = \sum_{y \in V_g} p(x, y) f(y).
\end{equation}
Let $\mathscr{A}_0 = \cspan\{A_\lambda : \lambda \in P^+\}$ be the commutative $\star$-subalgebra of the
algebra of bounded linear operators on $\ell^2(V_g)$ where $A_\lambda$ are defined in \eqref{eq:66}. The multiplicative
functionals on $\scrA_0$ are given in terms of Macdonald spherical functions. The latter are defined for $\lambda \in P^+$, as
\begin{equation}
	\label{eq:22}
	P_\lambda(z) = \frac{\chi^{-\frac{1}{2}}(\lambda)}{W(q^{-1})} 
	\sum_{w \in W} e^{\sprod{w . z}{\lambda}} \bfc(w . z),
	\qquad z \in \CC^r,
\end{equation}
where
\begin{align}
	\nonumber
	\bfc(z) &= \prod_{\alpha \in \Phi^+} 
	\frac{1-\tau_\alpha^{-1} \tau_{\alpha/2}^{-1/2} e^{-\sprod{z}{\alpha\spcheck}}} 
	{1 - \tau_{\alpha/2}^{-1/2} e^{-\sprod{z}{\alpha\spcheck}}} \\
	\label{eq:10}
	&=
	\prod_{\alpha \in \Phi^{++}}
	\frac{\Big(1-\tau_{2\alpha}^{-1} \tau_\alpha^{-\frac{1}{2}} e^{-\frac{1}{2} \sprod{z}{\alpha\spcheck}}\Big)
	\Big(1 + \tau_\alpha^{-\frac{1}{2}} e^{-\frac{1}{2} \sprod{z}{\alpha\spcheck}}\Big)}
	{1 - e^{-\sprod{z}{\alpha\spcheck}}}.
\end{align}
The values of $P_\lambda$ when the denominator of the $\bfc$-function equals zero can be obtained by taking appropriate limits.
The mapping 
\[
	h_z(A_\lambda) = P_\lambda(z), \qquad z \in \CC^r, \lambda \in P^+
\]
extends to a multiplicative functional, still denoted by $h_z$, on $\scrA_0$. Moreover, all multiplicative functions on $\scrA_0$
are of this form. For more details about spherical harmonic analysis on $\scrX$ we refer the interested reader to \cite{macdo0}
and \cite{park2}.

In fact the formula \eqref{eq:22} defines Macdonald spherical functions for a given root system $\Phi$ and parameters
$(\tau_\alpha : \alpha \in \Phi)$ invariant under the action of the Weyl group $W$. In particular, the definition is valid
without any underlying building, see e.g. \cite{Matsumoto77}.

We henceforth fix an isotropic finite range random walk on good vertices of $\scrX$, with transition function $p$. Since
the walk has finite range, there are a finite set $\calV \subset P$ and positive real numbers $\{c_v:v \in \calV\}$,
such that
\begin{equation}
	\label{eq:68}
    \kappa(z) := \varrho^{-1} h_z(A) = \sum_{v \in \calV} c_v e^{\sprod{z}{v}}, \qquad z \in \CC^r
\end{equation}
where
\begin{equation}
	\label{eq:56}
	\varrho = h_0(A)
\end{equation}
with $A$ given by \eqref{eq:21}. 
Let us observe that $\varrho$ is the spectral radius of $A$. Indeed, by the Gelfand
theorem, $\varrho = \sup_{z \in M_2} |h_z(A)|$ where $M_2$ denotes the spectrum of the commutative $C^\star$-algebra $\scrA_2$, 
and since $A$ is a finite convex combination of $A_\lambda$, we get
\begin{align}
	\nonumber
	h_0(A) \leqslant \sup_{z \in M_2} |h_z(A)| 
	&\leqslant \sum_\lambda a_\lambda \sup_{z \in M_2} |h_z(A_\lambda)| \\
	\label{eq:60}
	&= \sum_\lambda a_\lambda h_0(A_\lambda)
	= h_0(A)
\end{align}
where the penultimate equality follows by \cite[Theorem 6.5]{park2}.

For each $\zeta \geqslant \varrho$, the \emph{Green function} $G_\zeta$ is defined as
\[
	G_\zeta(x, y) = \sum_{n \geqslant 0} \zeta^{-n} p(n; x, y), \qquad x, y \in V_g.
\]
Without loss of generality we assume that the random walk is aperiodic. Indeed, otherwise we consider
\[
	\tilde{p}(x, y) = \tfrac{1}{2} \delta_x(y) + \tfrac{1}{2} p(x, y).
\]
Then
\[
	G_\zeta(x, y) = \frac{\zeta}{\zeta+1} \tilde{G}_{\frac{\zeta+1}{2}}(x, y).
\]
Next, let us observe that for each $y \in V_g$, the function
\[
	V_g \ni x \mapsto G_\zeta(x, y)
\]
is \emph{$\zeta$-harmonic}, that is
\[
	\sum_{v \in V_g} p(x, v) G_\zeta(v, y) = \zeta G_\zeta(x, y).
\]
Recall that a function $f : V_g \rightarrow \RR$, is called \emph{$\zeta$-superharmonic} if $A f \leqslant \zeta f$.
Let us denote by $\calB_\zeta(V_g)$ the set of positive $\zeta$-superharmonic functions
on good vertices of $\scrX$, normalized to take value $1$ at the origin $o$. The set $\calB_\zeta(V_g)$ endowed with
the topology of pointwise convergence is a compact second countable Hausdorff space, thus it is metrizable. For an
automorphism $g \in \Aut(\scrX)$ and a function $f \in \calB_\zeta(V_g)$ we set
\begin{equation}
	\label{eq - proj action}
	(g . f)(x) = \frac{f(g^{-1} . x)}{f(g^{-1} . o)}
\end{equation}
provided that $f(g^{-1} . o) \neq 0$. Let us define the map
\begin{alignat*}{1}
	\iota: V_g &\longrightarrow \calB_\zeta(V_g) \\
	y &\longmapsto K_\zeta(\,\cdot\,, y) 
\end{alignat*}
where for $x, y \in V_g$ we have set
\[
	K_\zeta(x, y) = \frac{G_\zeta(x, y)}{G_\zeta(o, y)}.
\]
Since the random walk is transient, the map $\iota$ is injective. Moreover, for $g \in \Aut(\scrX)$, we have
\begin{align*}
	g . \iota(y) 
	= g . K_\zeta(\cdot,y) 
	= \frac{K_\zeta(g^{-1}\cdot,y)}{K_\zeta(g^{-1} . o, y)} 
	= \frac{K_\zeta(\cdot,g . y)}{K_\zeta(o, g . y)} &= K_\zeta(\cdot,g . y) = \iota(gy)
	\end{align*}
thus $\iota$ is equivariant. Notice that $\iota(V_g)$ is also discrete, because for $y, y' \in V_g$ with $y \neq y'$, we have 
\begin{align*}
	K_\zeta(y, y) - K_\zeta(y, y') 
	&= \frac{G_\zeta(y, y) G_\zeta(o, y') - G_\zeta(o, y) G_\zeta(y, y')}{G_\zeta(o, y) G_\zeta(o, y')} \\
	&= \frac{(G_\zeta(o, o) - 1) G_\zeta(o, y') + G_\zeta(o, y') 
	- G_\zeta(o, y) G_\zeta(y, y')}{G_\zeta(o, y) G_\zeta(o, y')}\\
	&\geqslant \frac{G_\zeta(o, o)-1}{G_\zeta(o, y)}.
\end{align*}
Let $\clr{\scrX}_{M, \zeta}$ be the closure of $\iota(V_g)$ in $\calB_{\zeta}(V_g)$. The space $\calB_\zeta(V_g)$
is metrizable, thus by Lemma \ref{lem:2}, while studying $\clr{\scrX}_{M, \zeta}$ we restrict attention to core
sequences. By \cite[Proposition 6.4]{gjt}, the group $\Aut(\scrX)$ acts continuously on $\clr{\scrX}_{M, \zeta}$.

\subsection{Asymptotic behavior of ground state spherical functions}
Before embarking on the computing the Martin kernels, we need the following auxiliary result. Given a subset $J \subseteq I_0$, 
let us define
\[
	\Xi_J(\mu) = 
	\lim_{\theta \to 0} 
	\frac{1}{|W_J|}
	\sum_{w \in W_J}
    e^{\sprod{w . \theta}{\mu}} \bfc_J(w . \theta), \qquad \mu \in P 
\]
where $\bfc_J$ denotes $\bfc$-function for the root system $\Phi_J$, that is
\[
	\bfc_J(z) = \prod_{\alpha \in \Phi_J^+} 
	\frac{1-\tau_\alpha^{-1} \tau_{\alpha/2}^{-1/2} e^{-\sprod{z}{\alpha\spcheck}}} 
	{1 - \tau_{\alpha/2}^{-1/2} e^{-\sprod{z}{\alpha\spcheck}}}.
\]
Let $\bfb_J(z) = e^{\sprod{z}{\rho_J}} \bfc_J(z) \Delta_J(z)$ where
$\rho_J = \frac{1}{2} \sum_{\alpha \in \Phi^{++}_J} \alpha\spcheck$, and
\[
	\Delta_J(z) = \prod_{\alpha \in \Phi^{++}_J} \Big(e^{\frac{1}{2}\sprod{z}{\alpha\spcheck}} - 
	e^{-\frac{1}{2} \sprod{z}{\alpha\spcheck}} \Big).
\]
If $J = I_0$ we drop the index $J$ from the notation. By \eqref{eq:22} it stems
\begin{equation}
	\label{eq:41}
	\Xi(\mu) = \frac{W(q^{-1})}{|W|} \chi^{\frac{1}{2}}(\mu) P_\mu(0).
\end{equation}
We start by the following lemma.
\begin{lemma}
	\label{lem:7}
	For each $\lambda \in P$,
	\[
		\Xi(\lambda) = 
		\frac{1}{a}
		\bigg\{
		\prod_{\alpha \in \Phi^{++}}
		\sprod{\nabla}{\alpha\spcheck}
		\bigg\}
		\bigg\{
		e^{\sprod{\theta}{\lambda+\rho}} \bfb(\theta)
		\bigg\}_{\theta = 0}
	\]
	where
	\[
		a = \bigg\{\prod_{\alpha \in \Phi^{++}} \sprod{\nabla}{\alpha\spcheck}\bigg\}
		\big\{\Delta(\theta)\big\}_{\theta = 0}.
	\]
\end{lemma}
\begin{proof}
	First, let us write
	\[
		\sum_{w \in W} e^{\sprod{w .  \theta}{\lambda}} \bfc(w . \theta) 
		=
		\sum_{w \in W} e^{\sprod{w . \theta}{\lambda+\rho}} 
		\bfb(w . \theta) \frac{1}{\Delta(w . \theta)}.
	\]
	Since $\Delta(\theta)$ is a $W$-anti-invariant exponential polynomial, we have
	\[
		\sum_{w \in W} e^{\sprod{w . \theta}{\lambda}} \bfc(w . \theta)
		=
		\frac{1}{\Delta(\theta)} 
		\sum_{w \in W}
		(-1)^{\ell(w)}
		e^{\sprod{w . \theta}{\lambda+\rho}} 
		\bfb(w . \theta).
	\]
	Now, by multiple applications of L'H\^opital's rule we get (see e.g. \cite{a1})
	\begin{align*}
		\lim_{\theta \to 0}
		\frac{a}{\abs{W}}
		\sum_{w \in W} e^{\sprod{w . \theta}{\lambda}} \bfc(w . \theta) 
		&=\bigg\{
		\prod_{\alpha \in \Phi^{++}}
		\sprod{\nabla}{\alpha\spcheck}
		\bigg\}
		\bigg\{
		\frac{1}{|W|}\sum_{w \in W} (-1)^{\ell(w)} 
		e^{\sprod{w. \theta}{\lambda+\rho} \bfb(w. \theta)}
		\bigg\}_{\theta = 0}
		\\
		&=
		\bigg\{
		\prod_{\alpha \in \Phi^{++}}
		\sprod{\nabla}{\alpha\spcheck}
		\bigg\}
		\bigg\{
		e^{\sprod{\theta}{\lambda+\rho}} \bfb(\theta)
		\bigg\}_{\theta = 0}
	\end{align*}
	where the last equality follows by $W$-anti-invariance of the differential operator. This completes the proof
	of the lemma.
\end{proof}

We are now ready to prove the asymptotic formula for the auxiliary functions $\Xi_J$, which will be used in the next section 
to describe the asymptotics of the ground spherical functions. The following proposition is motivated by 
\cite[Remark 2.2.13]{aj}.
\begin{proposition}
	\label{prop:1}
	Let $J \subsetneq I_0$. Suppose that $(\gamma_n : n \in \NN)$ is a sequence of dominant co-weights such that
	\begin{align}
		\label{eq:2a}
		\sup_{n \in \NN}
		\sprod{\gamma_n}{\alpha} &< \infty, \quad\text{for all } \alpha \in \Phi_J^+,\\
	\intertext{and}
		\label{eq:2b}
		\lim_{n \to \infty}
		\sprod{\gamma_n}{\alpha} &= +\infty, \quad\text{for all } \alpha \in \Phi^+ \setminus \Phi^+_J.
	\end{align}
	Then there is a positive constant $a_J$, such that
	\begin{equation}
		\label{eq:1}
		\Xi(\gamma_n) = \bigg(\prod_{\alpha \in \Phi^{++} \setminus \Phi_J} \sprod{\gamma_n}{\alpha\spcheck}
		\bigg)
		\Xi_{J}(\gamma_n) \big(a_J + o(1)\big).
	\end{equation}
\end{proposition}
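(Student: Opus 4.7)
The strategy is to exploit a factorization of the $\bfc$-function adapted to the subsystem $\Phi_J$, combined with a coset decomposition of the Weyl group, and then extract the polynomial asymptotic from the resulting sum.

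First I would write $\bfc(z) = \bfc_J(z) \bfc^J(z)$, where $\bfc^J$ collects the factors in \eqref{eq:10} indexed by $\alpha \in \Phi^{++} \setminus \Phi_J$. Since each simple reflection $r_j$ with $j \in J$ preserves the set $\Phi^+ \setminus \Phi_J^+$ (it only alters the coefficient of $\alpha_j$ while leaving the $\alpha_i$-coefficients for $i \neq j$ unchanged) and since the parameters $\tau_\alpha$ are $W$-invariant, the function $\bfc^J$ is invariant under the $W_J$-action on its argument: $\bfc^J(uz) = \bfc^J(z)$ for all $u \in W_J$. Choosing $W^J$ a system of minimal-length representatives of $W_J \backslash W$ and writing $W = \bigsqcup_{v \in W^J} W_J v$, this yields
\[
\frac{1}{|W|} \sum_{w \in W} e^{\sprod{w\theta}{\gamma}} \bfc(w\theta) = \frac{1}{|W|} \sum_{v \in W^J} \bfc^J(v\theta) \sum_{u \in W_J} e^{\sprod{u(v\theta)}{\gamma}} \bfc_J(u(v\theta)).
\]
Setting $\theta' = v\theta$ and recognizing the inner sum as the defining sum for $|W_J|\calP_J(\gamma)$ evaluated at $\theta'$, its limit as $\theta \to 0$ is $|W_J|\calP_J(\gamma)$.

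Second I would analyze the outer sum. The factor $\bfc^J(v\theta)$ is singular at $\theta = 0$ with leading behavior $N^J(0)/\prod_{\alpha \in \Phi^{++}\setminus\Phi_J}\sprod{v\theta}{\alpha\spcheck}$, where $N^J(0)$ is the value at zero of the regular numerator of $\bfc^J$. Individual terms diverge but their sum over $v \in W^J$ converges. Using the identity $\bfc(z) = \bfb(z) e^{-\sprod{z}{\rho}}/\Delta(z)$ together with the factorization $\Delta = \Delta_J \cdot \Delta^J$, one rewrites
\[
\sum_{w \in W} e^{\sprod{w\theta}{\gamma}} \bfc(w\theta) = \frac{1}{\Delta(\theta)} \sum_{w \in W} \det(w)\, e^{\sprod{w\theta}{\gamma-\rho}}\, \bfb(w\theta),
\]
the numerator being $W$-antisymmetric in $\theta$, hence divisible by $\Delta(\theta)$. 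Under the hypothesis \eqref{eq:2a}--\eqref{eq:2b}, the dominant asymptotic in $\gamma_n$ comes from Taylor-expanding $e^{\sprod{w\theta}{\gamma_n}}$ to order $|\Phi^{++}\setminus\Phi_J|$ in the directions transverse to $\mathfrak{a}_J$: this produces exactly the factor $\prod_{\alpha \in \Phi^{++}\setminus\Phi_J}\sprod{\gamma_n}{\alpha\spcheck}$, while the bounded components $\sprod{\gamma_n}{\alpha}$ for $\alpha \in \Phi_J^+$ reconstitute $\calP_J(\gamma_n)$ inside the $W_J$-alternating part of the sum.

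The constant $a_J > 0$ is then identified as the resulting combinatorial factor, depending on $N^J(0)$, the regular parts of $\bfb/\bfb_J$ at $0$, and the ratio $|W_J|/|W|$. The main obstacle is this extraction step: whereas the $W_J$-sum cleanly yields $\calP_J$ by definition, the $W^J$-sum does not carry a group action, so one must prove directly a Weyl-denominator-type identity for the (non-root-system) subset $\Phi^{++} \setminus \Phi_J$. I would do this by organizing the outer antisymmetric sum according to the length of coset representatives in $W^J$ and applying the Weyl character/denominator formula inductively on length, with extra bookkeeping in the non-reduced case to track the distinction between $\Phi^+$ and $\Phi^{++}$.
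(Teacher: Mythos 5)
Your second paragraph is, in substance, the route the paper takes: pass to the $W$-antisymmetrized numerator via the factorization of $\bfc$ through $\bfb$ and $\Delta$, and extract both the limit at $\theta=0$ and the asymptotics in $\gamma_n$ from it. But the step you yourself flag as ``the main obstacle'' is exactly where your plan stops being a proof, and the fix you propose is not the one that works. No Weyl-denominator-type identity for the set $\Phi^{++}\setminus\Phi_J$ (which is not a root subsystem) is needed, and it is unclear what such an identity would say or how an induction over lengths of coset representatives in $W^J$ would produce it --- as you concede, no group acts on that sum, and the coset decomposition of your first paragraph cannot be used term by term precisely because each $\bfc^J(v\theta)$ blows up at $\theta=0$. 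What actually makes the extraction work is: (i) the several-variable l'Hospital identity (the paper quotes it from Anker's work) expressing $\lim_{\theta\to 0}$ of the antisymmetrized sum divided by $\Delta(\theta)$ as a constant times $\prod_{\alpha\in\Phi^{++}}\sprod{\nabla}{\alpha\spcheck}$ applied to $e^{\sprod{\theta}{\gamma+\rho}}\bfb(\theta)$ at $\theta=0$; (ii) the splitting $\gamma+\rho=P_J(\gamma+\rho)+Q_J(\gamma+\rho)$ and the Leibniz expansion, in which every term where a derivative along some $\alpha\in\Phi^{++}_J$ hits $e^{\sprod{\theta}{Q_J(\gamma+\rho)}}$ vanishes identically because $Q_J(\gamma+\rho)$ is orthogonal to $\mathfrak{a}_J$; (iii) the re-identification of the surviving factor $\{\prod_{\alpha\in\Phi^{++}_J}\sprod{\nabla}{\alpha\spcheck}\}\{e^{\sprod{\theta}{P_J(\gamma_n+\rho)}}\bfb(\theta)\}_{\theta=0}$ with a constant multiple of $\calP_J(\gamma_n)$, obtained by running the identity (i) backwards inside the subsystem $\Phi_J$ and pulling the $W_J$-invariant ratio $\bfb/\bfb_J$ out of the $W_J$-sum (this is where the invariance you verify for $\bfc^J$ really gets used, in the form of $W_J$-invariance of $\bfb/\bfb_J$). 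None of (i)--(iii) appears in your sketch, so both the factor $\prod_{\alpha\in\Phi^{++}\setminus\Phi_J}\sprod{\gamma_n}{\alpha\spcheck}$ and the factor $\calP_J(\gamma_n)$ are asserted rather than produced.

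A second omission: you never compare the sizes of the competing terms in the expansion, nor bound the main one from below, which is what converts the expansion into the multiplicative statement with $a_J+o(1)$. Under \eqref{eq:2a} this can be done because $P_J(\gamma_n)$ ranges over a finite set, so $\calP_J(\gamma_n)$ stays bounded away from $0$ while every discarded term is missing at least one factor $\sprod{\gamma_n}{\alpha\spcheck}$, $\alpha\in\Phi^{++}\setminus\Phi_J$, which tends to infinity by \eqref{eq:2b}; the paper instead organizes this bookkeeping as a two-step argument, first proving the statement for the larger set $J_1$ of simple roots along which the angular part of $\gamma_n$ degenerates, and then inducting on the rank inside $\Phi_{J_1}$. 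One of these dominance arguments has to be carried out explicitly; your proposal contains neither, and the concluding plan (``Weyl character/denominator formula inductively on length, with extra bookkeeping in the non-reduced case'') does not supply it.
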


\begin{proof}
	The proof is in two steps. 

	\vspace*{2ex}
	\noindent
	{\bf Step 1.}
	Let us first prove \eqref{eq:1} assuming that the sequence $(\gamma_n : n \in \NN)$ satisfies 
	$\lim_{n \to \infty} \norm{\gamma_n} = +\infty$, and
	\begin{equation}
		\label{eq:3}
		\liminf_{n \to \infty} \left\langle \frac{\gamma_n}{\norm{\gamma_n}}, \alpha_j \right\rangle =0,
		\qquad\text{if and only if } j \in J,
	\end{equation}
	instead of \eqref{eq:2a} and \eqref{eq:2b}. Let $P_J$ and $Q_J$ be the projections defined by \eqref{eq:77}. 
	We set
	\begin{equation}
		\label{eq:49}
		\tilde{a}_J = \bigg\{\prod_{\alpha \in \Phi^{++}_J} \sprod{\nabla}{\alpha\spcheck}\bigg\}
		\big\{\Delta_J(\theta)\big\}_{\theta = 0}.
	\end{equation}
	Since for each $\lambda \in P$,
	\[
		\sprod{\theta}{\lambda + \rho} = 
		\sprod{\theta}{P_J(\lambda+\rho)} + \sprod{\theta}{Q_J(\lambda+\rho)}
	\]
	by Lemma \ref{lem:7}, $\Xi(\lambda)$ is equal to the sum over $A \subseteq \Phi^{++}$ of terms
	\begin{equation}
		\label{eq:17}
		\frac{1}{\tilde{a}_{I_0}}
		\prod_{\alpha \in A} \sprod{Q_J(\lambda+\rho)}{\alpha\spcheck}
		\bigg\{
		\prod_{\alpha \in \Phi^{++} \setminus A} \sprod{\nabla}{\alpha\spcheck}
		\bigg\}
		\bigg\{
		e^{\sprod{\theta}{P_J(\lambda+\rho)}} \bfb(\theta)
		\bigg\}_{\theta = 0}.
	\end{equation}
	Observe that \eqref{eq:17} equals zero if $A \cap \Phi^{++}_J \neq \varnothing$.
	Moreover, if $A \subseteq \Phi^{++}\setminus \Phi^{++}_J$, then the term \eqref{eq:17} is
	$\calO\big(\norm{\lambda}^{\abs{A}}\big)$.
	Hence, taking $\lambda = \gamma_n$, we get
	\[
		\lim_{\theta \to 0}
		\Xi(\gamma_n)
		=
		\frac{1}{\tilde{a}_{I_0}}
		\bigg(
		\prod_{\alpha \in \Phi^{++}\setminus\Phi_J} \sprod{\gamma_n}{\alpha\spcheck}
		\bigg)
		\bigg\{
		\prod_{\alpha \in \Phi^{++}_J} \sprod{\nabla}{\alpha\spcheck}
		\bigg\}
		\left\{
		e^{\sprod{\theta}{P_J(\gamma_n+\rho)}} \bfb(\theta)
		\right\}_{\theta = 0}
		\big(1 + o(1)\big).
	\]
	Since $\bfb / \bfb_J$ is $W_J$-invariant, for each $\lambda \in P$, we have
	\begin{align*}
		\frac{\bfb(0)}{\bfb_J(0)}
		\Xi_J(\lambda)
		&=
		\lim_{\theta \to 0}
		\frac{1}{|W_J|} \sum_{w \in W_J}
		e^{\sprod{w . \theta}{P_J(\lambda)}} \bfc_J(w . \theta) \frac{\bfb(w . \theta)}{\bfb_J(w . \theta)} \\
		&=
		\lim_{\theta \to 0}
		\frac{1}{|W_J|} \sum_{w \in W_J}
		e^{\sprod{w . \theta}{P_J(\lambda+\rho)}} \bfb(w . \theta) 
		\frac{1}{\Delta_J(w . \theta)} \\
		&=
		\lim_{\theta \to 0}
		\frac{1}{\Delta_J(\theta)}
		\frac{1}{|W_J|} \sum_{w \in W_J}
		(-1)^{\ell(w)}
		e^{\sprod{w . \theta}{P_J(\lambda+\rho)}} \bfb(w . \theta) \\
		&=
		\frac{1}{\tilde{a}_J}
		\bigg\{
		\prod_{\alpha \in \Phi^{++}_J} \sprod{\nabla}{\alpha\spcheck}
		\bigg\}
		\left\{
		e^{\sprod{\theta}{P_J(\lambda+\rho)}} \bfb(\theta)
		\right\}_{\theta = 0}
	\end{align*}
	where the last equality follows by multiple applications of L'H\^opital's rule. Hence, we obtain
	\[
		\lim_{\theta \to 0}
		\Xi(\gamma_n)
		=
		\frac{\tilde{a}_J}{\tilde{a}_{I_0}}
		\cdot
		\frac{\bfb(0)}{\bfb_J(0)}
		\bigg(
		\prod_{\alpha \in \Phi^{++}\setminus\Phi_J} \sprod{\gamma_n}{\alpha\spcheck}
		\bigg)
		\Xi_J(\gamma_n) \big(1 + o(1)\big)
	\]
	which completes the proof of the first step.

	\vspace*{2ex}
	\noindent
	{\bf Step 2.}
	Let $(\gamma_n : n \in \NN)$ be a sequence satisfying \eqref{eq:2a} and \eqref{eq:2b}. The proof is by
	induction over the rank. If $r = 2$, the conclusion easily follows by Step 1. Suppose that Proposition \ref{prop:1} 
	holds true for all root systems of rank smaller than $r$. We define
	\[
		J_1 = \left\{j \in I_0 : \liminf_{n \to \infty} 
		\left\langle \frac{\gamma_n}{\norm{\gamma_n}}, \alpha_j \right\rangle 
		= 0 \right\}.
	\]
	Notice that $J \subseteq J_1$. In view of Step 1, we have
	\begin{equation}
		\label{eq:4}
		\Xi(\gamma_n) =
		\frac{\tilde{a}_{J_1}}{\tilde{a}_{I_0}} \cdot \frac{\bfb(0)}{\bfb_{J_1}(0)}
		\bigg(
		\prod_{\alpha \in \Phi^{++} \setminus \Phi_{J_1}} \sprod{\gamma_n}{\alpha\spcheck}
		\bigg)
        \Xi_{J_1}(\gamma_n) \big(1 + o(1)\big).
	\end{equation}
	This completes the proof in the case $J = J_1$. If $J \subsetneq J_1$, we consider $J_1$ in
	place of $I_0$. By the inductive hypothesis we have
	\[
		\Xi_{J_1}(\gamma_n) =
		\frac{\tilde{a}_{J_1}}{\tilde{a}_{J_1}} \cdot \frac{\bfb(0)}{\bfb_{J}(0)}
		\bigg(
		\prod_{\alpha \in \Phi^{++}_{J_1} \setminus \Phi_J}
		\sprod{\gamma_n}{\alpha\spcheck}
		\bigg)
       	\Xi_{J}(\gamma_n) \frac{\tilde{a}_J \bfb_{J_1}(0)}{\tilde{a}_{J_1} \bfb_J(0)} \big(1 + o(1)\big),
	\]
	which together with \eqref{eq:4} finishes the proof.
\end{proof}

\subsection{Uniqueness of limit functions}
\label{sec:15}
We introduce here partial ground state spherical functions attached to subsets of simple roots. 
These functions will appear naturally when constructing the Martin boundary. This fits well with the fact that fa\c cades
at infinity (as defined in Section \ref{sec:3.3}) can be used to describe Martin compactifications. 
The partial ground spherical functions are combined with partial horospherical functions: the presence of the latter factors
is explained by the fact that the corresponding affine buildings of smaller rank lie at infinity. The difference between the
analytic behaviors of the factors enables us to provide a precise parametrization of the functions in the Martin boundaries. 

Let us define the $J$-\emph{ground state} spherical function as
\[
	\bfPhi_J(\lambda) = \frac{|W_J|}{W_J(q^{-1})} \chi_J^{-\frac{1}{2}} \big(\lambda \big) \Xi_J(\lambda),
	\qquad \lambda \in P^+.
\]
\begin{theorem}
	\label{thm:6}
	Let $J, J' \subsetneq I_0$, $\omega, \omega' \in \Omega$, $y \in [o, \omega]$, $y' \in [o, \omega']$.
		If for all $x \in V_g$, we have
	\begin{equation}
		\label{eq:54}
		\frac{\bfPhi_{J}(\sigma(x, y))}{\bfPhi_{J}(\sigma(o, y))}
		\chi^{\frac{1}{2}}\big(Q_J h(o, x; \omega)\big)
		=
		\frac{\bfPhi_{J'}(\sigma(x, y'))}
		{\bfPhi_{J'}(\sigma(o, y'))}
		\chi^{\frac{1}{2}}\big(Q_{J'} h(o, x; \omega')\big)
	\end{equation}
	then $J' = J$, $\omega' \sim_J \omega$, and $\pi_{F}(y) = \pi_{F}(y')$ where $\pi_F : \scrX \to \scrX(F)$ is the projection 
	to the fa\c cade at infinity $\scrX(F)$, and $F$ is the spherical facet at infinity 
	corresponding to the residue $\res_J(\omega)$. 
\end{theorem}
Note that the ratio of values of partial spherical ground functions above corresponds to the fact that, in the group case,
the function space used to define Martin compactifications is acted upon by a projective action (see \cite[p. 101, paragraph
before Proposition 6.4]{gjt}). The corresponding uniqueness statement in the case of symmetric spaces is
\cite[Theorem 7.22]{gjt}: this result is more analytic and group-theoretic in nature since it establishes the uniqueness of
eigenfunctions of Laplace operators satisfying some invariance under the action of a well-chosen unipotent subgroup.
\begin{proof}
	In view of the cocycle relation, for each $x, x' \in V_s$, we have
	\begin{align*}
		&
		\frac{\bfPhi_{J}(\sigma(x', y))}{\bfPhi_{J}(\sigma(o, y))}
		\chi^{\frac{1}{2}}\big(Q_J h(o, x'; \omega)\big) \\
		&\qquad=
		\frac{\bfPhi_{J}(\sigma(x', y))}{\bfPhi_{J}(\sigma(x, y))}
		\chi^{\frac{1}{2}}\big(Q_J h(x, x'; \omega)\big)
		\frac{\bfPhi_{J}(\sigma(x, y))}{\bfPhi_{J}(\sigma(o, y))}
		\chi^{\frac{1}{2}}\big(Q_J h(o, x; \omega)\big).
	\end{align*}
	Consequently, \eqref{eq:54} implies that for all $x, x' \in V_s$,
	\begin{equation}
		\label{eq:54'}
		\frac{\bfPhi_{J}(\sigma(x', y))}{\bfPhi_{J}(\sigma(x, y))}
		\chi^{\frac{1}{2}}\big(Q_J h(x, x'; \omega)\big)
		=
		\frac{\bfPhi_{J'}(\sigma(x', y'))}{\bfPhi_{J'}(\sigma(x, y'))}
		\chi^{\frac{1}{2}}\big(Q_{J'} h(x, x'; \omega')\big).
	\end{equation}
	Let $\scrA$ be an apartment containing $\omega$ and $\omega'$ in its boundary. Let $\tilde{\omega}$ be opposite to 
	$\omega$ such that $[\omega, \tilde{\omega}] = \scrA$. We select $x \in V_s(\scrA) \cap [y', \tilde{\omega}] \cap 
	[y, \tilde{\omega}]$. For $\lambda \in P^+$ and $n \in \NN$, by $x_{\lambda, n}$ we denote the unique point in
	$[x, \tilde{\omega}]$ such that $\sigma(x_{\lambda; n}, x) = n \lambda$. Then
	\begin{align*}
		\sigma(x_{\lambda; n}, y)
		&=
		\sigma(x_{\lambda; n}, x) + \sigma(x, y) \\
		&
		=
		n\lambda + \sigma(x, y),
	\end{align*}
	and
	\[
		\sigma(x_{\lambda; n}, y') = n \lambda + \sigma(x, y').
	\]
	Next, there is $w \in W$ such that
	\[
		h(x, x_{\lambda; n}; \omega) = - n \lambda,
		\quad\text{and}\quad
		h(x, x_{\lambda; n}; \omega') = -n w.\lambda.
	\]
	Therefore taking $x = x_{\lambda; n}$ in \eqref{eq:54'}, we obtain
	\begin{equation}
		\label{eq:45}
		\frac{\bfPhi_J(\sigma(x, y)+n\lambda)}{\bfPhi_J(\sigma(x, y))}
		\chi^{\frac{1}{2}}\big(n Q_J \lambda\big)
		=
		\frac{\bfPhi_{J'}(\sigma(x, y')+n\lambda)}{\bfPhi_{J'}(\sigma(x, y'))}
		\chi^{\frac{1}{2}}\big(n Q_{J'} w. \lambda\big).
	\end{equation}
	Suppose, contrarily to our claim, that there is $j \in J \setminus J'$. Select
	$\lambda = \lambda_j$. Then the left hand-side of \eqref{eq:45} takes the form
	\[
		\frac{\Xi_J(\sigma(x, y) + n \lambda_j)}{\Xi_J(\sigma(x, y))}
		\chi^{\frac{1}{2}} \big(-n \lambda_j\big)
	\]
	which has a factor that is a non-trivial polynomial in $n$, while the right hand-side equals
	\[
		\chi^{\frac{1}{2}}\big(-n Q_{J'} w. \lambda_j \big).
	\]
	Therefore, \eqref{eq:45} cannot be satisfied for $n$ sufficiently large. Hence $J = J'$ and \eqref{eq:45} takes
	the form
	\begin{equation}
		\label{eq:46}
		\frac{\Xi_J(\sigma(x, y) + n \lambda)}{\Xi_J(\sigma(x, y))}
		\cdot
		\frac{\Xi_J(\sigma(x, y'))}{\Xi_J(\sigma(x, y')+n\lambda)}
		=
		\chi^{\frac{1}{2}}\big(n Q_J(\lambda - w^{-1}. \lambda)\big).
	\end{equation}
	However, \eqref{eq:46} cannot be satisfied unless both sides are constant sequences for $n$ sufficiently large. 
	Checking the right hand-side of \eqref{eq:46} we conclude that it has to be constant equal to $1$ for $n \geqslant 1$.
	Since for each $\lambda \in P^+$,
	\[
		\chi(Q_J \lambda) = e^{\sprod{\eta}{Q_J \lambda}},
	\]
	we conclude that 
	\begin{equation}
		\label{eq:47}
		\sprod{\eta}{Q_J \lambda} = \sprod{\eta}{Q_J w^{-1}. \lambda}.
	\end{equation}
	First, let us show that \eqref{eq:47} entails that $w \in W_J$. Let $k = \ell(w)$ and suppose that 
	$w = w_k= w_{k-1} r_{\beta_k}$ is such that $\ell(w_k) > \ell(w_{k-1})$. We have
	\begin{align*}
		\sprod{\eta}{Q_J w_k^{-1} . \lambda} = \sprod{Q_J \eta}{w_k^{-1}. \lambda} 
		&=
		\sprod{Q_J \eta}{r_{\beta_k} w_{k-1}^{-1}. \lambda} \\
		&=
		\sprod{r_{\beta_k} Q_J \eta}{w_{k-1}^{-1}. \lambda} \\
		&=
		\sprod{Q_J \eta}{w_{k-1}^{-1}. \lambda} - \sprod{Q_J \eta}{\beta_k\spcheck} \sprod{\beta_k}{w_{k-1}^{-1}. \lambda}.
	\end{align*}
	Using induction on $k$, we get
	\begin{equation}
		\label{eq:59}
		\sprod{\eta}{Q_J w_k^{-1} . \lambda}
		=
		\sprod{\eta}{Q_J \lambda}
		-
		\sum_{i = 1}^k  \sprod{Q_J \eta}{\beta_i \spcheck} \sprod{\beta_i}{w_{i-1}^{-1}. \lambda}.
	\end{equation}
	Therefore,
	\begin{equation}
		\label{eq:14}
		\sum_{i = 1}^k  \sprod{Q_J \eta}{\beta_i \spcheck} \sprod{\beta_i} {w_{i-1}^{-1}. \lambda} = 0.
	\end{equation}
	For each $i \in \{1, 2, \ldots, k\}$, $w_i = w_{i-1} r_{\beta_i}$ and $\ell(w_i) > \ell(w_{i-1})$, hence
	by \cite[Section 5.7, Proposition]{Humphreys1990} we have
	\begin{equation}
		\label{eq:31}
		w_{i-1} . \beta_i \in \Phi^+.
	\end{equation}
	Moreover, in view of \eqref{eq:32}, $Q_J \eta \in S_0$, and $\sprod{Q_J \eta}{\alpha_j} = 0$, for all 
	$j \in J$. Hence, \eqref{eq:14} implies that $\beta_i \in \Phi_J^+$, for all $i \in \{1, \ldots, k\}$. Consequently, 
	$w \in W_J$ and so $\omega' \sim_J \omega$.

	To complete the proof, we need to show that $\pi_F(y) = \pi_F(y')$. Since $\omega$ and $\omega'$ are not opposite
	there is an apartment $\scrA$ containing both $\omega$ and $\omega'$ on its boundary, such that $y \in V_s(\scrA)$.
	Select $\omega'' \in \Omega$ such that $[y, \omega''] \subset \scrA$ and $y \in [y', \omega'']$. For $\lambda \in P^+$ 
	and $n \in \NN$, we take $x_{\lambda; n} \in [y, \omega'']$ such that
	\[
		\sigma(x_{\lambda; n}, y) = n\lambda.
	\]
	Then
	\[
		\sigma(x_{\lambda; n}, y') = n \lambda + \sigma(y, y').
	\]
	By taking $x = y$ and $x' = x_{\lambda; n}$ in \eqref{eq:46}, we obtain
	\begin{equation}
		\label{eq:44}
		\Xi_J(n \lambda) = 
		\frac{\Xi_{J}(\sigma(y, y') + n \lambda)}{\Xi_{J}(\sigma(y, y'))},
	\end{equation}
	for all $n \in \NN$ and $\lambda \in P^+$. Now, by Lemma \ref{lem:7}, we get
	\begin{align*}
		\tilde{a}_J \Xi_J(\sigma(y, y') + n \lambda)
		&=
		\bfb_J(0) 
		\prod_{\alpha \in \Phi_J^{++}} \sprod{\sigma(y, y') + \rho_J + n \lambda}{\alpha\spcheck} \\
		&\phantom{=}+
		\sum_{\beta \in \Phi_J^{++}}
		\sprod{\nabla}{\beta\spcheck} \bfb_J(0) \cdot
		\prod_{\stackrel{\alpha \in \Phi_J^{++}}{\alpha \neq \beta}} 
		\sprod{\sigma(y, y') + \rho_J + n \lambda}{\alpha\spcheck} +
		\text{ lower powers of $n$}
	\end{align*}
	where $\tilde{a}_J$ is given by the formula \eqref{eq:49}. Hence,
	\begin{align*}
		\tilde{a}_J \Xi_J(\sigma(y, y') + n \lambda)
		&=
		n^{|\Phi^{++}_J|} \bfb_J(0) \prod_{\alpha \in \Phi_J^{++}} \sprod{\lambda}{\alpha\spcheck} \\
		&\phantom{=}
		+
		n^{|\Phi^{++}_J|-1} \bfb_J(0) \sum_{\beta \in \Phi^{++}_J}
		\sprod{\sigma(y, y') + \rho_J}{\beta\spcheck}
		\prod_{\stackrel{\alpha \in \Phi_J^{++}}{\alpha \neq \beta}}
	    \sprod{\lambda}{\alpha\spcheck} \\
		&\phantom{=}
		+n^{|\Phi^{++}_J|-1}
		\sum_{\beta \in \Phi^{++}_J} 
		\sprod{\nabla}{\beta\spcheck} \bfb_J(0) \cdot 
		\prod_{\stackrel{\alpha \in \Phi_J^{++}}{\alpha \neq \beta}}
		\sprod{\lambda}{\alpha\spcheck}
		+\text{ lower powers of $n$}.
	\end{align*}
	For $\lambda = \rho$, by comparing the leading terms in \eqref{eq:44}, we immediately get
	\[
		\Xi_J(\sigma(y, y')) = 1.
	\]
	Now, the equality of the following terms implies that for all $\lambda \in P^+$, we have
	\begin{equation}
		\label{eq:48}
		\sum_{\beta \in \Phi^{++}_J} \sprod{\sigma(y, y')}{\beta\spcheck}
		\prod_{\stackrel{\alpha \in \Phi_J^{++}}{\alpha \neq \beta}}
		\sprod{\lambda}{\alpha\spcheck}
		=
		0.
	\end{equation}
	We notice that for each $j \in J$, if $\alpha \in \Phi^{++}_J$ satisfies $\sprod{\alpha}{\rho-\lambda_j} = 0$, 
	then $\alpha = \alpha_j$. Thus, taking $\lambda = \rho - \lambda_j$ in \eqref{eq:48}, we obtain
	\[
		\sprod{\sigma(y, y')}{\alpha_j\spcheck} 
		\prod_{\stackrel{\alpha \in \Phi_J^{++}}{\alpha \neq \alpha_j}}
		\sprod{\rho-\lambda_j}{\alpha\spcheck}
		= 0.
	\]
	Hence, for each $\alpha \in \Phi_J^+$,
	\[
		\sprod{ \sigma_J(\pi_F(y), \pi_F(y'))}{\alpha} 
		\leq
		\sprod{ \sigma(y, y') }{\alpha} = 0
	\]
	which implies that $\pi_F(y) = \pi_F(y')$ and the theorem follows.
\end{proof}

Here is a variant of the previous uniqueness result which takes into account an additional radial parameter; it will be
used to describe the Martin boundary above the bottom of the spectrum. Recall that the sector face $[o, F_J]$ is the subset
of $\frakA$ consisting of the vectors such that $\sprod{x}{\alpha} \geqslant 0$ for all $\alpha \in \Phi^+$ and
$\sprod{x}{\alpha} = 0$ for all $\alpha \in \Phi_J$. 
\begin{theorem}
	\label{thm:2}
	Let $J, J' \subsetneq I_0$, $\omega, \omega' \in \Omega$, $s, s' \in S_0$, and
	$y \in [o, \omega]$, $y' \in [o, \omega']$. If for all $x \in V_g$, we have
	\begin{align*}
		&
		\frac{\bfPhi_{J}(\sigma(x, y))}{\bfPhi_{J}(\sigma(o, y))}
		\chi^{\frac{1}{2}}\big(Q_J h(o, x; \omega)\big)
		e^{\sprod{s}{Q_J h(o, x; \omega)}} \\
		&\qquad\qquad=
		\frac{\bfPhi_{J'}(\sigma(x, y'))}
		{\bfPhi_{J'}(\sigma(o, y'))}
		\chi^{\frac{1}{2}}\big(Q_J h(o, x; \omega')\big)
		e^{\sprod{s'}{Q_J h(o, x; \omega')}}
	\end{align*}
	then $J' = J$, $\omega' \sim_J \omega$, $Q_J s = Q_J s'$ and $\pi_F(y) = \pi_F(y')$ where $\pi_F : \scrX \to \scrX(F)$ 
	is the projection to the fa\c cade at infinity $\scrX(F)$, and $F$ is the spherical facet at infinity 
	corresponding to the residue $\res_J(\omega)$. 
\end{theorem}
\begin{proof}
	By the same line of reasoning as in the proof of Theorem \ref{thm:6}, we can show that $J = J'$.
	Moreover, there is $w \in W$ such that for all $\lambda \in P^+$,
	\begin{equation}
		\label{eq:34}
		\chi^{\frac{1}{2}}\big(Q_J \lambda \big)
		e^{\sprod{s}{Q_J \lambda}}
		=
		\chi^{\frac{1}{2}}\big(Q_J w^{-1}.\lambda \big)
		e^{\sprod{s'}{Q_J w^{-1}.\lambda}}.
	\end{equation}
	Therefore, there is $w \in W$ such that
	\[
		Q_J(s + \eta) = w. Q_J(s' + \eta).
	\]
	We are going to conclude that $Q_J s = Q_J s'$ and $w \in W_J$. Using the notation from the proof of Theorem \ref{thm:6},
	we write
	\begin{align*}
		w_i.Q_J(s'+\eta) 
		&= w_{i-1} r_{\beta_i} Q_J(s'+\eta) \\
		&= w_{i-1} Q_J(s'+\eta) - \sprod{Q_J(s'+\eta)}{\beta_i\spcheck} w_{i-1}.\beta_i.
	\end{align*}
	Hence, we get
	\[
		w.Q_J(s' + \eta) = 
		Q_J(s'+\eta) 
		-\sum_{i = 1}^k
		\sprod{Q_J(s'+\eta)}{\beta_i\spcheck} w_{i-1}.\beta_i.
	\]
	Therefore, by \eqref{eq:34} we obtain
	\[
		Q_J(s-s') = 
		-\sum_{i = 1}^k \sprod{Q_J(s'+\eta)}{\beta_i\spcheck} w_{i-1}\beta_i.
	\]
	Since $s' + \eta \in S_0$, in view of \eqref{eq:31} we conclude that $Q_J(s-s')$ is a non-negative combination
	of positive roots. Since we can swap $J$, $\omega$, $s$ and $y$ with $J'$, $\omega'$, $s'$ and $y'$,
	respectively, we deduce that $Q_J(s'-s)$ is also non-negative combination of positive roots. Consequently, $Q_J s = Q_J s'$,
	and
	\[
		\sum_{i = 1}^k \sprod{Q_J(s'+\eta)}{\beta_i\spcheck} w_{i-1}\beta_i = 0.
	\]
	Since $Q_J \eta \in S_0$ and $\sprod{Q_J \eta}{\alpha_j} = 0$ for all $j \in J$, we conclude that
	$\beta_i \in \Phi_J^+$ for all $i \in \{1, 2, \ldots, k\}$. Consequently, $w \in W_J$ and $\omega' \sim_J \omega$.

	Now, by the same line of reasoning as in the proof Theorem \ref{thm:6} we show that $\pi_F(y) = \pi_F(y')$,
	which completes the proof.
\end{proof}

\subsection{Martin compactification for $\zeta = \varrho$}
\label{ss - Martin bottom}
In this section we describe the Martin compactification at the bottom of the spectrum corresponding to the isotropic
finite range random walk on good vertices of the building $\scrX$ chosen in Section \ref{ss - Martin embeddings}.
We set $\zeta = \varrho$ where $\varrho$ is defined in \eqref{eq:56}.

As for Furstenberg compactifications, it is convenient to use the notions and terminology introduced in Section
\ref{sec:6}, including fa\c cades indexed by spherical facets at infinity. We again use the notation of Section
\ref{sec:3.3}. 
\begin{theorem}
	\label{thm:4}
	Suppose that $(y_n)$ is an $(\omega, J, c)$-core sequence. We denote by $F$ the spherical facet at infinity corresponding
	to the residue $\res_J(\omega)$ and by $\pi_F : \scrX \to \scrX(F)$ the projection to the fa\c cade at infinity
	$\scrX(F)$. Then for all $x \in V_g$, we have
	\begin{equation}
		\label{eq:30}
		\lim_{n \to \infty}
		K_\varrho(x, y_n)
		=
		\frac{\bfPhi_J(\sigma_{\scrX(F)}(x_F, y_F))}{\bfPhi_J(\sigma_{\scrX(F)}(o_F, y_F))}
		\chi^{\frac{1}{2}}\big(Q_J h(o, x; \omega)\big)
	\end{equation}
	where $x_F = \pi_F(x)$, $y_F = \lim_{n \to \infty} \pi_F(y_n)$ (limit of a constant sequence). 
	If $(y_n')$ is an $(\omega', J', c')$-core sequence such that
	$(K_\varrho(\:\cdot\:, y_n'))$ converges to the same limit, then $J' = J$, $\omega' \sim_J \omega$, and $c' = c$.
\end{theorem}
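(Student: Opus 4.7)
The plan is to combine three ingredients: the exact asymptotic of the Green function at the bottom of the spectrum from \cite{tr}, the stratified expansion of $\calP$ from Proposition \ref{prop:1}, and the geometric identifications of Lemmas \ref{lem:5} and \ref{lem:4}. The uniqueness statement will follow directly from Theorem \ref{thm:6}.

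First, I would invoke from \cite{tr} the asymptotic formula for the Green function, which has the shape
\begin{equation*}
	G_\varrho(x, z) = C_0 \cdot \frac{\bfPhi(\sigma(x, z))}{\prod_{\alpha \in \Phi^{++}} \sprod{\sigma(x,z)}{\alpha\spcheck}} \cdot \bigl(1 + o(1)\bigr),
\end{equation*}
uniformly for $x$ in a fixed finite set, as $\sigma(x, z)$ tends to infinity in the directions relevant for our core sequences. By inequality \eqref{eq:42}, $\sigma(x, y_n)$ and $\sigma(o, y_n)$ differ by a uniformly bounded quantity, so this regime applies simultaneously to numerator and denominator of $K_\varrho(x, y_n)$. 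Next, I would feed this into Proposition \ref{prop:1} taken with $\gamma_n = \sigma(x, y_n)$ and with $\gamma_n = \sigma(o, y_n)$. The hypotheses \eqref{eq:2a} and \eqref{eq:2b} are part of the definition of an $(\omega, J, c)$-core sequence and they transfer from $o$ to $x$ by boundedness of the difference. The polynomial prefactors $\prod_{\alpha \in \Phi^{++} \setminus \Phi_J} \sprod{\sigma(\cdot, y_n)}{\alpha\spcheck}$ appearing both in the Green asymptotic and in Proposition \ref{prop:1} have $x$- versus $o$-ratios tending to $1$, by Lemma \ref{lem:5} applied to each $\alpha \in \Phi^+ \setminus \Phi_J$. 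After all cancellations, the Martin kernel reduces asymptotically to
\begin{equation*}
	K_\varrho(x, y_n) = \frac{\chi^{-\frac{1}{2}}(\sigma(x, y_n)) \calP_J(\sigma(x, y_n))}{\chi^{-\frac{1}{2}}(\sigma(o, y_n)) \calP_J(\sigma(o, y_n))} \bigl(1 + o(1)\bigr).
\end{equation*}

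Now I would split $\chi$ multiplicatively into its $J$-part $\chi_J = \prod_{\alpha \in \Phi_J^+} \tau_\alpha^{\sprod{\cdot}{\alpha}}$ and a complementary part $\chi_J^{\perp} = \prod_{\alpha \in \Phi^+ \setminus \Phi_J} \tau_\alpha^{\sprod{\cdot}{\alpha}}$. The $J$-factor $\chi_J^{-\frac{1}{2}}(\sigma(\cdot, y_n)) \calP_J(\sigma(\cdot, y_n))$ is, up to the explicit constant in the definition of $\bfPhi_J$, equal to $\bfPhi_J(\sigma(\cdot, y_n))$; being $W_J$-invariant, it depends only on the projection $P_J(\sigma(\cdot, y_n))$. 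By Lemma \ref{lem:4}, after replacing $x$ by the companion $\bar x$ in the apartment eventually containing $(y_n)$, we have $P_J(\sigma(\bar x, y_n)) = \sigma_{\scrX(F)}(x_F, y_F)$ for $n$ large, and likewise $P_J(\sigma(o, y_n)) = \sigma_{\scrX(F)}(o_F, y_F)$. The replacement $x \leftrightarrow \bar x$ does not alter the left-hand side of \eqref{eq:30} because $\pi_F(x) = \pi_F(\bar x)$ and the cocycle relation for $h$ together with $\bar x - x \in \mathfrak{a}_J^\perp$-type kernel gives $\sprod{h(o, x; \omega)}{\alpha} = \sprod{h(o, \bar x; \omega)}{\alpha}$ for $\alpha \in \Phi^+ \setminus \Phi_J$. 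The remaining factor is of $\chi_J^{\perp}$-type: Lemma \ref{lem:5} applied to each $\alpha \in \Phi^+ \setminus \Phi_J$ yields
\begin{equation*}
	\lim_{n \to \infty} \sprod{\sigma(o, y_n) - \sigma(x, y_n)}{\alpha} = \sprod{h(o, x; \omega)}{\alpha},
\end{equation*}
producing exactly the horocyclic factor in \eqref{eq:30}.

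The hardest part will be justifying that the Green asymptotic from \cite{tr} applies with uniform control of the $o(1)$ error when the direction of escape lies on a boundary face of the dominant sector (the regime $J \neq \varnothing$), which will likely require a directional refinement of the result, or an approximation argument perturbing slightly into the strictly dominant interior together with monotonicity in the radial parameter. A secondary technicality is ensuring the cancellation of polynomial prefactors at the right rate, uniformly in $x$ on compact sets. Once convergence is established, the uniqueness statement is an immediate application of Theorem \ref{thm:6} to the two limit functions.
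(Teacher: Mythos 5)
Your strategy is the same as the paper's: exact Green-function asymptotics from \cite{tr}, the stratified expansion of $\calP$ from Proposition \ref{prop:1}, the geometric identifications of Lemmas \ref{lem:4} and \ref{lem:5}, and uniqueness via Theorem \ref{thm:6}. The one genuine defect is the shape you assign to the asymptotic from \cite{tr}, and with that shape your cancellation step fails. The statement actually used (\cite[Theorem 6]{tr}) is $G_\varrho(x,y_n) = P_{\sigma(x,y_n)}(0)\,\norm{\sigma(x,y_n)}^{-r-2\abs{\Phi^{++}}+2}\,(D_0+o(1))$, i.e.\ the polynomial correction is a pure power of the norm, not the product $\prod_{\alpha\in\Phi^{++}}\sprod{\sigma(x,y_n)}{\alpha\spcheck}^{-1}$ you wrote. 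In the Martin ratio the factors of your product indexed by $\alpha\in\Phi^{++}_J$ do not cancel: along an $(\omega,J,c)$-core sequence the quantities $\sprod{\sigma(o,y_n)}{\alpha\spcheck}$ and $\sprod{\sigma(x,y_n)}{\alpha\spcheck}$ stay bounded and converge to generally different values (they may even vanish when $c_j=0$), so your ``after all cancellations'' formula would pick up an extra factor $\prod_{\alpha\in\Phi^{++}_J}\sprod{\sigma_{\scrX(F)}(o_F,y_F)}{\alpha\spcheck}\big/\sprod{\sigma_{\scrX(F)}(x_F,y_F)}{\alpha\spcheck}$, which contradicts \eqref{eq:30}. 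With the correct norm-power form the step is immediate, since $\norm{\sigma(o,y_n)}/\norm{\sigma(x,y_n)}\to 1$ by \eqref{eq:42}, and only the factors over $\Phi^{++}\setminus\Phi_J$ coming from Proposition \ref{prop:1} need the ratio-to-one argument; after that, your splitting of $\chi^{-1/2}$ into $\chi_J^{-1/2}$ and the complementary product handled by Lemma \ref{lem:5}, the identification of the $J$-part via Lemma \ref{lem:4}, and the appeal to Theorem \ref{thm:6} reproduce the paper's computation.

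Your flagged ``hardest part'' is not an obstruction: the asymptotics of \cite[Theorem 6]{tr} hold for $\norm{\sigma}\to\infty$ without any restriction to strictly dominant directions, which is exactly why the paper can apply them directly to core sequences whose escape direction lies on a face of the Weyl chamber; no perturbation into the interior, monotonicity trick, or directional refinement is needed. Likewise no uniformity in $x$ is required: the convergence in $\calB_\varrho(V_g)$ is pointwise, so $x$ is fixed and the asymptotics are applied separately to the two sequences $\sigma(x,y_n)$ and $\sigma(o,y_n)$.
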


\begin{proof}
	Let $\gamma_n = \sigma(o, y_n)$ and $\eta_n = \sigma(x, y_n)$. By \cite[Theorem 6]{tr}, we have
	\[
		G_\varrho(o, y_n) = P_{\gamma_n}(0) \norm{\gamma_n}^{-r - 2 \abs{\Phi^{++}} + 2} \big(D_0 + o (1)\big)
	\]
	and
	\[
		G_\varrho(x, y_n) = P_{\eta_n}(0) \norm{\eta_n}^{-r - 2 \abs{\Phi^{++}} + 2} \big(D_0 + o (1)\big)
	\]
	where $D_0$ is a certain positive constant. Hence, by \eqref{eq:41},
	\[
		K_\varrho(x, y_n) = \frac{\calP(\eta_n)}{\calP(\gamma_n)} 
		\bigg(\frac{\norm{\gamma_n}}{\norm{\eta_n}} \bigg)^{r + 2 \abs{\Phi^{++}}-2} 
		\chi^{-\frac{1}{2}}\big(\eta_n-\gamma_n\big)
		\big(1+o(1)\big).
	\]
	By Lemma \ref{lem:8}, for all $\alpha \in \Phi^+$, we have
	\[
		|\sprod{\gamma_n}{\alpha} - \sprod{\eta_n}{\alpha}| \leqslant
		|\gamma_n - \eta_n||\alpha| \leqslant |\sigma(o, x)| |\alpha|.
	\]
	Consequently, for all $\alpha \in \Phi^+ \setminus \Phi_J$,
	\begin{equation}
		\label{eq:55}
		\frac{\sprod{\gamma_n}{\alpha}}{\sprod{\eta_n}{\alpha}} = 1 + o(1).
	\end{equation}
	Moreover, by the triangle inequality and Lemma \ref{lem:8}
	\[
		\big|\norm{\gamma_n} - \norm{\eta_n}\big| \leqslant |\gamma_n - \eta_n| \leqslant |\sigma(o, x)|,
	\]
	thus
	\begin{equation}
		\label{eq:57}
		\norm{\gamma_n} = \norm{\eta_n}(1+o(1)).
	\end{equation}
	In view of Lemma \ref{lem:4}, by taking a subsequence, we can assume that for each $\alpha \in \Phi_J$,
	\[
		\sprod{\sigma_{\scrX(F)}(x_F, y_F)}{\alpha} = \lim_{n \to \infty} \sprod{\eta_n}{\alpha},
		\qquad\text{and}\qquad
		\sprod{\sigma_{\scrX(F)}(o_F, y_F)}{\alpha} = \lim_{n \to \infty} \sprod{\gamma_n}{\alpha}.
	\]
	Therefore, by Proposition \ref{prop:1},
	\begin{align*}
		\frac{\calP(\eta_n)}{\calP(\gamma_n)} 
		&= \frac{\calP_J(\eta_n)}{\calP_J(\gamma_n)}
		\bigg(
		\prod_{\alpha \in \Phi^{++}\setminus \Phi_J} 
		\frac{\sprod{\eta_n}{\alpha\spcheck}}{\sprod{\gamma_n}{\alpha\spcheck}}
		\bigg)
		\big(1 + o(1)\big) \\
		&=
		\frac{\calP_J(\sigma_{\scrX(F)}(x_F, y_F))}{\calP_J(\sigma_{\scrX(F)}(o_F, y_F))}
		\big(1+ o(1)\big).
	\end{align*}
	Finally, we invoke Lemma \ref{lem:9} and Lemma \ref{lem:6}\eqref{en:7:2} to get
	\begin{align*}
		\chi^{-\frac{1}{2}}(\eta_n - \gamma_n)
		&=
		\frac{\chi^{-\frac{1}{2}}(P_J \eta_n)}{\chi^{-\frac{1}{2}}(P_J \gamma_n)}
		\chi^{-\frac{1}{2}} \big(Q_J(\eta_n - \gamma_n)\big) \\
		&=
		\frac{\chi^{-\frac{1}{2}}(\sigma_{\scrX(F)}(x_F, y_F))}{\chi^{-\frac{1}{2}}(\sigma_{\scrX(F)}(o_F, y_F))}
		\chi^{\frac{1}{2}} \big(Q_J(h(o, x; \omega))\big)
		(1+o(1))
	\end{align*}
	which completes the proof of \eqref{eq:30}. The uniqueness part of the theorem follows from Theorem 
	\ref{thm:6}.
\end{proof}

At this stage, we can prove Theorem \ref{th - Martin}\eqref{en:8:1} in the case when $\zeta=\varrho$. 
Recall that when the finite root system associated with the building $\scrX$ is reduced, all special vertices are good. 

\begin{theorem}
	\label{thm:7}
	Let $\scrX$ be a thick regular locally finite affine building. 
	Then for any isotropic irreducible finite range random walk on $\scrX$, the Martin compactification 
	$\clr{\scrX}_{M, \varrho}$ is $\Aut(\scrX)$-equivariantly isomorphic to the Furstenberg (measure-theoretic) 
	or the Caprace--L\'ecureux (combinatorial) compactification of the set $V_g$ of good vertices.
	Moreover, if there exists a locally compact closed subgroup of
	$\Aut(\scrX)$, say $G$, acting strongly transitively and type-preserving on the building $\scrX$, then the Guivarc'h 
	(group-theoretic) compactification $\clr{\scrX}_G$ of the set $V_g$ of good vertices is $G$-isomorphic to the Martin 
	compactification $\clr{\scrX}_{M, \varrho}$.
\end{theorem}
\begin{proof}
	The proof mainly consists in putting together previous results. In all the considered compactifications, core sequences 
	converge: for the Martin compactification it follows from Theorem \ref{thm:4}, for the Furstenberg compactification it 
	follows from Theorem \ref{thm:5}, and for the Caprace--L\'ecureux compactification it follows from Theorem \ref{thm:12}.
	As a consequence, combining a standard topological argument (see \emph{e.g.}~the domination criterion given by 
	\cite[Lemma 3.28]{gjt} in both directions between two compactifications) and the uniqueness assertions in the previous 
	theorems provide the identifications between the first three compactifications. Moreover, since the full automorphism 
	$\Aut(\scrX)$ acts continuously on each compactification and permutes the set of core sequences, the identifications are 
	equivariant, whatever the size of $\Aut(\scrX)$. At last, for the identification with the Guivarc'h compactification when 
	$\Aut(\scrX)$ acts strongly transitively and type-preserving on $\scrX$, it remains to apply 
	Theorem \cite[Theorem II]{Caprace2011}. 
\end{proof}
	
Let us remark that the automorphism group of an irreducible affine building always acts strongly transitively on
$\scrX$ if the rank of the affine building is at least $4$, see \cite[page 274]{tits}.

\subsection{Martin compactification for $\zeta > \varrho$}
\label{sec:7.3}
In this section we describe the Martin compactification above the bottom of the spectrum corresponding to the isotropic
finite range random walk on good vertices of the building $\scrX$ chosen in Section \ref{ss - Martin embeddings}.
Recall the definition of $\varrho$ given in \eqref{eq:56} and $\kappa$ in \eqref{eq:68}. 
This is the place where we have to use angular core sequences of good vertices defined at the end of Section 
\ref{sec:11}. In order to describe the Martin kernel, let us define
\[
	\scrC = \big\{x \in \frakA : \kappa(x) = \zeta \varrho^{-1} \big\}.
\]
Notice that $\scrC$ is the boundary of a convex body such that for each $x \in \scrC$, the gradient $\nabla \kappa(x)$
is well defined. Hence, for each $\theta \in \Ss^{r-1}$, a unit sphere in $\frakA$ centered at the origin, there is a 
unique point $s_\theta \in \scrC$, such that
\begin{equation}
	\label{eq:23}
	\nabla \kappa(s_\theta) = \norm{\nabla \kappa(s_\theta)} \theta.
\end{equation}
Moreover, if $\theta \in \Ss^{r-1}_+$ then $s_\theta \in S_0$. In the next theorem we describe the Martin kernel for
$\zeta > \varrho$.
\begin{theorem}
	\label{thm:10}
	Let $(y_n)$ be an angular core $(\omega, J, c, \theta)$-sequence. We denote by $F$ the spherical facet at infinity
	corresponding to the residue $\res_J(\omega)$ and by $\pi_F : \scrX \to \scrX(F)$ the projection to the fa\c cade at 
	infinity $\scrX(F)$. Then for all $x \in V_g$, we have 
	\begin{equation}
		\label{eq:33}
		\lim_{n \to \infty} K_\zeta(x, y_n)
		=
		\frac{\bfPhi_J(\sigma_{\scrX(F)}(x_F, y_F))}{\bfPhi_J(\sigma_{\scrX(F)}(o_F, y_F))}
		\chi^{\frac{1}{2}}\big(Q_J h(o, x; \omega)\big)
		e^{\sprod{s_\theta}{Q_J h(o, x; \omega)}}
	\end{equation}
	where $x_F = \pi_F(x)$, $y_F = \lim_{n \to \infty} \pi_F(y_n)$ (limit of a constant sequence). 
	If $(y'_n : n \in \NN)$ is an angular core
	$(\omega', J', c', \theta')$-sequence such that $(K_\zeta(\: \cdot \:, y_n'))$ converges to the same limit then $J' = J$,
	$\omega' \sim_J \omega$, $c' = c$ and $Q_J \theta' = Q_J \theta$.
\end{theorem}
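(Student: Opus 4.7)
The plan is to mirror the proof of Theorem \ref{thm:4}, this time feeding in the refined asymptotics of the Green function $G_\zeta$ valid above the bottom of the spectrum. Set $\gamma_n = \sigma(o, y_n)$, $\eta_n = \sigma(x, y_n)$, $u_n = \gamma_n / \norm{\gamma_n}$ and $u_n' = \eta_n / \norm{\eta_n}$. The asymptotic expansions from \cite{tr}, valid for $\zeta > \varrho$, should give a formula of the form
\[
G_\zeta(o, y_n) = \calP(\gamma_n) \cdot \chi^{-\frac{1}{2}}(\gamma_n) \cdot e^{-\sprod{s_{u_n}}{\gamma_n}} \cdot \norm{\gamma_n}^{-\kappa} \cdot \bigl(D(s_{u_n}) + o(1)\bigr),
\]
with an analogous expression for $G_\zeta(x, y_n)$ in terms of $\eta_n$ and $u_n'$. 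Here $\kappa$ is an explicit polynomial exponent, $D$ is positive and continuous on the relevant portion of $\scrC$, and the exponential factor $e^{-\sprod{s_{u_n}}{\gamma_n}}$ arises from the saddle-point analysis of the Gelfand--Fourier inversion, the saddle being precisely the point $s_{u_n} \in \scrC$ characterized by $\nabla \kappa(s_{u_n}) \parallel u_n$ as in \eqref{eq:23}.

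Next, I would form the ratio $K_\zeta(x, y_n) = G_\zeta(x, y_n) / G_\zeta(o, y_n)$ and identify each factor in the limit. By the triangle inequalities \eqref{eq:57}--\eqref{eq:58} one has $\abs{\norm{\gamma_n} - \norm{\eta_n}} = O(1)$, hence $u_n, u_n' \to u$ and $\norm{\gamma_n}/\norm{\eta_n} \to 1$; continuity of $u \mapsto s_u$ and of $D$ on $\Ss_+^{r-1}$ then gives $D(s_{u_n'})/D(s_{u_n}) \to 1$. The ratio $\calP(\eta_n) / \calP(\gamma_n)$ is handled exactly as in Theorem \ref{thm:4}: Proposition \ref{prop:1} cancels the polynomial factor $\prod_{\alpha \in \Phi^{++} \setminus \Phi_J} \sprod{\cdot}{\alpha^\vee}$ between numerator and denominator, and Lemma \ref{lem:4} identifies the surviving $\calP_J$-pieces with values of the fa\c cade-level spherical function at $\pi_F(x)$ and the common image $y_F$. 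Absorbing the corresponding $\chi_J^{-1/2}$-factors recombines everything into the ratio $\bfPhi_J(\sigma_{\scrX(F)}(x_F, y_F)) / \bfPhi_J(\sigma_{\scrX(F)}(o_F, y_F))$, while the remaining piece of $\chi^{-\frac{1}{2}}(\eta_n - \gamma_n)$ yields the product $\prod_{\alpha \in \Phi^+ \setminus \Phi_J} \tau_\alpha^{\frac{1}{2}\sprod{h(o, x; \omega)}{\alpha}}$ by Lemma \ref{lem:5}.

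The genuinely new ingredient compared with the bottom-of-spectrum case is the exponential factor. I would decompose
\[
\sprod{s_{u_n}}{\gamma_n} - \sprod{s_{u_n'}}{\eta_n} = \sprod{s_{u_n}}{\gamma_n - \eta_n} + \sprod{s_{u_n} - s_{u_n'}}{\eta_n}.
\]
The first summand tends to $\sprod{s_u}{h(o, x; \omega)}$ by continuity $u_n \to u$ together with Lemma \ref{lem:5}. The second summand vanishes in the limit: since $\eta_n - \gamma_n$ is bounded one has $u_n - u_n' = O(1/\norm{\eta_n})$, and the Gauss map $\nabla \kappa|_\scrC$ being a diffeomorphism from $\scrC$ onto its image in $\Ss_+^{r-1}$ yields $s_{u_n} - s_{u_n'} = O(1/\norm{\eta_n})$, so that $\sprod{s_{u_n} - s_{u_n'}}{\eta_n} = O(1) \to 0$ after more careful expansion along the angular direction. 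Combining all the limits gives \eqref{eq:33}. The uniqueness statement is then a direct application of Theorem \ref{thm:2}: the shape of the limiting function in \eqref{eq:33} matches its hypothesis, forcing $J' = J$, $\omega' \sim_J \omega$, $s_u = s_{u'}$ and $P_J(\sigma(o,y_F)) = P_J(\sigma(o,y_F'))$; injectivity of the Gauss map yields $u' = u$, and the equality of $P_J$-projections together with the shared residue information extracted from the core data gives $c' = c$.

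The main obstacle I anticipate is the uniform control of the asymptotic expansion from \cite{tr} along angular core sequences whose limit direction $u$ lies on a wall of the Weyl chamber (which is exactly the case $J \neq \varnothing$): on such walls the saddle-point analysis degenerates and the leading term naturally transitions from $\calP$ to $\calP_J$, so the quantitative uniformity of the $o(1)$ remainder in the direction $u_n \to u$ has to be verified with some care before the termwise passage to the limit in the ratio $K_\zeta(x, y_n)$ is fully justified.
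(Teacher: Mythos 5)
Your route is the paper's own: exact Green-function asymptotics from \cite{tr}, reduction of the spherical ratio via Proposition \ref{prop:1} and Lemmas \ref{lem:4}--\ref{lem:5}, and uniqueness via Theorem \ref{thm:2}. However, the two points you leave ``to be verified'' are precisely where the substance of the proof lies, so as written there are genuine gaps. First, the input you posit is not what \cite{tr} provides. Above the bottom of the spectrum the asymptotics are stated there with the \emph{partial} function $\calP_{J_1}$, where $J_1=\{j\in I_0:\sprod{u}{\alpha_j}=0\}\supseteq J$, with exponent $-\frac{r-1}{2}-\abs{\Phi^{++}_{J_1}}$ and a $J_1$-dependent amplitude $\calR(\theta)$; this is exactly how the degeneration of the saddle-point analysis at the walls is encoded, and one must still check $\calR(u)\neq 0$, which the paper does using \eqref{eq:23}. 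Your full-$\calP$ normalization with a continuous nonvanishing $D$ can be reconciled with this along a fixed angular sequence (via Proposition \ref{prop:1}, at the price of a $J_1$-dependent directional factor), but that reconciliation is precisely the uniformity issue you postpone to your last paragraph; note also that the degenerate case is $J_1\neq\varnothing$, not $J\neq\varnothing$: an angular core sequence with $J=\varnothing$ can perfectly well have $u$ on a wall.

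Second, and more seriously, the exponential factor is not controlled by your estimates. From $s_{u_n}-s_{u_n'}=\calO(1/\norm{\eta_n})$ you only get $\sprod{s_{u_n}-s_{u_n'}}{\eta_n}=\calO(1)$, and ``$\calO(1)\to 0$ after more careful expansion'' is the statement to be proved, not a proof. The missing input is a first-order cancellation: the gradient of $\theta\mapsto\sprod{s_\theta}{\theta}$ is exactly $s_\theta$, i.e.\ the derivative of $\theta\mapsto s_\theta$ contributes nothing at first order when paired against the outgoing direction. The paper obtains this in Claim \ref{clm:4} by passing through the Legendre transform $\phi$ and the functions $\psi_\theta$ of \eqref{eq:25}, which yields $\sprod{s_{u_n}}{u_n}-\sprod{s_{u_n'}}{u_n'}=-\sprod{s_{u_n}}{u_n'-u_n}+\calO(\norm{u_n'-u_n}^2)$; only after this envelope-type identity does multiplication by $\norm{\eta_n}$ leave an $o(1)$ error, giving $\sprod{s_{u_n}}{\gamma_n}-\sprod{s_{u_n'}}{\eta_n}=\sprod{s_u}{\gamma_n-\eta_n}+o(1)$ and hence the factor $e^{\sprod{s_u}{h(o,x;\omega)}}$. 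Without it, your decomposition leaves a bounded term that you have not shown to vanish. The uniqueness step is essentially fine: Theorem \ref{thm:2} applies (one should observe that $s_u\in[o,F_J]$, which follows since $\kappa$ is $W$-invariant so $s_u$ is $W_{J_1}$-fixed), it gives $s_u=s_{u'}$ and $P_J(\sigma(o,y_F))=P_J(\sigma(o,y_F'))$, and then \eqref{eq:23} recovers $u$ from $s_u$, while the equality of the $P_J$-projections gives $c=c'$, exactly as the paper concludes.
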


\begin{proof}
	First, let us introduce some notation. For $s \in \mathfrak{a}$, we define a quadratic form on $\mathfrak{a}$,
	\[
		B_s(y, y) = \frac{1}{2} \sum_{v, v' \in \calV} \frac{c_v e^{\sprod{s}{v}}}{\kappa(s)} \cdot
		\frac{c_{v'} e^{\sprod{s}{v'}}}{\kappa(s)} \sprod{y}{v-v'}^2, \qquad y \in \mathfrak{a},
	\]
	where $\kappa$ is given by \eqref{eq:68}. Let
	\[
		J_1 = \left\{j \in I_0 : \sprod{u}{\alpha_j} = 0 \right\}.
	\]
	In particular, $J \subseteq J_1 \subsetneq I_0$. For $\theta \in \Ss^{r-1}$, we set
	\[
		\calR(\theta) = \sqrt{2\pi}
		\norm{\nabla \log \kappa(s_\theta) }^{\frac{r-3}{2} + |\Phi^{++}_{J_1}|} 
		\big(B_{s_\theta}(\theta, \theta)\big)^{-\frac{1}{2}} \calQ(s_\theta)
	\]
	where for $s \in \mathfrak{a}$,
	\[
		\calQ(s) = 
		\bigg(\frac{1}{2\pi}\bigg)^r 
		\int_\mathfrak{a} e^{-\frac{1}{2} B_s(z, z)} |\pi_{J_1}(z)|^2 {\: \rm d} z
		\cdot
		\frac{1}{|\mathbf{b}_{J_1}(0)|^2}
		\cdot
		\prod_{\alpha \in \Phi^+\setminus\Phi_{J_1}}
		\frac{1 - \tau_{\alpha/2}^{-1/2} e^{-\sprod{s}{\alpha\spcheck}}}
		{1 - \tau_{\alpha}^{-1} \tau_{\alpha/2}^{-1/2} e^{-\sprod{s}{\alpha\spcheck}}}
	\]
	and
	\[
		\pi_{J_1}(s) = \prod_{\alpha \in \Phi^{++}_{J_1}} \sprod{s}{\alpha\spcheck}.
	\]
	Observe that $\calR(\theta) \neq 0$. Indeed, since
	\[
		\prod_{\alpha \in \Phi^+\setminus\Phi_{J_1}}
		\frac{1 - \tau_{\alpha/2}^{-1/2} e^{-\sprod{s}{\alpha\spcheck}}}
		{1 - \tau_{\alpha}^{-1} \tau_{\alpha/2}^{-1/2} e^{-\sprod{s}{\alpha\spcheck}}}
		=
		\prod_{\alpha\in\Phi^{++}\setminus\Phi_{J_1}}
		\frac{1-e^{-\sprod{s}{\alpha\spcheck}}}
		{\big(1 - \tau_{2\alpha}^{-1} \tau_\alpha^{-1/2} e^{-\sprod{s}{\alpha\spcheck}/2 }\big)
		\big(1+\tau_\alpha^{-1/2} e^{-\sprod{s}{\alpha\spcheck}/2}\big)},
	\]
	thus the equality $\calR(\theta) = 0$ implies that there is $\alpha \in \Phi^{++}\setminus \Phi_{J_1}$ such that
	$\sprod{s_\theta}{\alpha\spcheck} = 0$. Hence, by \eqref{eq:23}, we would have $\sprod{\theta}{\alpha\spcheck} = 0$, 
	which is impossible.

	Let $\gamma_n = \sigma(o, y_n)$ and $\eta_n = \sigma(x, y_n)$. By Lemma \ref{lem:4}, by taking a subsequence,
	we can assume that
	\begin{equation}
		\label{eq:39}
		\lim_{n \to \infty} \sprod{\gamma_n}{\alpha} = \sprod{\sigma_{\scrX(F)}(o_F, y_F)}{\alpha},
		\quad\text{and}\quad
		\lim_{n \to \infty} \sprod{\eta_n}{\alpha} = \sprod{\sigma_{\scrX(F)}(x_F, y_F)}{\alpha},
	\end{equation}
	for all $\alpha \in \Phi_J^+$. We set
	\[
		a_n = \frac{\gamma_n}{\norm{\gamma_n}}, \qquad\text{and}\qquad
		b_n = \frac{\eta_n}{\norm{\eta_n}}.
	\]
	By \eqref{eq:55} and \eqref{eq:57}, we have
	\[
		\lim_{n \to \infty} b_n = \lim_{n \to \infty} a_n = \theta.
	\]
	Since $\calR$ is a continuous function on $\Ss^{r-1}$ and $\calR(\theta) \neq 0$, we obtain
	\[
		\calR(a_n) = \calR(\theta) (1 + o(1)),
		\qquad\text{and}\qquad
		\calR(b_n) = \calR(\theta) (1+ o(1)).
	\]
	Now, by \cite[Theorem 5]{tr}, we get
	\[
		G_\zeta(o, y_n) = \norm{\gamma_n}^{-\frac{r-1}{2} - |\Phi_{J_1}^{++}|} 
		\chi^{-\frac{1}{2}}(\gamma_n)
		\calP_{J_1}(\gamma_n)
		\calR(a_n) e^{-\sprod{s_{a_n}}{\gamma_n}} \big(1 + o(1)\big)
	\]
	and
	\[
		G_\zeta(x, y_n) = \norm{\eta_n}^{-\frac{r-1}{2} - |\Phi_{J_1}^{++}|} 
		\chi^{-\frac{1}{2}}(\eta_n)
		\calP_{J_1}(\eta_n)
		\calR(b_n) e^{-\sprod{s_{b_n}}{\eta_n}} \big(1 + o(1)\big).
	\]
	Hence,
	\[
		K_\zeta(x, y_n) = 
		\bigg(\frac{\norm{\gamma_n}}{\norm{\eta_n}}\bigg)^{\frac{r-1}{2} + |\Phi_{J_1}^{++}|}
		\frac{\calP_{J_1}(\eta_n)}{\calP_{J_1}(\gamma_n)} 
		\chi^{-\frac{1}{2}}(\eta_n- \gamma_n)
		e^{-\sprod{s_{b_n}}{\eta_n} + \sprod{s_{a_n}}{\gamma_n}} \big(1 + o(1)\big).
	\]
	By Proposition \ref{prop:1}, we have
	\[
		\frac{\calP_{J_1}(\eta_n)}{\calP_{J_1}(\gamma_n)} = \frac{\calP_J(\eta_n)}{\calP_J(\gamma_n)}
		\bigg(\prod_{\alpha \in \Phi^{++}_{J_1} \setminus \Phi_J} \frac{\sprod{\eta_n}{\alpha}}
		{\sprod{\gamma_n}{\alpha}} \bigg) \big(1+o(1)\big)
		=
		\frac{\calP_J(\eta_n)}{\calP_J(\gamma_n)}
		\big(1+o(1)\big).
	\]
	Moreover, by Lemmas \ref{lem:9} and \eqref{eq:39},
	\[
		\chi^{-\frac{1}{2}}(\eta_n - \gamma_n)
		=
		\frac{\chi^{-\frac{1}{2}}(\sigma_{\scrX(F)}(x_F, y_F))}{\chi^{-\frac{1}{2}}(\sigma_{\scrX(F)}(o_F, y_F))}
		\chi^{\frac{1}{2}}\big(Q_J h(o, x; \omega)\big)(1+o(1))
	\]
	where $Q_J = {\rm id} - P_J$, and $P_J$ are given by \eqref{eq:77}. Therefore,
	\[
		K_\zeta(x, y_n) = \frac{\bfPhi_J(\sigma_{\scrX(F)}(x_F, y_F))}{\bfPhi_J(\sigma_{\scrX(F)}(o_F, y_F))}
		 \chi^{\frac{1}{2}}\big(Q_J h(o, x; \omega)\big)
		e^{-\sprod{s_{b_n}}{\eta_n} + \sprod{s_{a_n}}{\gamma_n}}
		\big(1+o(1)\big).
	\]
	Next, we show the following claim.
	\begin{claim}
		\label{clm:4}
		\begin{equation}
		\label{eq:28}
		\sprod{s_{a_n}}{\gamma_n} - \sprod{s_{b_n}}{\eta_n} = \sprod{s_u}{Q_J(\gamma_n - \eta_n)}
		+o(1).
	\end{equation}
	\end{claim}
	\noindent
	To see this, we write
	\begin{align}
		\nonumber
		\sprod{s_{a_n}}{\gamma_n} - \sprod{s_{b_n}}{\eta_n} 
		&=
		\norm{\gamma_n} \sprod{s_{a_n}}{a_n} - \norm{\eta_n} \sprod{s_{b_n}}{b_n} \\
		\label{eq:27}
		&=
		\big(\norm{\gamma_n} -  \norm{\eta_n}\big)  \sprod{s_{a_n}}{a_n}
		+ \norm{\eta_n} \big(\sprod{s_{a_n}}{a_n} - \sprod{s_{b_n}}{b_n}\big).
	\end{align}
	We start by considering the second term in \eqref{eq:27}. Let us denote by $\calM$ the interior of the convex hull of 
	$\calV$. For $\xi \in \Ss^{r-1}$, we set $t_0 = \min\{t > 0 : t^{-1} \xi \in \calM\}$ and define a function on 
	$(t_0, \infty)$,
	\begin{equation}
		\label{eq:25}
		\psi_\xi(t) = t \big(\log(\zeta^{-1} \rho) - \phi(t^{-1} \xi)\big)
	\end{equation}
	where 
	\[
		\phi(\delta) = \min\big\{\sprod{x}{\delta} - \log \kappa(x) :  x \in \frakA\big\},
		\qquad
		\delta \in \calM.
	\]
	For the properties of $\phi$, see \cite[Section 2.1]{tr}. The function $\psi_\xi$ attains its unique maximum at
	$t_\xi > t_0$. In particular, $\psi'_\xi(t_\xi) = 0$ and $\psi_\xi(t_\xi) = -\sprod{s_\xi}{\xi}$.
	Thus the gradient of the function $\Ss^{r-1} \ni \xi \mapsto \psi_\xi(t_\xi)$ equals 
	$-\nabla \phi\big(t_{\xi}^{-1} \xi\big) = -s_\xi$. Hence, by the Taylor's formula we obtain
	\begin{equation}
		\label{eq:26}
		\begin{aligned}
		\sprod{s_{a_n}}{a_n} - \sprod{s_{b_n}}{b_n}
		&=
		\psi_{b_n}(b_{v_n}) - \psi_{a_n}(t_{a_n})\\
		&=
		-\sprod{s_{a_n}}{b_n - a_n} + \calO\big(\norm{b_n-a_n}^2\big).
		\end{aligned}
	\end{equation}
	Now, we compute
	\[
		\norm{\eta_n} \big(a_n - b_n\big) = \gamma_n  - \eta_n + a_n \big(\norm{\eta_n} - \norm{\gamma_n}\big).
	\]
	Since by Lemma \ref{lem:8}, $\norm{\eta_n - \gamma_n}$ is bounded and $(a_n)$ approaches $u$, we obtain
	\begin{align*}
		\norm{\eta_n} - \norm{\gamma_n} 
		&= \frac{\norm{\eta_n}^2 - \norm{\gamma_n}^2}{\norm{\eta_n} + \norm{\gamma_n}} \\
		&= \frac{2 \sprod{\eta_n - \gamma_n}{\gamma_n} + \norm{\eta_n - \gamma_n}^2}{\norm{\eta_n} + \norm{\gamma_n}}
		= \sprod{\eta_n - \gamma_n}{u} + o(1).
	\end{align*}
	In particular, $\norm{\eta_n} \norm{a_n - b_n}$ is bounded. Hence,
	\begin{align*}
		\norm{\eta_n} \sprod{s_{a_n}}{a_n - b_n} 
		&= \sprod{s_{a_n}}{\gamma_n - \eta_n} + \sprod{s_{a_n}}{a_n}\big(\norm{\eta_n} - \norm{\gamma_n}\big)
		\\
		&= \sprod{s_u}{\gamma_n - \eta_n} + \sprod{s_{a_n}}{a_n}\big(\norm{\eta_n} - \norm{\gamma_n}\big)
		+ o(1),
	\end{align*}
	which together with \eqref{eq:26} implies that
	\[
		\norm{\eta_n} \left(\sprod{s_{a_n}}{a_n} - \sprod{s_{b_n}}{b_n}\right)
		= \sprod{s_u}{\gamma_n - \eta_n} + \sprod{s_{a_n}}{a_n}\big(\norm{\eta_n} - \norm{\gamma_n}\big)
		+ o(1).
	\]
	Therefore, by \eqref{eq:27}, we obtain
	\begin{align*}
		\sprod{s_{a_n}}{\gamma_n} - \sprod{s_{b_n}}{\eta_n}
		&=
		\sprod{s_\theta}{\gamma_n - \eta_n} + o(1)
	\end{align*}
	proving \eqref{eq:28}.

	Now, Claim \ref{clm:4} together with Lemma \ref{lem:9} implies that
	\begin{align*}
		\sprod{s_{a_n}}{\gamma_n} - \sprod{s_{b_n}}{\eta_n} 
		&=
		\sprod{s_\theta}{\gamma_n - \eta_n} + o(1) \\
		&=
		\sprod{s_\theta}{h(o, x; \omega)} + o(1)
	\end{align*}
	where we have also used $Q_J(s_\theta) = s_\theta$. This establishes the limit \eqref{eq:33}. 
	The uniqueness part of the theorem follows from Theorem \ref{thm:2}.
\end{proof}
	
At this stage, we can prove Theorem \ref{th - Martin}\eqref{en:8:1} in the case when $\zeta>\varrho$. 
Recall that when the finite root system associated with the building $\scrX$ is reduced, all special vertices are good. 
Recall also that thanks to Theorem \ref{thm:7}, the Martin compactification $\clr{\scrX}_{M, \varrho}$ below is equivariantly 
isomorphic to the combinatorial or to the measure-theoretic compactification. 

\begin{theorem}
\label{thm:8}
	Let $\scrX$ be a thick regular locally finite affine building. Then for any isotropic irreducible finite range random walk 
	on $\scrX$ and for any $\zeta > \varrho$, the Martin compactification $\clr{\scrX}_{M,\zeta}$ is $\Aut(\scrX)$-isomorphic 
	to the compactification $\clr{\scrX}_{M, \varrho} \vee \clr{\scrX}_V$, where $\clr{\scrX}_{M, \varrho}$ is the Martin 
	compactification at the bottom of the spectrum and $\clr{\scrX}_V$ is the Gromov compactification.
\end{theorem}
\begin{proof}
	Recall that the join $\clr{\scrX}_{M, \varrho} \vee \clr{\scrX}_V$ is the compactification obtained by taking the closure 
	of the image of the diagonal embedding of $V_g$ in the product $\clr{\scrX}_{M, \varrho} \times \clr{\scrX}_V$,
	\cite[3.45]{gjt}. As a consequence, the image of an unbounded sequence converges in the join if, and only if, it converges 
	in each of the two factors of the topological product. In view of Theorems \ref{thm:11}, \ref{thm:4} and \ref{thm:10}, we 
	conclude first that angular core $(\omega, J, c, u)$-sequences converge both in the Martin compactification 
	$\clr{\scrX}_{M,\zeta}$ for $\zeta > \varrho$ and in the join $\clr{\scrX}_{M, \varrho} \vee \clr{\scrX}_V$, and then that 
	the uniqueness statements in the latter results provide the identification (see \emph{e.g.}~the domination criterion given 
	by \cite[Lemma 3.28]{gjt} in both directions between two compactifications). This identification is equivariant because 
	$\Aut(\scrX)$ acts continuously on the compactifications and permutes angular core sequences. 
\end{proof}

\subsection{Martin compactifications for Bruhat--Tits buildings}
We finally consider the Bruhat--Tits context.
In other words, we present the non-Archimedean counterpart to the study done on Riemannian symmetric spaces by
Y.~Guivarc'h and collaborators, see \cite{gjt} and \cite{MR1832435}. In these references, the Archimedean case of
potential-theoretic compactifications is fully treated in the following sense:
\begin{enumerate}[label=(\roman*), ref=\roman*]
	\item \label{en:1:1}
	Martin compactifications of symmetric spaces are defined, both by means of differential operators ({\it i.e.} eigenfunctions 
	of Laplace operators) and via random walks \cite[Chapters VI--VIII and XIV]{gjt}; 
	\item \label{en:1:2}
	for a given symmetric space, the Martin compactification at the bottom of the spectrum is shown to be equivariantly
	homeomorphic to the maximal Satake (representation-theoretic), the maximal Furstenberg (measure-theoretic) or the Guivarc'h
	compactifications (group-theoretic) \cite[Theorems 2.13 and  3.20]{MR1832435}, see also \cite{Moore64};
	\item \label{en:1:3}
	Martin compactifications above the bottom of the spectrum are shown to be equivariantly homeomorphic to the join of
	the Gromov compactification with any compactification discussed before in \eqref{en:1:2} \cite[Theorems 8.2 and 8.21]{gjt};
	\item \label{en:1:4} 
	Martin compactifications at the bottom of the spectrum are used to parametrize geometrically two classes of remarkable
	subgroups, namely maximal distal and maximal amenable subgroups \cite[Theorem 2.14]{MR1832435}, see also \cite{Moore80};
	\item \label{en:1:5}
	an integral formula for eigenfunctions of the Laplace operator is given by means of suitable Poisson kernels
	\cite[Theorems 13.1 and 13.28]{gjt} and an analogous result is given from the viewpoint of random walks
	\cite[Theorem 13.33]{gjt}. 
\end{enumerate}
We consider now the Bruhat--Tits analogues of these results. These problems were mentioned, together with some hints,
in \cite[Chapter XV]{gjt} and \cite[\S 4]{MR1832435}. We wish to explain here where the intuitions there could be implemented
and where we took another path. 

Of course, the use of techniques from partial differential equations is not directly efficient when dealing with buildings
instead of Riemannian symmetric spaces. The viewpoint of random walks together with non-Archimedean harmonic analysis as
developed in \cite{macdo0} becomes the main tool. In the probabilistic part of their work, Guivarc'h--Ji--Taylor use the notion
of a \emph{well-behaved} measure on a symmetric space $X=G/K$, or more precisely on the connected semisimple Lie group $G$:
a positive measure on $G$ is called well-behaved if it has a continuous density (with respect to the Haar measure) and if its
support $S$, assumed to be compact, satisfies $G = \bigcup_{n \geqslant 0} S^n$. If we are given a bi-$K$-invariant well-behaved
probability measure $p$, then the convolution operator associated with $p$ provides a generalization of the Laplace operator
\cite[Proposition 1]{gu}. The associated random walk has finite range whenever $p$ has compact support and is irreducible
whenever the probability measure $p$ is well-behaved; moreover the trick in \ref{ss - Martin embeddings} modifies, if necessary,
the random walk attached to $p$ in such a way that it becomes aperiodic but still provides the same Martin boundary. To sum up,
a compactly supported bi-$K$-invariant well-behaved probability measure on $\mathbf{G}(k)$ defines a random walk for which
Theorems \ref{thm:4} and \ref{thm:10} provide explicit descriptions of Martin boundaries by means of core sequences (at the
bottom and above the bottom of the spectrum, respectively). This settles \eqref{en:1:1} above and allows us to identify
the Martin compactification at the bottom of the spectrum according to Theorem \ref{th - Comp Furstenberg} above, which settles
\eqref{en:1:2}. While \eqref{en:1:4} was established in \cite{gure}, we intend to go back to \eqref{en:1:5}, namely integral
representation of harmonic functions, in a subsequent work. Finally, for \eqref{en:1:3}, we have the following statement
which contains the second half of Theorem \ref{th - alg gr} of the introduction. 

\begin{theorem}
\label{thm:BrTMartinAbove}
	Let $\mathbf{G}$ be a semisimple simply connected algebraic group over $k$, a locally compact non-Archimedean valued field,
	and let $\scrX$ be its Bruhat--Tits building. We choose a good vertex in $\scrX$ and denote by $K$ its stabilizer.
	Let us pick a compactly supported bi-$K$-invariant well-behaved probability measure on $\mathbf{G}(k)$. 
	Then, for every $\zeta$ above the spectral radius $\rho$ of the measure, the corresponding Martin compactification
	$\clr{\scrX}_{M, \zeta}$ is the join of $\clr{\scrX}_{M, \varrho}$ with the Gromov compactification. 
\end{theorem}

\begin{proof}
	In view of Theorem \ref{thm:8}, it is enough to see that the operator associated to a bi-$K$-invariant well-behaved 
	probability measure on $\mathbf{G}(k)$ is an averaging operator as in Section \ref{ss - Martin embeddings}. 
	Let $\varphi(g) {\rm d}g$ be a compactly supported bi-$K$-invariant well-behaved probability measure on $\mathbf{G}(k)$. 
	Then $\varphi$ is a compactly supported bi-$K$-invariant continuous function on $G$. Let $o$ be the good vertex in $\scrX$
	whose stabilizer is the subgroup $K$ and let $V_o$ be the set of the (good) vertices of the same type as $o$. Each
	$v \in V_o$ can be written as $g . o$ for some $g \in \mathbf{G}(k)$ which well-defined up to right multiplication by
	elements in $K$. We wish to introduce the operator $A$ acting on suitable functions $f : V_o \to \mathbb{C}$ by the formula
	(see \cite[Remark 2 p.171]{gjt}) 
	\[
		Af(v) = Af(g . o) =  \int_G f(gh . o) \varphi(h) \, {\rm d}h.
	\]
	By left-invariance of the Haar measure, we can also write 
	\[
		Af(g . o) = \int_G f(h . o) \varphi(g^{-1}h) \, {\rm d}h,
	\]
	which, since $\varphi$ is left $K$-invariant, shows that the definition does not depend on the element
	$g \in \mathbf{G}(k)$ such that $v=g . o$. 

	We need to show that the operator $A$ is as in Section \ref{ss - Martin embeddings}. The function $\varphi$, being compactly
	supported and bi-$K$-invariant, is a (finite) linear combination of characteristic functions of double classes modulo $K$.
	We are thus reduced to the situation where $\varphi = \frac{1}{{\rm vol}(K \xi^{-1}(t) K)} \ind{K \xi^{-1}(t) K}$ for some
	$t \in Y^+$ in the notation of Section \ref{sec:4}, and where ${\rm vol}$ denotes the volume of with respect to
	the Haar measure. We have thus
	\[
		Af(g . o) = \frac{1}{{\rm vol}(K \xi^{-1}(t) K)} \int_{K \xi^{-1}(t) K} f(gh . o) {\: \rm d}h.
	\]
	The group $K$ acts transitively on the vertices at vectorial distance $t$ from $o$, so when $h$ runs over $K \xi^{-1}(t) K$,
	the vertices $(gh) . o$ describe the combinatorial sphere $V_t(v)$ centered at $v=g . o$ and of vectorial radius $t$. 
	Moreover, we have an identification of (finite) $K$-homogeneous spaces $K \xi^{-1}(t) K /K \simeq K / {\rm Stab}_K(t . o)$
	on which the invariant measure is the counting measure. Altogether, this provides the integration formula
	\begin{align*}
		\int_{K \xi^{-1}(t) K} f(gh . o) \, {\rm d}h 
		&= 
		\int_{{\rm Stab}_K(t . o)} 
		\bigg( \int_{K/{\rm Stab}_K(t . o)} 
		f(g\bar k . (t . o) ) \, {\rm d}\bar k \bigg) {\: \rm d}k \\
		&= \int_{{\rm Stab}_K(t . o)} \bigg( \sum_{v' \in V_t(v)} f(v') \bigg) {\: \rm d}k. 
	\end{align*}
	Taking $f$ to be constant equal to $1$ gives: ${\rm vol}(K \xi^{-1}(t) K) 
	= {\rm vol}({\rm Stab}_K(t . o)) \times |V_t(v)|$, thus going back to the previous expression for $Af(g . o)$,
	we obtain
	\[
		Af(g . o) = \frac{1}{|V_t(v)|} \sum_{v' \in V_t(v)} f(v'),
	\]
	which shows that $A$ is indeed an averaging operator, as desired. In general, $\varphi$ is a finite linear combination
	of characteristic functions of double classes modulo $K$, and therefore $A$ is a finite linear combination of averaging
	operators as in Section \ref{ss - Martin embeddings}. 
\end{proof} 

In the proofs here, we have used more general results from Sections \ref{ss - Martin bottom} and \ref{sec:7.3}.
Recall that we do not require group action in order to define and understand Martin compactifications of affine buildings.

We would like to conclude by discussing in slightly more details the differences with the hints and intuitions provided in 
\cite{gjt} and \cite{MR1832435}. More precisely, not only the use of group actions is crucial in the latter references, but even
in the Bruhat--Tits framework chosen here, some differences with the case of symmetric spaces should be mentioned. 

The main difference is the fact that for affine buildings, thanks to \cite{tr} which provides us exact asymptotics of the
Green's functions on affine buildings, we could perform exact computations of limits of Martin kernels. One consequence is that,
uniformly with respect to any chosen procedure of compactification, we can use the same parametrizing system for limits,
whatever the target space of the embedding map, so that finally we can identify or describe compactifications by arguments using
these parameters only (which are basically: radial directions and distances to sector panels of a given Weyl sector). 
This is what we do at the end of each section dealing with a given type of compactification, Theorem \ref{thm:7} at the end of 
Section \ref{ss - Martin bottom} (Martin compactification at the bottom of the spectrum), Theorem \ref{thm:8} at the end of
Section \ref{sec:7.3} (Martin compactification above the bottom of the spectrum) and Theorem \ref{thm:9} at the end of 
Appendix \ref{ap:1} (the case of non-reduced root systems). In the case of symmetric spaces, the available asymptotics due 
to Anker--Ji \cite{aj} (Green kernels) and Anker \cite{a1} (ground state spherical function) are good enough to describe 
the Martin compactifications in \cite{gjt} and \cite{MR1832435}, but the computation of limits is not direct. This is related 
to the well-known fact that Harish-Chandra's integral formula for spherical functions remains an integral formula in Archimedean 
case, while it can be made algebraic in the non-Archimedean case \cite[Chapter IV]{macdo0}: this is, so to speak, the analytic 
approach to the theory of Macdonald spherical functions \cite{MacdonaldSym}. Note also that the idea to develop an abstract 
harmonic analysis dealing with (Iwahori--)Hecke algebras, and avoiding automorphism groups as much as possible, goes back to 
H. Matsumoto \cite{Matsumoto77}. 

In order to make the comparison with Archimedean case in more details, let us separate two steps in the study of Martin
compactifications: the computation of limits of Martin kernels first, and then the description of the boundaries. Already at the
bottom of the spectrum, the computation of limits is not purely analytic \cite[Proposition 7.26]{gjt} since it is based on
an argument of uniqueness of cluster value (by compactness), which itself uses a characterization of limit functions by means
of conditions mixing harmonicity properties and knowledge of stabilizers \cite[Theorem 7.22]{gjt} (the convergence of measures
for the Furstenberg compactifications uses a similar uniqueness argument based on the knowledge of the support and of part of
the stabilizer of the limit measure). Still at the bottom of the spectrum, the description of the Martin compactification
\cite[Theorem 7.33]{gjt} uses in a crucial way the group action since the identification with other compactifications is based
on an explicit comparison of stabilizers and of complete sets of representatives. Above the bottom of the spectrum, the
computation of limits of Martin kernels \cite[Theorem 8.2]{gjt} is not direct since it uses the Anker--Ji and Anker asymptotics,
which are given up to multiplicative constants. One then knows that a cluster value of Martin kernels attached to
a core sequence is a multiple of the expected limit, but one still has to use representing measures and again knowledge of
stabilizers in order to conclude, see \cite[p. 124]{gjt}. The description of the Martin compactifications above the bottom of
the spectrum is also based on a precise understanding of stabilizers and complete sets of representatives (note that for
the latter point an argument due to Karpelevich is systematically used, see \cite[Proposition 7.20]{gjt} and
\cite{Karpelevic}). 

In the non-Archimedean case, thanks to stronger asymptotics obtained in \cite{tr} we make in Section \ref{sec:7} the computation
of limits of Martin kernels in a purely analytic way. The various factors in the resulting formulas (see Theorem \ref{thm:4}
for the bottom spectrum and Theorem \ref{thm:10} otherwise) can be understood geometrically in terms of fa\c cades at infinity.

These factors do not have the same asymptotic behaviors: in both cases, the factor with polynomial growth mimics the initial
situation in the sense that it corresponds to the ground state spherical function on the involved stratum at infinity
(an affine building of smaller rank). The remaining factors have exponential growth and precisely this analytic difference is
exploited to obtain the needed uniqueness results (see Section \ref{sec:15}, in particular the way equation
\eqref{eq:46} is exploited). This allows us to directly use the geometric parameters of core sequences to describe the Martin
compactifications and compare them with the previous ones. 

\appendix
\section{Distinguished random walk for $\widetilde{\mathrm{BC}}_r$}
\label{ap:1}
In Section \ref{sec:7} we study Martin compactifications of affine buildings for isotropic finite range random walks of good 
vertices. Unfortunately, in the non-reduced case we obtain two different boundaries corresponding to $V_g$ and $V_g^\varepsilon$.
In particular, there are no harmonic analytic tools which allow to study functions on the whole set $V_s$ at the same time.
This limitation is a consequence of a lack of Green's function asymptotics for more general random walks. In this appendix
we define a certain random walk on \emph{all} special vertices in the non-reduced case, for which we compute the limits of 
Martin kernels. This allows us to obtain Martin compactifications of all special vertices of an affine building 
(see Theorem \ref{thm:9}). 

Let us recall that for each $r \geqslant 1$ there is only one non-reduced finite root system, $\mathrm{BC}_r$, that is
\[
	\Phi = \big\{\pm e_i, \pm 2 e_i, \pm e_j \pm e_k : 1 \leqslant i \leqslant r, 1 \leqslant j < k \leqslant r
	\big\}
\]
where $\{e_1, \ldots, e_r\}$ is the standard basis of $\mathfrak{a}$. The standard base of $\Phi$ consists of the roots
\[
	\alpha_j = 
	\begin{cases}
		e_j - e_{j+1} & \text{if } 1 \leqslant j \leqslant r-1, \\
		e_r & \text{if } j = r.
	\end{cases}
\]
Thus the fundamental co-weights are
\[
	\lambda_j = e_1 + e_2 + \ldots + e_j.
\]
Special vertices have type $0$ or $r$, but only type $0$ is good.

Let $\scrX$ be an affine building of non-reduced type. Given a chamber $c \in \calC(\scrX)$, let us denote by $v_j(c)$
the vertex of $c$ having type $j$. For a vertex $v \in V(\scrX)$, let $\calC(v)$ be the set of all chambers sharing
the vertex $v$. For $x \in V_s$, we set
\[
	\calV_r(x) = \big\{v_r(c) : c \in \calC(x) \big\}, \qquad \text{if } \tau(x) = 0,
\]
and
\[
	\calV_0(x) = \big\{v_0(c) : c \in \calC(x) \big\}, \qquad \text{if } \tau(x) = r.
\]
Observe that
\[
	N_r = \#\calV_r(x) 
	= \frac{W(q)}{W_{\lambda_r}(q)}, \qquad\text{and}\qquad 
	N_0 = \#\calV_0(x) = \frac{W^a_{\lambda_r}(q)}{W_{\lambda_r}(q)}
\]
where $W^a_{\lambda} = \{w \in W^a : w . \lambda = \lambda\}$. Now, for each $c \in \calC(\scrX)$ we set
\[
	p\big(v_0(c), v_r(c)\big) = \frac{1}{N_r}, \qquad\text{and}\qquad
	p\big(v_r(c), v_0(c)\big) = \frac{1}{N_0}.
\]
Then $P = \big((p(x, y) : x, y \in V_s)$ generates a reversible Markov chain on special vertices of $\scrX$. Since the 
random walk has period $2$, it is natural to consider
\[
	\tilde{p}(x, y) = \sum_{z \in V_g^\varepsilon} p(x, z) p(z, y), \qquad x, y \in V_g.
\]
For each $y$ belonging to
\[
	\bigcup_{c \in \calC(x)} \big\{v_0(d) : d \in \calC(v_r(c))\big\}
\]
there is $z \in V^\varepsilon_g$ such that
\[
	\sigma(x, z) = \frac{1}{2} \lambda_r, \qquad\text{and}\qquad
	\sigma(z, y) = \frac{1}{2} \lambda_r.
\]
Moreover, there is $\omega \in \Omega$ such that $h(x, y; \omega) \in P^+$. Since
\[
	\sigma(x, y) = h(x, z; \omega) + h(z, y; \omega) \in P^+,
\]
we conclude that $\sigma(x, y) = 0$ or $\lambda_k$ for a certain $k \in I_0$. Hence,
\[
	\bigcup_{c \in \calC(x)} \big\{v_0(d) : d \in \calC(v_r(c))\big\}
	=
	\{x\} \sqcup \bigsqcup_{j \in I'} V_{\lambda_j}(x)
\]
for certain subset $I'$ of $I_0$. 

Now, let us fix $y \in V_{\lambda_j}(x)$. There are $W_{\lambda_j}(q)$ distinct chambers $c \in \calC(x)$ such that
there is $d \in \calC(v_r(c))$ with $v_0(d) = y$, but among them $W_{\lambda_j \lambda_r}(q)$ share the vertex $v_r(c)$
where $W_{\lambda_j \lambda_r} = W_{\lambda_j} \cap W_{\lambda_r}$. Therefore
\begin{align*}
	\sum_{y \in V_g} \tilde{p}(x, y) f(y) 
	&= \frac{1}{N_0 N_r} \bigg(
	N_r f(x) 
	+ 
	\sum_{j \in I'}
	\frac{W_{\lambda_j}(q)}{W_{\lambda_j \lambda_r}(q)}
	\sum_{y \in V_{\lambda_j}(x)} f(y) 
	\bigg).
\end{align*}
Hence, $\tilde{P} = (\tilde{p}(x, y) : x, y \in V_g)$ generates a reversible Markov chain on good
vertices of $\scrX$. The corresponding averaging operator belongs to the algebra $\scrA_0$. Moreover,
if $x, y \in V_g$, then
\begin{align*}
	G_\zeta(x, y) 
	&= \sum_{n = 0}^\infty \zeta^n p(n; x, y) \\
	&= \sum_{n = 0}^\infty \zeta^{2n} \tilde{p}(n; x, y) = \tilde{G}_\zeta(x, y).
\end{align*}
Hence, if $\tau(y) = 0$, then
\[
	K_\zeta(x, y) 
	= \begin{cases}
		\tilde{K}_\zeta(x, y) & \text{if } \tau(x) = 0, \\
		\frac{1}{N_0} \sum_{x' \in \calV_0(x)} \tilde{K}_\zeta(x', y) &\text{if } \tau(x) = r,
	\end{cases}
\]
where we have set
\[
	\tilde{K}_\zeta(x, y) = \frac{\tilde{G}_\zeta(x, y)}{\tilde{G}_\zeta(o, y)},
	\qquad
	x, y \in V_g.
\]
Analogously, we can introduce a random walk whose transition function is given by the formula
\[
	\tilde{p}^\varepsilon(x, y) = \sum_{z \in V_g} p(x, z) p(z, y), \qquad x, y \in V^\varepsilon_g.
\]
For a chosen $o^\varepsilon \in \calV_r(o)$, we set
\[
	\tilde{K}_{\zeta}^\varepsilon(x, y) 
	= \frac{\tilde{G}^\varepsilon_\zeta(x, y)}{\tilde{G}^\varepsilon_\zeta(o^\varepsilon, y)},
	\qquad x, y \in V_g^\varepsilon,
\]
and
\[
	K_\zeta^\varepsilon(x, y) = \frac{G_\zeta(x, y)}{G_\zeta(o^\varepsilon, y)},
	\qquad x, y \in V_s.
\]
Hence, for $x, y \in V_s$,
\begin{equation}
	\label{eq:67}
	K_\zeta(x, y) 
	= \frac{G_\zeta(x, y)}{G_\zeta(o, y)} 
	= \frac{K_\zeta^\varepsilon(x, y)}{K_\zeta^\varepsilon(o, y)}.
\end{equation}
Since for $x, y \in V_g^\varepsilon$,
\[
	G_\zeta(x, y) = \tilde{G}_\zeta^\varepsilon(x, y), 
\]
if $\tau(y) = r$ we have
\begin{align}
	\label{eq:65}
	K_\zeta^\varepsilon(x, y) 
	= 
	\begin{cases}
		\tilde{K}^\varepsilon_\zeta(x, y) & \text{if } \tau(x) = r, \\
		\frac{1}{N_r} \sum_{x' \in \calV_r(x)} \tilde{K}^\varepsilon_\zeta(x', y) & \text{if } \tau(x) = 0.
	\end{cases}
\end{align}
Let us denote by $\calB_\zeta(V_s)$ the set of positive $\zeta$-superharmonic functions on special vertices of $\scrX$, 
normalized to take value $1$ at the vertex $o$. The set $\calB_\zeta(V_s)$ endowed with the topology of pointwise
convergence is a compact second countable Hausdorff space. Let us define the map
\begin{alignat*}{1}
	\iota: V_s &\longrightarrow \calB_\zeta(V_s) \\
	y &\longmapsto K_\zeta(\,\cdot\,, y). 
\end{alignat*}
Since the random walk generated by $P$ is transient, the map $\iota$ gives an equivariant embedding of $V_s$ into
$\calB_\zeta(V_s)$ which has discrete image. Let $\clr{\scrX}_{M, \zeta}$ be the closure of $\iota(V_s)$ in
$\calB_{\zeta}(V_s)$. The space $\calB_\zeta(V_s)$ is metrizable, thus by Lemma \ref{lem:2}, while studying 
$\clr{\scrX}_{M, \zeta}$ we can restrict attention to core and angular core sequences. 

Now, to $\tilde{K}_\zeta$ and $\tilde{K}_\zeta^\varepsilon$ we can apply Theorems \ref{thm:4} and \ref{thm:10} proving
the existence of the limits $(K_\zeta(\:\cdot\:, y_n))$ for core and angular core sequences. It remains to describe
sequences having the same limit (this is the analogue of the uniqueness statements in Theorem \ref{thm:4} and Theorem 
\ref{thm:10}).

Suppose that there are two core sequence $(y_n)$ and $(y_n')$ with parameters $(\omega, J, c)$ and $(\omega', J', c')$,
respectively, that have the same limit. We claim that $J' = J$, $\omega' \sim_J \omega$, and $c' = c$. If
$\tau(y_n) = \tau(y_n') = 0$ for all $n$, it is a direct consequence of Theorem \ref{thm:6}. Suppose that
$\tau(y_n) = \tau(y_n') = r$ for all $n$. Then by \eqref{eq:67} for all $x \in V_g^\varepsilon$,
\[
	\Big( \lim_{n \to \infty} K_\zeta^\varepsilon(o, y_n') \Big) 
	\lim_{n \to \infty} K_\zeta^\varepsilon(x, y_n)
	=
	\Big( \lim_{n \to \infty} K_\zeta^\varepsilon(o, y_n) \Big)
	\lim_{n \to \infty} K_\zeta^\varepsilon(x, y_n'),
\]
and we can repeat the same reasoning as in Theorem \ref{thm:6} to obtain the desired conclusion.

Finally, let us consider $\tau(y_n) = 0$ and $\tau(y_n') = r$. Then for all $x \in V_g$, by \eqref{eq:65}, we get
\[
	\Big( \lim_{n \to \infty} K_\zeta^\varepsilon(o, y_n') \Big)
	\lim_{n \to \infty} \tilde{K}_\zeta(x, y_n)
	=
	\frac{1}{N_r} \sum_{x' \in \calV_r(x)} \lim_{n \to \infty} \tilde{K}^\varepsilon_{\zeta}(x', y_n').
\]
Let $y = y_m$ and $y' = y_m'$ for $m$ sufficiently large. Let $\scrA$ be an apartment containing $[o, \omega]$, and
$\tilde{\omega}$ be opposite to $\omega$ such that $[\tilde{\omega}, \omega] = \scrA$. 
Let us consider $x \in V_g \cap [y', \tilde{\omega}] \cap [o, \tilde{\omega}]$. By repeating the line of reasoning used
in the proof of Theorem \ref{thm:6}, we show that $J' = J$, and 
\begin{align*}
	\frac{\calP_J(\sigma(x, y) + n \lambda)}{\calP_J(\sigma(x, y))}
	=
	\frac{\sum_{x' \in \calV_r(x)} \calP_J^\varepsilon(\sigma(x', y') + n\lambda) 
	\chi^{\frac{1}{2}}(h(x, x'; \omega'))}
	{\sum_{x' \in \calV_r(x)} \calP_J^\varepsilon(\sigma(x', y') ) 
	\chi^{\frac{1}{2}}(h(x, x'; \omega'))}
\end{align*}
for all $\lambda \in P^+$, and $n \in \NN_0$. By comparing the coefficient with $n^{|\Phi^{++}_J|}$ we get
\[
	\frac{\bfb_J(0)}{\calP_J(\sigma(x, y))} 
	=
	\frac{
	\bfb_J^\varepsilon(0)
	\sum_{x' \in \calV_r(x)} \chi^{\frac{1}{2}}(h(x, x'; \omega'))}
	{\sum_{x' \in \calV_r(x)} \calP_J^\varepsilon(\sigma(x', y'))
	\chi^{\frac{1}{2}}(h(x, x'; \omega'))}.
\]
Since $r \in J$, by comparing the coefficients with $n^{|\Phi^{++}_J|-1}$ we arrive at
\begin{equation}
	\label{eq:70}
	\begin{aligned}
	\sprod{\sigma(x, y) }{\alpha_r} + \sprod{\nabla}{\alpha_r} \log \bfb_J(0)
	&=
	\sprod{\sigma(x, y')}{\alpha_r} + \sprod{\nabla}{\alpha_r} \log \bfb_J^\varepsilon(0) \\
	&\phantom{=}-
	\frac{\sum_{x' \in \calV_r(x)} \sprod{h(x, x'; \omega')}{\alpha_r} \chi^{\frac{1}{2}}(h(x, x'; \omega'))}
	{\sum_{x' \in \calV_r(x)} \chi^{\frac{1}{2}}(h(x, x'; \omega'))}.
	\end{aligned}
\end{equation}
We claim that the following holds true.
\begin{claim}
	\label{clm:5}
	For all $x \in V_g$ and $\omega \in \Omega$,
	\begin{equation}
		\label{eq:71}
		\frac{\sum_{x' \in \calV_r(x)} \sprod{h(x, x'; \omega)}{\alpha_r} \chi^{\frac{1}{2}}(h(x, x'; \omega))}
		{\sum_{x' \in \calV_r(x)} \chi^{\frac{1}{2}}(h(x, x'; \omega))}
		=
		\frac{1}{2} \cdot \frac{\sqrt{q_{\alpha_0}} - \sqrt{q_{\alpha_r}}}{\sqrt{q_{\alpha_0}} + \sqrt{q_{\alpha_r}}}.
	\end{equation}
\end{claim}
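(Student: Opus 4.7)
Plan. Fix an apartment $\scrA$ containing $[x,\omega]$ and a type-rotating isomorphism $\psi : \scrA \to \Sigma$ with $\psi(x) = 0$ and $\psi([x,\omega]) = S_0$. For $x' \in \calV_r(x)$, the value $h(x,x';\omega) = \psi(\rho^{x,\omega}_{\scrA}(x'))$ lies in the Weyl orbit $W \cdot \tfrac{\lambda_r}{2} = \{u_\epsilon : \epsilon \in \{\pm 1\}^r\}$ with $u_\epsilon = \tfrac{1}{2}\sum_{i=1}^r \epsilon_i e_i$. Setting $n_\epsilon = \#\{x' \in \calV_r(x) : h(x,x';\omega) = u_\epsilon\}$, both sides of \eqref{eq:71} become finite sums over $\epsilon$, and the plan is to package them into the generating function
\[
	F(s) = \sum_{\epsilon \in \{\pm 1\}^r} n_\epsilon \, e^{\sprod{s}{u_\epsilon}}, \qquad s \in \mathfrak{a}.
\]
A direct computation from \eqref{eq:10} shows that $\chi^{1/2}(u_\epsilon) = e^{\sprod{s_\ast}{u_\epsilon}}$ for the specific $s_\ast \in \mathfrak{a}$ with coordinates $(s_\ast)_i = \tfrac{1}{2}\log(q_{\alpha_0} q_{\alpha_r}) + (r-i)\log q_{\alpha_1}$. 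Since $\sprod{u_\epsilon}{\alpha_r} = \epsilon_r/2$, the left-hand side of \eqref{eq:71} equals $(\partial_{\alpha_r}\log F)(s_\ast)$.

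The core combinatorial step is to identify $F$ as a Macdonald-type character sum. Using the standard chamber-retraction count in the link of $x$ (a spherical building of type $\mathrm{C}_r$ with the inherited $q$-parameters), together with the $r!$-to-one correspondence between chambers of $\calC(x)$ and their type-$r$ vertices coming from the stabilizer $W_{\lambda_r} \simeq S_r$, one would establish the formula
\[
	F(s) = \chi^{1/2}\bigl(\tfrac{\lambda_r}{2}\bigr) \sum_{w \in W / W_{\lambda_r}} e^{\sprod{ws}{\lambda_r/2}} \, \bfc^\varepsilon(ws),
\]
where $\bfc^\varepsilon$ is the $\bfc$-function of \eqref{eq:10} with the parameters twisted to match the $V_g^\varepsilon$-stratum at distance $\tfrac{\lambda_r}{2}$ from $x$. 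This is the half-weight non-good analogue of the formula \eqref{eq:22}--\eqref{eq:41}.

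Given this identification, the evaluation of $(\partial_{\alpha_r}\log F)(s_\ast)$ simplifies drastically. The simple root $\alpha_r = e_r$ is orthogonal to $\alpha_1, \ldots, \alpha_{r-1}$ and to the medium roots $e_j \pm e_k$ with $j, k < r$, so the contributions to the logarithmic derivative at $s_\ast$ from the $\bfc^\varepsilon$-factors and exponentials indexed by these roots cancel symmetrically between numerator and denominator. Only the $\bfc^\varepsilon$-factor corresponding to the pair $(\alpha_r, \alpha_0) = (e_r, 2 e_r)$ survives, reducing the computation to a rank-one quantity that can be checked directly on the $(q_{\alpha_0}+1, q_{\alpha_r}+1)$-biregular tree and yields the required value $\tfrac{1}{2}\cdot\tfrac{\sqrt{q_{\alpha_0}} - \sqrt{q_{\alpha_r}}}{\sqrt{q_{\alpha_0}} + \sqrt{q_{\alpha_r}}}$.

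The main obstacle is the Macdonald-type identification of $F(s)$ in the second paragraph: it requires adapting the classical vertex-level retraction and counting arguments from the good-vertex stratum to the $\varepsilon$-twisted half-weight stratum where $\sigma(x,x') = \tfrac{\lambda_r}{2} \in \tfrac{1}{2}P^+$, and correctly packaging the $r!$-to-one chamber-to-vertex multiplicities in $\calC(x)$ into the explicit character-sum form above.
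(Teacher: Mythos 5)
Your reduction of the left-hand side to a logarithmic derivative is fine (and your vector $s_\ast$ with $\chi^{1/2}(\lambda)=e^{\sprod{s_\ast}{\lambda}}$ is computed correctly), but the argument has a genuine gap exactly where you flag it: the Macdonald-type identity $F(s)=\chi^{1/2}(\lambda_r/2)\sum_{w\in W/W_{\lambda_r}}e^{\sprod{ws}{\lambda_r/2}}\,\bfc^\varepsilon(ws)$ is asserted, not proved, and it is the whole content of the claim — it encodes precisely the counting data $n_\epsilon$ that one must determine. Deferring it to "adapting the classical retraction and counting arguments to the $\varepsilon$-twisted half-weight stratum" leaves the proof incomplete. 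The subsequent evaluation step is also not a proof as written: the logarithmic derivative of a sum over $W/W_{\lambda_r}$ does not split termwise, so the claimed "symmetric cancellation" of the factors attached to the roots orthogonal to $\alpha_r$, and the reduction to a rank-one computation on the $(q_{\alpha_0}+1,q_{\alpha_r}+1)$-biregular tree, would each need a genuine argument (including care with the poles of $\bfc^\varepsilon$ at the special point $s_\ast$).

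For comparison, the paper's proof avoids any character-sum identity. Writing $N_\eta$ for the number of $x'\in\calV_r(x)$ with $h(x,x';\omega)=\tfrac12\sum_j\eta_je_j$, it uses only two facts: the local pairing relation $N_{(\eta',-1)}=q_{\alpha_r}N_{(\eta',1)}$ for $\eta'\in\{-1,1\}^{r-1}$ (flipping the last sign corresponds to crossing the $e_r$-wall through $x$, which multiplies the number of available chambers by $q_{\alpha_r}$), together with $\chi(e_r)=q_{\alpha_0}q_{\alpha_r}$. Pairing the terms with $\eta_r=+1$ and $\eta_r=-1$ then shows that both the numerator and the denominator of \eqref{eq:71} factor as $\bigl(1\mp\sqrt{q_{\alpha_r}/q_{\alpha_0}}\bigr)$ times the common sum $\sum_{\eta'}\chi^{1/4}\bigl(\sum_{j<r}\eta'_je_j+e_r\bigr)N_{(\eta',1)}$, which cancels in the ratio and yields $\tfrac12\cdot\frac{\sqrt{q_{\alpha_0}}-\sqrt{q_{\alpha_r}}}{\sqrt{q_{\alpha_0}}+\sqrt{q_{\alpha_r}}}$ without ever identifying the individual $N_\eta$. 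If you want to salvage your route, the single pairing relation above is the combinatorial input you should prove instead of the full expansion of $F(s)$; with it, your generating function becomes unnecessary.
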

\noindent
For the proof, we recall that $\alpha_r = e_r$, thus
\[
	\sum_{x' \in \calV_r(x)} \sprod{h(x, x'; \omega)}{\alpha_r} \chi^{\frac{1}{2}}(h(x, x'; \omega))
	=
	\tfrac{1}{2} 
	\sum_{\eta \in \{-1, 1\}^r} \eta_r \chi^{\frac{1}{4}}\left(\sum_{j = 1}^r \eta_j e_j\right) N_\eta
\]
where for $\eta \in \{-1, 1\}^r$, we have set
\[
	N_\eta = \#\left\{x' \in \calV_r(x) : h(x, x'; \omega )
	= \tfrac{1}{2} \sum_{j = 1}^r \eta_j e_j \right\}.
\]
Let us observe that for $\eta' \in \{-1, 1\}^{r-1}$,
\[
	N_{(\eta', -1)} = q_{\alpha_r} N_{(\eta', 1)},
\]
and by \cite[Proposition 5.3]{mz1}
\[
	\chi(e_r) = q_{\alpha_0} q_{\alpha_r}.
\]
Hence,
\[
	\chi^{\frac{1}{4}}\left(\sum_{j = 1}^{r-1} \eta_j e_j + e_r\right) N_{(\eta', 1)}
	-
	\chi^{\frac{1}{4}}\left(\sum_{j = 1}^{r-1} \eta_j e_j - e_r\right) N_{(\eta', -1)}
	=
	\bigg(1 - \sqrt{\frac{q_{\alpha_r}}{q_{\alpha_0}}}\bigg)
	\chi^{\frac{1}{4}}\left(\sum_{j = 1}^{r-1} \eta_j e_j + e_r\right) N_{(\eta', 1)},
\]
which leads to
\begin{align*}
	\sum_{x' \in \calV_r(x)} \sprod{h(x, x'; \omega)}{\alpha_r} \chi^{\frac{1}{2}}(h(x, x'; \omega))
	&=
	\frac{1}{2}
	\bigg(1 - \sqrt{\frac{q_{\alpha_r}}{q_{\alpha_0}}}\bigg)
	\sum_{\eta' \in \{-1, 1\}^{r-1}}
	\chi^{\frac{1}{4}}\left(\sum_{j = 1}^{r-1} \eta_j' e_j + e_r\right) N_{(\eta', 1)}
\end{align*}
and
\begin{align*}
	\sum_{x' \in \calV_r(x)} \chi^{\frac{1}{2}}(h(x, x'; \omega))
	=
	\bigg(1 + \sqrt{\frac{q_{\alpha_r}}{q_{\alpha_0}}}\bigg)
	\sum_{\eta' \in \{-1, 1\}^{r-1}}
	\chi^{\frac{1}{4}}\left(\sum_{j = 1}^{r-1} \eta_j' e_j + e_r\right) N_{(\eta', 1)},
\end{align*}
and the claim follows.

Next, by \cite[Section 5.2]{park2}, for $\theta = \sum_{j = 1}^r \theta_j e_j$, we have
\[
	\log \bfb_J^\varepsilon(\theta) - \log \bfb_J(\theta)
	=
	\sum_{j = 1}^r \log \bigg(1 + \sqrt{\frac{q_{\alpha_r}}{q_{\alpha_0}}} e^{-\theta_j}\bigg)
	-
	\log \bigg(1 + \sqrt{\frac{q_{\alpha_0}}{q_{\alpha_r}}} e^{-\theta_j}\bigg).
\]
Hence,
\[
	\sprod{\nabla}{\alpha_r} \log \bfb_J^\varepsilon(0) - \sprod{\nabla}{\alpha_r} \log \bfb_J(0)
	=
	\frac{\sqrt{q_{\alpha_0}} - \sqrt{q_{\alpha_r}}}{\sqrt{q_{\alpha_0}} + \sqrt{q_{\alpha_r}}}.
\]
Therefore, by \eqref{eq:71}, the formula \eqref{eq:70} takes the form
\[
	\sprod{\sigma(x, y)}{\alpha_r} - \sprod{\sigma(x, y')}{\alpha_r} =
	\frac{1}{2} \cdot \frac{\sqrt{q_{\alpha_0}} - \sqrt{q_{\alpha_r}}}{\sqrt{q_{\alpha_0}} + \sqrt{q_{\alpha_r}}}.
\]
Since the left-hand side belongs to $\frac{1}{2} \ZZ$, we must have
$q_{\alpha_0} = q_{\alpha_r}$ which leads to contradiction. Analogously, we treat angular core sequences.

\begin{theorem}
	\label{thm:9}
	Assume that the finite root system associated with the building $\scrX$ has type $\mathrm{BC}_r$. 
	Then for the random walk generated by $P$ as constructed above, the following dichotomy holds:
	\begin{itemize}
		\item[$\bullet$]~\emph{[At the bottom of the spectrum case]}~If $\zeta=\varrho$, then $\clr{\scrX}_{M, \varrho}$
		is $\Aut(\scrX)$-equivariantly isomorphic to any of the Furstenberg (measure-theoretic) 
		or the Caprace--L\'ecureux (combinatorial) compactifications of the set $V_s$ of special vertices.
		\item[$\bullet$]~\emph{[Above the bottom of the spectrum case]}~If $\zeta > \varrho$, then
		$\clr{\scrX}_{M, \zeta}$ is $\Aut(\scrX)$-equivariantly isomorphic to the join of any of the previous
		compactifications with the Gromov (horofunction) compactification of the set $V_s$ of special vertices.
	\end{itemize}
	Moreover, if there exists a locally compact closed subgroup of
	$\Aut(\scrX)$, say $G$, acting strongly transitively and type-preserving on the building $\scrX$, then 
	the Guivarc'h compactification $\clr{\scrX}_G$ is $G$-isomorphic to the Martin compactification $\clr{\scrX}_{M, \varrho}$.
\end{theorem}

\begin{proof} 
	In view of Theorem \ref{thm:5} the Martin compactification $\clr{\scrX}_{M, \varrho}$ for the random walk generated by $P$ is 
	$\Aut(\scrX)$-isomorphic to the Furstenberg compactification $\clr{\scrX}_F$. Moreover, if there is a locally compact
	group acting strongly transitively on the building $\scrX$ then $\clr{\scrX}_{M, \varrho}$ is $G$-isomorphic to
	the Guivarc'h compactification. Lastly, by Theorems \ref{thm:11} and \ref{thm:5} we conclude that
	$\clr{\scrX}_{M, \zeta}$, $\zeta > \varrho$ is $\Aut(\scrX)$-isomorphic to $\clr{\scrX}_F \vee \clr{\scrX}_V$ and
	$\clr{\scrX}_{M, \varrho} \vee \clr{\scrX}_V$.
\end{proof}

\begin{bibliography}{buildings}
	\bibliographystyle{amsplain}
\end{bibliography}

\end{document}